\swapnumbers \theoremstyle{plain}
\newtheorem{thm}{Theorem}[section]
\newtheorem{lem}[thm]{Lemma}
\newtheorem{lem-defn}[thm]{Lemma and definition}
\newtheorem{cor}[thm]{Corollary}
\newtheorem{prop}[thm]{Proposition}
\theoremstyle{definition}\newtheorem{rem}[thm]{Remark}
\newtheorem{defn}[thm]{Definition}
\theoremstyle{definition}\newtheorem{leer}[thm]{}
\newcommand{\Cal}{\mathcal}
\newcommand{\R}{\mathbb{R}}
\newcommand{\C}{\mathbb{C}}
\newcommand{\D}{\mathbb{D}}
\newcommand{\N}{\mathbb{N}}
\newcommand{\Z}{\mathbb{Z}}
\newcommand{\E}{\mathbb{E}}
\newcommand{\B}{\mathbb{B}}
\newcommand{\s}{\mathbb{S}}
\DeclareMathOperator{\supp}{supp} \DeclareMathOperator{\im}{Im}
\DeclareMathOperator{\ke}{Ker} \DeclareMathOperator{\rea}{Re}
 \numberwithin{equation}{section}
\title {Local and global similarity of holomorphic matrices}
\author{J\"urgen Leiterer}
\address{Institut f\"ur Mathematik \\
Humboldt-Universit\"at zu Berlin \\Rudower Chaussee 25\\D-12489 Berlin , Germany}
\email{leiterer@mathematik.hu-berlin.de}
\date{}
\begin{document}

%\begin{abstract}
%\end{abstract}

\maketitle

\section{Introduction}

Let $X$ be a  (reduced) complex space, e.g., a complex manifold or an analytic subset of a complex manifold. Denote by
 $\mathrm{Mat}(n\times n,\C)$ the algebra of complex $n\times n$ matrices, and by $\mathrm{GL}(n,\C)$  the group of all invertible complex $n\times n$ matrices.

\begin{defn}\label{7.6.16-} Two holomorphic maps $A,B:X\to \mathrm{Mat}(n\times n,\C)$ are called
(globally) {\bf  holomorphically  similar on $ X$} if there is a holomorphic  map $H:X\to\mathrm{GL}(n,\C)$ with $B=H^{-1}AH$ on $X$.
They are called
{\bf locally holomorphically similar at a point $\xi\in X$}  if there is a neighborhood $U$ of $\xi$ such that $A\vert_U$ and $B\vert_U$ are holomorphically similar on $U$.
Correspondingly we define {\bf  continuous} and {\bf $\Cal C^k$ similarity}.
\end{defn}

\begin{thm}\label{28.6.16*}
Let $X$ be a  one-dimensional Stein space, and let $A,B:X\to \mathrm{Mat}(n\times n,\C)$ be two holomorphic maps, which  are locally holomorphically similar at each point of $X$. Then they are globally holomorphically similar on $X$.
\end{thm}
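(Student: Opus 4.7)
The plan is to recast the problem as a nonabelian Cousin~I problem in the sheaf of centralizers of $A$ and to solve it using the Stein geometry of a one-dimensional base.

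First, using the local similarity hypothesis, I would cover $X$ by Stein open sets $(U_i)$ equipped with holomorphic conjugators $H_i\colon U_i\to \mathrm{GL}(n,\C)$, $H_i^{-1}AH_i = B$. On $U_i\cap U_j$ the matrix $G_{ij} := H_i H_j^{-1}$ commutes with $A$, and $(G_{ij})$ is a holomorphic $1$-cocycle with values in the sheaf $\mathcal{G}$ whose sections over $U$ are holomorphic maps $U\to \mathrm{GL}(n,\C)$ commuting with $A|_U$. A cocycle splitting $G_{ij} = G_iG_j^{-1}$ with $G_i\in\mathcal{G}(U_i)$ would yield a global conjugator $H$ via $H|_{U_i} = G_i^{-1}H_i$, since the $G_i$ commute with $A$.

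The associated Lie-algebra sheaf $\mathfrak{a} := \ke\bigl([A,\,\cdot\,]\colon \mathcal{O}_X^{n\times n}\to \mathcal{O}_X^{n\times n}\bigr)$ of holomorphic matrices commuting with $A$ is coherent, as the kernel of an $\mathcal{O}_X$-linear sheaf morphism between coherent sheaves. Cartan's Theorem~B on the Stein space $X$ gives $H^1(X,\mathfrak{a}) = 0$, which solves the additive (Lie-algebra level) Cousin problem inside the centralizer. The crucial step is to bootstrap this to the multiplicative/nonabelian Cousin problem in $\mathcal{G}$.

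To carry out that bootstrap I would exhaust $X$ by a Runge sequence $X_1\Subset X_2\Subset\cdots$ of relatively compact Stein open subsets (available because $\dim X = 1$), and inductively construct conjugators $\widetilde H_k\in \mathrm{GL}(n,\mathcal{O}(X_k))$, each a sufficiently good uniform approximation of $\widetilde H_{k-1}$ on $X_{k-1}$, so that the sequence converges on compacta to a global $H\in\mathrm{GL}(n,\mathcal{O}(X))$ conjugating $A$ to $B$. Each inductive step reduces to a local Cartan-type splitting lemma: a $\mathcal{G}$-valued holomorphic map on a compact intersection of two Stein pieces, sufficiently close to the identity, admits a multiplicative decomposition $G = G_1G_2^{-1}$ with $G_i$ holomorphic on the respective pieces and commuting with $A$. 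Proving this splitting with uniform norm control---by combining the additive Cousin splitting in $\mathfrak{a}$ with Runge approximation inside the centralizer and a Newton-type contraction iteration---is where the main analytic work concentrates and is what I expect to be the principal obstacle.
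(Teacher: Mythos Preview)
Your reformulation as a cocycle problem in the centralizer sheaf $\mathcal G=\mathcal O^{\mathrm{GCom}\,A}$ is correct and is exactly how the paper begins. But the obstacle you name is not the real one. Splitting a $\mathcal G$-cocycle that is already \emph{close to the identity} is indeed feasible: coherence of $\mathfrak a$ gives the additive Cousin splitting with bounds, and a Newton iteration closes. The genuine gap is upstream: you never explain how the induction reaches the close-to-identity regime. At the step $X_k\to X_{k+1}$ you face on the new overlap a factor $\widetilde H_kH_i^{-1}\in\mathcal G$ that is in general far from $I$. To normalize it you would need either a Runge approximation \emph{inside $\mathcal G$} (approximate $G\in\mathcal G(X_k)$ by some $\widetilde G\in\mathcal G(X_{k+1})$ that is invertible on all of $X_{k+1}$, not merely a section of $\mathfrak a$) or a large-data holomorphic factorization in $\mathcal G$ over a Cartan pair. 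Neither is available a priori: the fibers $\mathrm{GCom}\,A(\zeta)$ are not locally constant---the paper exhibits a $2\times 2$ example where even $\pi_1$ of the fiber jumps at a point---so $\mathcal G$ is not a locally trivial group bundle and the standard machinery does not apply. Taking a holomorphic logarithm of $G$ to reduce to $\mathfrak a$ does not help either, since a matrix logarithm of a section commuting with $A$ need not itself commute with $A$.

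The paper circumvents this by invoking the Forster--Ramspott Oka principle for coherent $\mathcal O$-subsheaves of $\mathcal O^{\mathrm{GL}(n,\C)}$: an $\mathcal O^{\mathrm{GCom}\,A}$-cocycle is holomorphically trivial as soon as it is trivial in a certain sheaf $\widehat{\mathcal C}^{\mathrm{GCom}\,A}$ of continuous sections. The substantive work is then topological rather than analytic. One shows that the set of points that are not \emph{Jordan stable} for $A$ is discrete; away from it, $\mathrm{GCom}\,A$ \emph{is} a locally trivial bundle of connected complex Lie groups. A bump exhaustion of the (normalized) Riemann surface, together with a partition-of-unity type continuous splitting on each bump---which works precisely because the fiber groups $\mathrm{GCom}\,\Phi$ are connected---then yields $H^1(X,\widehat{\mathcal C}^{\mathrm{GCom}\,A})=0$. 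The connectivity of $\mathrm{GCom}\,\Phi$ and the Jordan-stability analysis are exactly the ingredients that allow one to split \emph{large} continuous cocycles; this is what your direct holomorphic scheme is missing.
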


If $X$ is a non-compact connected Riemann surface, this was proved by
R. Guralnick \cite{Gu}. Actually, Guralnick proves a more general theorem for matrices with elements in some Bezout rings, and then  applies this to the ring  of holomorphic functions on a non-compact connected Riemann surface. The ring of holomorphic functions on an arbitrary (non-smooth) one-dimensional Stein space is not  Bezout. So it seems  that this proof cannot be directly generalized to the non-smooth case.

We give a new proof which works also in the non-smooth cases. Moreover, this approach  applies  to higher dimensions, where we get the following.

\begin{thm}\label{24.8.16+}Let $X$ be a Stein space and let $A,B:X\to \mathrm{Mat}(n\times n,\C)$ be two  holomorphic maps, which are globally $\Cal C^\infty$ similar on $X$.
Then they are globally holomorphically similar on $X$.
\end{thm}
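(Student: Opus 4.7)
Consider the coherent analytic sheaf $\mathcal{F}\subset\mathcal{O}_X^{n\times n}$ of germs $S$ satisfying $AS=SB$, and its $\mathcal{C}^\infty$ analogue $\mathcal{F}^\infty\subset\mathcal{E}_X^{n\times n}$. The given $\mathcal{C}^\infty$ similarity $H$ is a global section of $\mathcal{F}^\infty$ with $H(x)\in\mathrm{GL}(n,\mathbb{C})$ everywhere. Since $X$ is Stein, Cartan's Theorem~A gives global holomorphic sections $S_1,\ldots,S_N\in\Gamma(X,\mathcal{F})$ generating every stalk, and by the flatness of $\mathcal{E}_X$ over $\mathcal{O}_X$ they also generate $\mathcal{F}^\infty$. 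Locally near a point $\xi\in X$ one writes $H=\sum_i f_iS_i$ with $f_i$ smooth. ``Freezing coefficients'' yields the global holomorphic section $\tilde H_\xi:=\sum_i f_i(\xi)\,S_i\in\Gamma(X,\mathcal{F})$, which satisfies $\tilde H_\xi(\xi)=H(\xi)\in\mathrm{GL}(n,\mathbb{C})$ and is therefore invertible on some neighborhood $U_\xi$ of $\xi$. Consequently $A$ and $B$ are locally holomorphically similar at every point of $X$, even by restrictions of global holomorphic sections of $\mathcal{F}$.

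\textbf{Globalization via a Cousin step.} The local similarities $\tilde H_\xi$ define a \v{C}ech $1$-cocycle $h_{\xi\eta}:=\tilde H_\xi^{-1}\tilde H_\eta$ in the sheaf of groups $\mathcal{G}(B)$ of invertible holomorphic matrices commuting with $B$. The given $H$ provides a $\mathcal{C}^\infty$ splitting $g_\xi:=H^{-1}\tilde H_\xi$ of this cocycle, showing its class in $\check H^1(X,\mathcal{G}(B)^\infty)$ is trivial. To finish, one must upgrade this triviality to $\check H^1(X,\mathcal{G}(B))$, i.e.\ find holomorphic $\gamma_\xi\in\mathcal{G}(B)(U_\xi)$ with $\gamma_\xi^{-1}\gamma_\eta=h_{\xi\eta}$; then $\tilde H:=\tilde H_\xi\gamma_\xi^{-1}$ is a well-defined global holomorphic similarity.

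\textbf{Main obstacle and expected resolution.} The essential difficulty is that $\mathcal{G}(B)$ is not in general the sheaf of local sections of a locally trivial holomorphic principal bundle with complex Lie group fiber: the pointwise centralizer $\mathcal{Z}(B(x))$ has dimension that jumps across the analytic stratification of $X$ by Jordan type of $B$. Consequently Grauert's Oka principle does not apply off the shelf. I expect the argument to proceed instead by a direct $\bar\partial$-approach: since $A$ and $B$ are holomorphic, $\bar\partial H$ is a $(0,1)$-form with values in $\mathcal{F}$ (because $A\,\bar\partial H=(\bar\partial H)\,B$), so Cartan's Theorem~B and Dolbeault produce $K\in\Gamma(X,\mathcal{F}^\infty)$ with $\bar\partial K=\bar\partial H$, whence $\tilde H:=H-K\in\Gamma(X,\mathcal{F})$ is automatically holomorphic and satisfies $A\tilde H=\tilde H B$. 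The crux is then to refine this quantitatively---e.g.\ via H\"ormander-type weighted $L^2$-estimates, together with an exhaustion of $X$ by relatively compact Stein subdomains and Runge-type approximation---to produce such a $K$ pointwise so small relative to $\|H(x)^{-1}\|^{-1}$ that $\tilde H$ remains everywhere invertible. Controlling this approximation uniformly on a possibly singular Stein space, while preserving the defining condition of $\mathcal{F}$, is what I expect to be the technical heart of the proof.
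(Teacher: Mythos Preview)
Your setup and cocycle reduction are correct and match the paper's approach almost exactly: you get local holomorphic similarities $\tilde H_\xi$, form the cocycle $h_{\xi\eta}=\tilde H_\xi^{-1}\tilde H_\eta$ in the sheaf $\mathcal G(B)$ of holomorphic invertible matrices commuting with $B$ (the paper uses $\mathrm{GCom}\,A$, which is conjugate), and observe that the global $C^\infty$ similarity $H$ furnishes a $C^\infty$ splitting. Up to this point you and the paper agree.

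The gap is in what you do next. You assert that because $\mathcal G(B)$ is not locally trivial, ``Grauert's Oka principle does not apply off the shelf,'' and you pivot to a speculative $\bar\partial$/$L^2$ scheme whose key step---making the correction $K$ pointwise small enough to preserve invertibility on all of a possibly singular Stein space---you leave unresolved. But an Oka principle \emph{does} apply off the shelf here: the Forster--Ramspott theorem (\cite{FR1}, Satz~1), stated in the paper as Proposition~4.6. The sheaf $\mathcal O^{\mathrm{GCom}\,A}$ is a \emph{coherent $\mathcal O$-subsheaf} of $\mathcal O^{\mathrm{GL}(n,\mathbb C)}$ in the sense of \cite{FR1}, with generating Lie-algebra sheaf $\mathcal O^{\mathrm{Com}\,A}$; no local triviality is required. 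The relevant continuous companion is not $\mathcal C^{\mathrm{GCom}\,A}$ but the subsheaf $\widehat{\mathcal C}^{\mathrm{GCom}\,A}$ of continuous sections whose \emph{value} at each point can be attained by a local holomorphic section---and your own ``freezing coefficients'' observation (equivalently, Spallek's criterion) shows that $C^\infty$ sections belong to this subsheaf. Forster--Ramspott then gives directly that a $\widehat{\mathcal C}^{\mathrm{GCom}\,A}$-trivial $\mathcal O^{\mathrm{GCom}\,A}$-cocycle is $\mathcal O^{\mathrm{GCom}\,A}$-trivial, and the proof is complete.

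So the missing idea is precisely the invocation of Forster--Ramspott in place of an ad hoc $\bar\partial$ argument; with it, the ``technical heart'' you anticipate simply does not arise.
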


We show by an example that $\Cal C^\infty$ cannot be replaced by $\Cal C^k$ with $k<\infty$ (Theorem \ref{10.1.16'}), even if we additional assume that $A$ and $B$ are locally holomorhically similar at each point of $X$ -- under the hypotheses of Theorem \ref{24.8.16+}, this follows from Spallek's criterion  (Theorem \ref{28.6.16**}, condition (iii)).

There are different criteria for local holomorphic similarity, which are known or can be easily obtained from (partially, non-easy) known results. They are contained  in the following theorem (if the there given matrix $\Phi$ is invertible).

\begin{thm}\label{28.6.16**} Let $X$ be a complex space,  $A,B:X\to \mathrm{Mat}(n\times n,\C)$  holomorphic, $\xi\in X$ and $\Phi\in \mathrm{Mat}(n\times n,\C)$ such that $\Phi B(\xi)=A(\xi)\Phi$.
Suppose at least one of the following conditions is satisfied.
\begin{itemize}
\item[(i)] {\em (Wasow's criterion)} The dimension of the complex vector space
\begin{equation}\label{30.6.16}
\Big\{\Theta\in \mathrm{Mat}(n\times n,\C)\;\Big\vert\;\Theta B(\zeta)=A(\zeta)\Theta\Big\}
\end{equation}is constant for $\zeta$ in some neighborhood of $\xi$.
\item[(ii)]  {\em(Smith's criterion)} $X$ is a Riemann surface,  and there exist a neighborhood $V$ of $\xi$ and a continuous map $C:X\to \mathrm{GL}(n,\C)$ with
$C(\xi)=\Phi$ and $CB=AC$ on $V$.
\item[(iii)] {\em (Spallek's criterion)} There exist a neighborhood $V$ of $\xi$ and a $\Cal C^\infty$ map $T:V\to \mathrm{Mat}(n\times n,\C)$ such that
$T(\xi)=\Phi$ and $TB=AT$ on $V$.
\end{itemize}
Then there exist a neighborhood $U$ of $\xi$ and a holomorphic map $H:U\to \mathrm{Mat}(n\times n,\C)$ such that $H(\xi)=\Phi$ and $HB=AH$ on $U$.
\end{thm}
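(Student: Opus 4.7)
The plan is to dispose of the three criteria in turn, via a common framework. Let $\Cal L\colon \Cal O_X^{n^2}\to \Cal O_X^{n^2}$ be the holomorphic $\Cal O_X$-linear morphism given by $\Cal L(\zeta)(\Theta) := A(\zeta)\Theta - \Theta B(\zeta)$, and let $\Cal K := \ke \Cal L$, a coherent subsheaf of $\Cal O_X^{n^2}$. The task is to lift $\Phi \in \ke \Cal L(\xi)$ to a germ of a holomorphic section of $\Cal K$ at $\xi$. For case (i), the constancy of $\dim \ke \Cal L(\zeta)$ near $\xi$ means that $\Cal L$ has locally constant rank, whence $\Cal K$ is a holomorphic subbundle of the trivial bundle in a neighborhood of $\xi$. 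A local holomorphic frame $e_1,\dots,e_r$ of $\Cal K$ is then simultaneously a pointwise basis of $\ke \Cal L(\zeta)$, and writing $\Phi = \sum c_i\, e_i(\xi)$ with $c_i\in\C$, the map $H(\zeta) := \sum c_i\, e_i(\zeta)$ does the job.

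For case (ii), $X$ being a Riemann surface makes $\Cal O_\xi$ a discrete valuation ring with local parameter $z$. The elementary-divisor theorem applied to the submodule $\Cal K_\xi \subseteq \Cal O_\xi^{n^2}$ yields a basis $e_1,\dots,e_{n^2}$ of $\Cal O_\xi^{n^2}$ and exponents $k_1,\dots,k_r \ge 0$ (where $r := \rank_{\Cal O_\xi} \Cal K_\xi$) such that $z^{k_1} e_1, \dots, z^{k_r} e_r$ is an $\Cal O_\xi$-basis of $\Cal K_\xi$. The key observation is that for $\zeta$ in a punctured neighborhood of $\xi$ one has $\ke \Cal L(\zeta) = \mathrm{span}\{e_1(\zeta),\dots,e_r(\zeta)\}$, so the holomorphic map $\zeta \mapsto \Cal L(\zeta) e_i(\zeta)$ vanishes off $\xi$ and hence on a full neighborhood; thus each $e_i$ is itself a holomorphic solution germ, which forces $k_i = 0$, making $\Cal K$ a holomorphic subbundle near $\xi$ with frame $e_1,\dots,e_r$. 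Now decompose the given continuous $C$ (with $CB=AC$ on $V$ and $C(\xi) = \Phi$) in the holomorphic frame $e_1,\dots,e_{n^2}$: $C(\zeta) = \sum_{i=1}^{n^2} c_i(\zeta)\, e_i(\zeta)$ with $c_i$ continuous. Since $\dim \ke \Cal L(\zeta) = r$ on a dense punctured neighborhood of $\xi$ (upper semicontinuity of the kernel dimension combined with $\dim X = 1$), we have $C(\zeta) \in \mathrm{span}\{e_1(\zeta),\dots,e_r(\zeta)\}$ there, forcing $c_i(\zeta) = 0$ for $i > r$; continuity at $\xi$ gives $c_i(\xi) = 0$ for $i > r$. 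Hence $H(\zeta) := \sum_{i=1}^r c_i(\xi)\, e_i(\zeta)$ is the required holomorphic extension.

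For case (iii), one invokes Spallek's approximation theorem: a $\Cal C^\infty$ solution $T$ of the holomorphic linear equation $\Cal L(\zeta)T(\zeta) = 0$ on a neighborhood of $\xi$ can be replaced by a holomorphic solution $H$ on a possibly smaller neighborhood with $H(\xi) = T(\xi) = \Phi$. The main obstacle is precisely this case, which rests on Spallek's genuinely nontrivial theorem on smooth-versus-holomorphic approximation on reduced complex spaces; cases (i) and (ii) reduce to linear algebra over $\Cal O_X$, with (ii) crucially using that $\Cal O_\xi$ is a DVR so that submodules of free modules admit an elementary-divisor form.
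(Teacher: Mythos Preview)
Your proof is correct and follows the paper's approach in all three cases: both set up the linear map $\Cal L(\zeta)\Theta=A(\zeta)\Theta-\Theta B(\zeta)$, handle (i) via the constant-rank-implies-subbundle lemma (the paper's Lemma~\ref{9.12.15}), and dispose of (iii) by direct appeal to Spallek's theorem (Lemma~\ref{8.12.15'}). For (ii) the paper packages the elementary-divisor computation as a holomorphic projection $P$ obtained from the Smith factorization (Lemma~\ref{23.6.16''}), with $\im P(\zeta)=\ke\Cal L(\zeta)$ for $\zeta\ne\xi$, and sets $H(\zeta)=P(\zeta)\Phi$, whereas you work directly with an adapted basis $e_1,\dots,e_{n^2}$ of $\Cal O_\xi^{n^2}$ such that $e_1,\dots,e_r$ generate $\Cal K_\xi$ and set $H(\zeta)=\sum_{i\le r}c_i(\xi)\,e_i(\zeta)$---but this is the same construction in coordinates, and the decisive continuity argument (that the continuous solution $C$ lies in $\mathrm{span}\{e_1(\zeta),\dots,e_r(\zeta)\}$ for $\zeta\ne\xi$, hence $\Phi=C(\xi)$ lies there too) is identical.
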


Proofs or  references for the statements contained in this theorem will be given in  Section \ref{local}.
Of course, condition (i) and (ii) are not necessary, whereas condition (iii) is.
We show by examples (Theorem \ref{6.1.16''}) that, in condition (iii), $\Cal C^\infty$ cannot be replaced by $\Cal C^k$ with $k<\infty$ (the same $k$ for all $A$, $B$ and $\xi$). However, see Remark \ref{3.9.16*}.

Theorems \ref{28.6.16*} and \ref{28.6.16**} together give the following

\begin{thm}\label{28.6.16***}
Let $X$ be a  one-dimensional Stein space and $A,B:X\to \mathrm{Mat}(n\times n,\C)$  holomorphic. Then, for the global holomorphic similarity of $A$ and $B$ it is sufficient that, for each point $\xi\in X$, at least one of the following holds.

\begin{itemize}
\item[(i)] There exists $\Phi\in \mathrm{GL}(n,\C)$ with $B(\xi)=\Phi^{-1}A(\xi)\Phi$, and the dimension of the vector space \eqref{30.6.16} is constant for all  $\zeta$ in some neighborhood of $\xi$.

\item[(ii)] $\xi$ is a smooth point of $X$, and $A$ and $B$ are locally $\Cal C^0$ similar at $\xi$.

\item[(iii)] $A$ and $B$ are locally $\Cal C^\infty$ similar at $\xi$.

\end{itemize}
\end{thm}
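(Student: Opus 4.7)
The plan is simply to combine Theorems \ref{28.6.16*} and \ref{28.6.16**}. First I would invoke Theorem \ref{28.6.16*} to reduce the global question to showing that $A$ and $B$ are locally holomorphically similar at every $\xi\in X$. Then for each $\xi$ I would produce, from whichever of (i), (ii), (iii) is assumed to hold at $\xi$, an invertible $\Phi\in\mathrm{GL}(n,\C)$ with $\Phi B(\xi)=A(\xi)\Phi$ together with the auxiliary data required by one of Wasow's, Smith's or Spallek's criteria, and apply the appropriate clause of Theorem \ref{28.6.16**}.

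More concretely: in case (i), the invertible $\Phi$ and the constant-dimension hypothesis for \eqref{30.6.16} are given outright, so Wasow's criterion applies verbatim. In case (ii), local $\Cal C^0$ similarity furnishes a neighborhood $V$ of $\xi$ and a continuous $C:V\to\mathrm{GL}(n,\C)$ with $CB=AC$; I set $\Phi=C(\xi)\in\mathrm{GL}(n,\C)$ and use smoothness of $\xi$ to shrink $V$ to a Riemann surface neighborhood, at which point Smith's criterion applies. In case (iii), local $\Cal C^\infty$ similarity supplies a $\Cal C^\infty$ map $T:V\to\mathrm{GL}(n,\C)$ with $TB=AT$; I take $\Phi=T(\xi)$ and apply Spallek's criterion.

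In each of the three cases Theorem \ref{28.6.16**} then produces a holomorphic $H$ on some neighborhood $U$ of $\xi$ with $H(\xi)=\Phi$ and $HB=AH$ on $U$. Because $\Phi\in\mathrm{GL}(n,\C)$, continuity of $\det$ lets me shrink $U$ so that $H$ takes values in $\mathrm{GL}(n,\C)$, which is precisely local holomorphic similarity of $A$ and $B$ at $\xi$. Since $\xi\in X$ was arbitrary, Theorem \ref{28.6.16*} upgrades this to global holomorphic similarity on $X$.

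There is no real obstacle here: the theorem is a clean synthesis of the two main results of the paper. The only point requiring any attention is the availability of an invertible $\Phi$ in cases (ii) and (iii), but this is automatic from the definition of $\Cal C^k$ similarity, which requires conjugation by a $\mathrm{GL}(n,\C)$-valued map, so that evaluating that conjugator at $\xi$ yields an invertible matrix.
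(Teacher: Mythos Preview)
Your proposal is correct and follows exactly the approach the paper indicates: the paper states Theorem \ref{28.6.16***} as an immediate consequence of Theorems \ref{28.6.16*} and \ref{28.6.16**} without writing out a proof, and you have simply (and accurately) spelled out the routine verification that each of the three local hypotheses feeds into the corresponding criterion of Theorem \ref{28.6.16**} to yield local holomorphic similarity, after which Theorem \ref{28.6.16*} finishes the job.
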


In particular, this contains the following slight improvement of Guralnick's result.

\begin{thm}\label{19.6.16}Let $X$ be a non-compact connected Riemann surface, and  $A,B:X\to \mathrm{Mat}(n\times n,\C)$  holomorphic. If $A$ and $B$  are locally continuously similar at each point of $X$, then they   are globally holomorphically similar on $X$.
\end{thm}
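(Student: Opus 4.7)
The plan is to derive Theorem \ref{19.6.16} as a direct consequence of Theorem \ref{28.6.16***}, whose ingredients are Theorems \ref{28.6.16*} and \ref{28.6.16**}(ii). First, a non-compact connected Riemann surface is a one-dimensional Stein space by the Behnke--Stein theorem, and all of its points are smooth. Hence condition (ii) of Theorem \ref{28.6.16***} is satisfied at every $\xi\in X$ by hypothesis, and Theorem \ref{28.6.16***} delivers the conclusion.

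To see how the two main ingredients combine: starting from a local continuous similarity $H\colon U\to\mathrm{GL}(n,\C)$ around $\xi$ with $HB=AH$ on $U$, one wishes to feed this into Smith's criterion (Theorem \ref{28.6.16**}(ii)), which asks for a globally continuous $C\colon X\to\mathrm{GL}(n,\C)$ with $C(\xi)=H(\xi)$ and $CB=AC$ only on some neighborhood $V$ of $\xi$. Choose a small coordinate disk $V$ centered at $\xi$ with $\overline V\subset U$, and extend $H|_V$ continuously to all of $X$. Since $V$ may be taken contractible and $\mathrm{GL}(n,\C)$ is pathwise connected, this is a routine topological extension: on a slightly larger disk $U'\supset\overline V$ with $\overline{U'}\subset U$ one interpolates $H$ to the constant value $H(\xi)$ (for example via the homotopy $(w,t)\mapsto H(\varphi_t(w))$ arising from a family of retractions of $U'$ toward $\xi$ that is the identity on $V$), and one extends by the constant value $H(\xi)$ outside $U'$. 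The resulting $C$ takes values in $\mathrm{GL}(n,\C)$ because $H$ does, and it agrees with $H$ on $V$, so Smith's criterion applies and produces a local \emph{holomorphic} similarity at $\xi$.

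Having established local holomorphic similarity at every point of the one-dimensional Stein space $X$, Theorem \ref{28.6.16*} then promotes this to global holomorphic similarity, finishing the proof. The principal piece of analysis is Smith's criterion itself, which converts continuous data to holomorphic data on a Riemann surface; the extension step is purely topological, and passage from local to global similarity is absorbed into Theorem \ref{28.6.16*}. In this sense there is no new obstacle in proving Theorem \ref{19.6.16} beyond assembling the earlier results, and the only point that requires a moment of care is ensuring that the extension of $H$ stays in $\mathrm{GL}(n,\C)$, which is immediate because the extension is built from values of $H$ (on $U'$) and from the single invertible matrix $H(\xi)$ (outside $U'$).
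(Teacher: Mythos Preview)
Your proposal is correct and follows exactly the paper's approach: Theorem \ref{19.6.16} is stated in the paper as an immediate special case of Theorem \ref{28.6.16***}, since a non-compact connected Riemann surface is a one-dimensional Stein space with only smooth points, so condition (ii) there applies at every $\xi$.

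One remark: the extension argument in your second paragraph is unnecessary. Condition (ii) of Theorem \ref{28.6.16***} is already phrased as ``$A$ and $B$ are locally $\Cal C^0$ similar at $\xi$'', which matches the hypothesis of Theorem \ref{19.6.16} verbatim, so nothing has to be verified. The apparent requirement ``$C:X\to\mathrm{GL}(n,\C)$'' in the statement of Smith's criterion (Theorem \ref{28.6.16**}(ii)) is not actually used in its proof: the argument there only evaluates $C$ on a neighborhood of $\xi$, so a local continuous similarity suffices. Your extension construction is harmless but superfluous.
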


{\bf Acknowledgements:} I want to thank F. Forstneri\u c and J. Ruppenthal for helpful discussions (in particular, see Remark \ref{3.9.16**}).

\section{Notations}\label{20.6.16}

$\N$ is the set of natural numbers including $0$. $\N^*=\N\setminus\{0\}$.

If $n,m\in \N^*$, then by $\mathrm{Mat}(n\times m,\C)$ we denote the space of complex $n\times m$ matrices ($n$ rows and $m$ columns), and by
$\mathrm{GL}(n,\C)$ the  group of invertible complex $n\times n$ matrices.

The unit matrix in $\mathrm{Mat}(n\times n,\C)$ will be denoted by $I_n$ or simply by $I$.

$\ke \Phi$ denotes the kernel,  $\im \Phi$ the image and $\Vert\Phi\Vert$ the operator norm of a matrix $\Phi\in \mathrm{Mat}(n\times m,\C)$ considered as a linear map from $\C^m$ to $\C^n$.

By a complex space we always mean a {\em reduced} complex space or, using, e.g., the terminology of \cite{L}, an {\em analytic space}..

{\bf On the use of the language of sheaves in this paper:}

Let $X$ be a  topological  space,  and  $G$ a topological group (abelian or non-abelian).
Then we denote by $\Cal C^G_X$, or simply by $\Cal C^G$, the sheaf of continuous $G$-valued maps on $X$, i.e.,  for each non-empty open $U\subseteq X$,
 $\Cal C^{G}_X(U)= \Cal C^{G}(U)$ is the group of continuous maps $f:U\to {G}$, and  $\Cal C^G_X(\emptyset)=\Cal C^G(\emptyset)$ is the neutral element of $G$.

  If $X$ is  a complex space and  $G$ is a complex Lie group, then we denote by $\Cal O^{G}_X$, or simply by $\Cal O^{G}$,  the  subsheaf of holomorphic maps of $\Cal C^G_X$.

All sheaves in this paper are subsheaves of $\Cal C_X^G$ (for some $X$ and some $G$).

Let $\Cal F$ be such a sheaf.

Let $\Cal U=\{U_i\}_{i\in I}$  an open covering of $X$.

A family $f_{ij}\in\Cal F(U_i\cap U_j)$, $i,j\in I$, is called a {\bf $(\Cal U,\Cal F)$-cocycle} if (with the group operation in $G$ written as a multiplication)
\begin{equation*}
f_{ij}f_{jk}=f_{ik}\quad\text{on}\quad U_i\cap U_j\cap U_k\quad\text{for all}\quad i,j,k\in I.\quad\footnote{Here and in the following we use the  convention that  statements like
``$f=g$ on $\emptyset$'' or ``$f:=g$ on $\emptyset$''
have to be omitted.}
\end{equation*}Note that then always $f^{-1}_{ij}=f^{}_{ji}$ and $f^{}_{ii}$ is identically equal to the neutral element of $G$. The set of all $(\Cal U,\Cal F)$-cocycles will be denoted by $Z^1(\Cal U,\Cal F)$.
Two cocycles  $\{f_{ij}\}$ and $\{g_{ij}\}$ in $Z^1(\Cal U,\Cal F)$ are called {\bf $\Cal F$-equivalent} if there exists a family $h_i\in \Cal F(U_i)$, $i\in I$, such that
\[
f^{}_{ij}=h^{}_ig^{}_{ij}h^{-1}_j\quad\text{on}\quad U_i\cap U_j\quad\text{for all}\quad i,j\in I.
\] If, in this case, for all $i,j$, the map $g_{ij}$ is identically equal to the neutral element of $G$,  then $f$ is called {\bf $\Cal F$-trivial}.

We say that $f$ is an {\bf $\Cal F$-cocycle} (on $X$), if there exists an open covering $\Cal U$ of $X$ with $f\in Z^1(\Cal U,\Cal F)$. This covering then is called {\bf the covering of $f$}.
As usual we write
\[H^1(X,\Cal F)=0
\] to say that each $\Cal F$-cocycle is $\Cal F$-trivial.

Let $\Cal U=\{U_i\}_{i\in I}$ and $\Cal U^*=\{V_\alpha\}_{\alpha\in I^*}$  be two  open coverings of $X$ such that $\Cal U^*$ is a refinement of $\Cal U$, i.e., there is a map
$\tau:I^*\to I$ with $U^*_\alpha\subseteq U_{\tau(\alpha)}$ for all $\alpha\in I^*$.
Then we say  that
a $(\Cal U^*,\Cal F)$-cocycle $\{f^*_{\alpha}\}_{\alpha,\beta\in I^*}$  is {\bf induced} by a $(\Cal U,\Cal F)$-cocycle $\{f_{ij}\}_{i,j\in I}$
if this map $\tau$ can be chosen so that
\[
f^*_{\alpha\beta}=f_{\tau(\alpha)\tau(\beta)}^{}\quad\text{on}\quad U^*_i\cap U^*_j\quad\text{for all}\quad \alpha,\beta\in I^*.
\]
We will frequently use the following simple and well-known proposition, see \cite[p. 101]{C} for ``if'' and  \cite[p. 41]{Hi} for ``only if''.
\begin{prop}\label{17.12.15--} Let $f,g\in Z^1(\Cal U,\Cal F)$ and $f^*,g^*\in Z^1(\Cal U^*,\Cal F)$ such that $f^*$ and $g^*$ are  induced by $f$ and $g$, respectively.
Then $f$ and $g$ are $\Cal F$-equivalent if and only if $f^*$ and $g^*$ are $\Cal F$-equivalent.
\end{prop}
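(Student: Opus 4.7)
The plan is to construct the equivalence witnesses by hand in both directions, using the refinement map $\tau:I^*\to I$ supplied by the definition of ``induced.''

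The direct implication is immediate. Given $h_i\in\Cal F(U_i)$ with $f_{ij}=h_ig_{ij}h_j^{-1}$, I would set $h^*_\alpha:=h_{\tau(\alpha)}\big|_{U^*_\alpha}$; substituting $f^*_{\alpha\beta}=f_{\tau(\alpha)\tau(\beta)}$ and $g^*_{\alpha\beta}=g_{\tau(\alpha)\tau(\beta)}$ into the equation $f_{\tau(\alpha)\tau(\beta)}=h_{\tau(\alpha)}g_{\tau(\alpha)\tau(\beta)}h_{\tau(\beta)}^{-1}$ on $U^*_\alpha\cap U^*_\beta$ gives the required $f^*_{\alpha\beta}=h^*_\alpha g^*_{\alpha\beta}(h^*_\beta)^{-1}$.

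The converse is where the content lies. Starting from $h^*_\alpha\in\Cal F(U^*_\alpha)$ satisfying
\[
f_{\tau(\alpha)\tau(\beta)}=h^*_\alpha\,g_{\tau(\alpha)\tau(\beta)}\,(h^*_\beta)^{-1}\qquad\text{on }U^*_\alpha\cap U^*_\beta,
\]
the naive attempt $h_i(x):=h^*_\alpha(x)$ for some $\alpha$ with $x\in U^*_\alpha$ fails because the $h^*_\alpha$'s disagree on overlaps by $f$'s on the left and $g$'s on the right. I would correct this by conjugating each $h^*_\alpha$ back into the ``$U_i$-frame'' via the cocycles themselves, setting
\[
h_i(x)\;:=\;f_{i,\tau(\alpha)}(x)\,h^*_\alpha(x)\,g_{\tau(\alpha),i}(x)\qquad\text{for any }\alpha\text{ with }x\in U^*_\alpha.
\]
Such an $\alpha$ always exists since $\Cal U^*$ covers $X$, and $f_{i,\tau(\alpha)}$, $g_{\tau(\alpha),i}$ are defined near $x$ because $U^*_\alpha\subseteq U_{\tau(\alpha)}$.

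I expect the main step to be showing that this $h_i$ is well-defined. If $x\in U^*_\alpha\cap U^*_\beta\cap U_i$, the hypothesis rearranges to $h^*_\alpha=f_{\tau(\alpha)\tau(\beta)}h^*_\beta g_{\tau(\beta)\tau(\alpha)}$, and substituting, the cocycle identities $f_{i,\tau(\alpha)}f_{\tau(\alpha)\tau(\beta)}=f_{i,\tau(\beta)}$ and $g_{\tau(\beta)\tau(\alpha)}g_{\tau(\alpha),i}=g_{\tau(\beta),i}$ collapse the $\alpha$-expression to the $\beta$-expression. Once well-definedness is in hand, verifying $f_{ij}=h_ig_{ij}h_j^{-1}$ on $U_i\cap U_j$ reduces to a one-line computation using the same $\alpha$ for both $h_i$ and $h_j$: the inner string $g_{\tau(\alpha),i}g_{ij}g_{j,\tau(\alpha)}$ telescopes to the identity of $G$, the $h^*_\alpha(h^*_\alpha)^{-1}$ pair cancels, and what remains is $f_{i,\tau(\alpha)}f_{\tau(\alpha),j}=f_{ij}$. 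No deeper obstacle arises; the only care needed is keeping indices in order since $G$ need not be abelian.
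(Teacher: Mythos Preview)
Your argument is correct and is the standard one. The paper does not give its own proof of this proposition; it only states that the result is ``simple and well-known'' and cites \cite[p.~101]{C} for the ``if'' direction and \cite[p.~41]{Hi} for the ``only if'' direction. Your construction $h_i(x)=f_{i,\tau(\alpha)}(x)\,h^*_\alpha(x)\,g_{\tau(\alpha),i}(x)$ is exactly the classical device for lifting an equivalence from a refinement back to the original covering, and your verification of well-definedness and of the relation $f_{ij}=h_i g_{ij}h_j^{-1}$ via the cocycle identities is complete. The only point worth making explicit is that $h_i$ lies in $\Cal F(U_i)$: this holds because the defining formula expresses $h_i$ locally on each $U_i\cap U^*_\alpha$ as a product of sections of $\Cal F$, and $\Cal F$ is a subsheaf of $\Cal C^G_X$ closed under the group operations.
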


Now let $\Cal U$ and $\Cal V$ be two arbitrary open coverings of $X$, $f\in Z^1(\Cal U,\Cal F)$ and $g\in Z^1(\Cal V,\Cal F)$.
Then we say that $f$ and $g$ are {\bf $\Cal F$-equivalent} if there exist an open covering $\Cal W$ of $X$, which is  a refinement of both $\Cal U$ and
$\Cal V$, and $(\Cal W,\Cal F)$ cocycles $f^*$ and $g^*$, which are induced by $f$ and $g$, respectively, such that $f^*$ and $g^*$ are $\Cal F$
equivalent. By Proposition \ref{17.12.15--}, this definition is in accordance with the definition of equivalence given above for $\Cal U=\Cal V$.

Let $Y$ be a non-empty open subset of $X$.

Then we denote by $\Cal F\vert_Y$ the sheaf  defined by $\Cal F\vert_Y(U)=\Cal F(U)$ for each open $U\subseteq Y$. $\Cal F\vert_Y$ is called the {\bf restriction} of $\Cal F$ to $Y$. If $f$ is an $\Cal F$-cocycle and $\Cal U=\{U_i\}_{i\in I}$ is the covering of $f$, then we denote by $f\vert_Y=\{(f\vert_Y)^{}_{ij}\}_{i,j\in I}$ the $\Cal F\vert_Y$-cocycle  with the covering $\Cal U\cap Y:=\big\{U_i\cap Y\big\}_{i\in I}$  defined by
\[
(f\vert_Y)^{}_{ij}=f^{}_{ij}\big\vert_{U_i\cap U_j\cap Y}\quad\text{for}\quad i,j\in I.
\] We call $f\vert_Y$ the {\bf restriction} of $f$ to $Y$. We say that $f\vert_Y$ is $\Cal F$-trivial if $f\vert_Y$ is $\Cal F\vert_Y$-trivial.

\iffalse
The following proposition is again well-known and easy to prove (see, e.g., \cite[the proof of Prop. 5.7.6]{GL}).

\begin{prop}\label{17.12.15---} Let $\Cal F$ be a subsheaf of $\Cal C^{G}_X$, let $\Cal U$  be an open covering of $X$, and let
 $f$ be an $\Cal F$ cocycle such that, for each non-empty $U\in \Cal U$,  $f\vert_U$ is $\Cal F$ trivial. Then there exists a $(\Cal U,\Cal F)$ cocycle which is $\Cal F$-equivalent to $f$.
\end{prop}
\fi

\section{Proof of Theorem \ref{28.6.16**}{}}\label{local}

We show that the statements of this theorem are known or easily  follow from known results.
First we collect these known results.

We begin with following deep result of K. Spallek, which is a special case of  \cite[Satz 5.4]{Sp1} (see also the beginning of \cite{Sp2}).

\begin{lem}\label{8.12.15'}Let $X$ be a complex space, $M:X\to \mathrm{Mat}(n\times m,\C)$  holomorphic, and $\xi\in X$. Then there exists
$k\in \N$ (depending on $M$ and $\xi$) such that the following holds.

Suppose $U$ is a neighborhood of $\xi$ and $f:U\to \C^m$ is  a $\Cal C^k$ map with
$Mf=0$ on $U$.
Then there exist a neighborhood $V\subset U$ of $\xi$ and a holomorpic map  $h:V\to \C^m$ such that $Mh=0$ on $V$ and $h(\xi)=f(\xi)$.
\end{lem}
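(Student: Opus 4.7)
The plan is to exploit the coherence of the sheaf of holomorphic solutions of $Mf = 0$ together with a quantitative property relating smooth and holomorphic solutions of the system; the desired $h$ will then be produced by freezing at $\xi$ the coefficients of an expansion of $f$ in holomorphic generators.

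First I would consider the $\mathcal{O}_X$-linear sheaf homomorphism $\mathcal{O}_X^m \to \mathcal{O}_X^n$ defined by multiplication by $M$, and set $\mathcal{K} := \ker M$. Since $\mathcal{O}_X$ is a coherent sheaf of rings on any reduced complex space (Oka's theorem), $\mathcal{K}$ is coherent as well. Therefore there exist a neighborhood $V_0$ of $\xi$ and finitely many holomorphic maps $h_1, \dots, h_p : V_0 \to \mathbb{C}^m$ with $Mh_j = 0$ on $V_0$, such that the germs of $h_1, \dots, h_p$ at $\xi$ generate the stalk $\mathcal{K}_\xi$ as an $\mathcal{O}_{X,\xi}$-module (and, possibly after shrinking $V_0$, also every nearby stalk $\mathcal{K}_\zeta$).

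The heart of the proof will be to find $k \in \mathbb{N}$, depending only on $M$ and $\xi$, such that every $\mathcal{C}^k$ map $f : U \to \mathbb{C}^m$ with $Mf = 0$ on a neighborhood $U$ of $\xi$ admits, on some smaller neighborhood $V \subset U \cap V_0$, a representation
\begin{equation*}
f = g_1 h_1 + \cdots + g_p h_p
\end{equation*}
with coefficients $g_1, \dots, g_p$ that are continuous at $\xi$. Granting this decomposition, I would set
\begin{equation*}
h := g_1(\xi)\, h_1 + \cdots + g_p(\xi)\, h_p \quad \text{on } V.
\end{equation*}
Then $h$ is holomorphic, $Mh = 0$ (since each $Mh_j = 0$), and evaluation at $\xi$ gives $h(\xi) = \sum_j g_j(\xi) h_j(\xi) = f(\xi)$, which is exactly what is required.

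The hard part is precisely the decomposition step: it asserts a quantitative version of the statement that, for $k$ sufficiently large, every $\mathcal{C}^k$-section of $\mathcal{K}$ lies in the $\mathcal{C}^0$-module generated by the holomorphic generators $h_1, \dots, h_p$, with the necessary degree of smoothness controlled by the local geometry of $\mathcal{K}$ at $\xi$ (orders of vanishing of the $h_j$, Łojasiewicz exponents attached to the stratification of $X$). In the $\mathcal{C}^\infty$ case on a manifold this reduces to Malgrange's division theorem, but for finite $k$ and on a possibly singular reduced complex space it requires combining Łojasiewicz-type estimates with a local embedding of $X$ into some ambient $\mathbb{C}^N$ and the coherence of the ideal sheaf of $X$ there. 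Since this is exactly the content of \cite[Satz 5.4]{Sp1}, any self-contained proof would essentially reproduce Spallek's argument; the pragmatic course, as adopted by the author, is to invoke that theorem directly.
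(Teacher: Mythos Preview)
The paper does not prove this lemma at all: it simply states it as a special case of Spallek's result \cite[Satz 5.4]{Sp1} and moves on. Your proposal actually gives \emph{more} than the paper does, by sketching a plausible architecture (coherence of $\ker M$, choice of holomorphic generators $h_1,\dots,h_p$, decomposition of a $\mathcal C^k$ solution with continuous coefficients, then freezing coefficients at $\xi$). You correctly identify that the only substantive step---the existence of such a decomposition with the required finite $k$---is precisely the content of Spallek's theorem, and you end by invoking it just as the paper does. So there is no discrepancy to report: both you and the author treat this lemma as a black-box citation, and your added outline is a reasonable heuristic for why the statement should hold, not an independent proof.
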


The following lemma is more easy to prove and nowadays well-known. Proofs can be found, e.g., in
\cite[Corollary 2]{Sh} or  \cite{W}.\footnote{Lemma \ref{9.12.15} is not explicitly stated in \cite{W},
 but it follows immediately from Lemma 1 of \cite{W}. Also, in \cite{W}, $X$ is a domain in the complex plane,  but the proof given there works
also in the general case.}
\begin{lem}\label{9.12.15} Let $X$ be a complex space, $M:X\to \mathrm{Mat}(n\times m,\C)$ holomorphic, and $\xi\in X$ such that the dimension of $\ke M(\zeta)$ does not depend on $\zeta$ in some neigborhood of $\xi$.   Then
 there exist a neighborhood $U$  of $\xi$ and a holomorphic map  $P:U\to \mathrm{Mat}(m\times m,\C)$ such that
\begin{align}\label{25.8.16}&P(\zeta)^2=P(\zeta)\quad\text{for all}\quad\zeta\in U,\\
&\label{9.12.15''''}\ke M(\zeta)=\im P(\zeta)\quad\text{for all}\quad\zeta\in U.
\end{align}In other words, $\{\ke M(\zeta)\}_{\zeta\in U}$ is a holomorphic subvector bundle of $U\times \C^m$.
\end{lem}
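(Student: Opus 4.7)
The plan is to reduce $M(\zeta)$ to a block form with an invertible corner via constancy of the rank, read off the kernel explicitly, and then write down the projection $P$ in closed form.

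Let $r=\rank M(\xi)$. Since $\dim\ke M(\zeta)$ is locally constant and $\rank M(\zeta)=m-\dim\ke M(\zeta)$, the rank equals $r$ throughout some neighborhood of $\xi$. I would pick rows and columns along which $M(\xi)$ has a nonzero $r\times r$ minor. After a constant permutation of the rows of $M$ and of the coordinates of $\C^m$ -- the latter merely conjugates the eventual $P$ by the corresponding permutation matrix -- I may assume this minor sits in the top-left corner, so that
\[
M=\begin{pmatrix} A & B\\ C & D\end{pmatrix}
\]
with an $r\times r$ block $A$ that is invertible at $\xi$, hence on some open neighborhood $U$ of $\xi$.

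Next, on $U$ the first $r$ rows $(A\ B)$ already have rank $r$, while by hypothesis $\rank M\equiv r$; hence the last $n-r$ rows are linear combinations of the first $r$, forcing $(C\ D)=CA^{-1}(A\ B)$, i.e.\ $D=CA^{-1}B$ on $U$. Consequently the equation $M(\zeta)(x,y)=0$ reduces to the single block equation $A(\zeta)x+B(\zeta)y=0$, and
\[
\ke M(\zeta)=\Big\{\bigl(-A(\zeta)^{-1}B(\zeta)y,\,y\bigr):y\in\C^{m-r}\Big\}.
\]

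Finally, I would define
\[
P(\zeta):=\begin{pmatrix} 0 & -A(\zeta)^{-1}B(\zeta)\\ 0 & I_{m-r}\end{pmatrix},
\]
which is holomorphic on $U$; a direct computation yields $P(\zeta)^2=P(\zeta)$, and the description of $\ke M(\zeta)$ above is exactly $\im P(\zeta)$. I do not expect any serious obstacle in this scheme; the only step that uses a short piece of linear algebra is the identity $D=CA^{-1}B$ on $U$, which is forced by the constancy of the rank together with the invertibility of $A$.
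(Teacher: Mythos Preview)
Your argument is correct. The reduction to an invertible top-left $r\times r$ block via a constant permutation of rows and columns, the identity $D=CA^{-1}B$ forced pointwise by the rank condition, and the explicit projection
\[
P(\zeta)=\begin{pmatrix} 0 & -A(\zeta)^{-1}B(\zeta)\\ 0 & I_{m-r}\end{pmatrix}
\]
all go through without difficulty, and nothing in the argument uses smoothness of $X$: the entries of $A^{-1}$ are holomorphic on $U$ by Cramer's rule, so $P$ is holomorphic on the (possibly singular) complex space.

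As for comparison with the paper: the paper does \emph{not} supply its own proof of this lemma. It merely states that the result is well-known and refers to Shubin \cite[Corollary~2]{Sh} and Wasow \cite{W} (with the footnote that in \cite{W} it is an immediate consequence of Lemma~1 there, and that the restriction in \cite{W} to domains in $\C$ is inessential). Your direct block-reduction argument is the standard elementary proof and is entirely in the spirit of what is indicated in \cite{W}; there is nothing to correct or add.
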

\begin{cor}\label{11.12.15-}Under the hypothesis of Lemma \ref{9.12.15}, for each vector $v\in \C^n$ with $M(\xi)v=0$, there
exist a neighborhood $U$ of $\xi$ and a holomorphic map  $h:U\to E$ such that $Mh=0$ on $V$ and $h(\xi)=v$.
\end{cor}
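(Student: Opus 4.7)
The plan is to use the holomorphic projection $P$ supplied by Lemma \ref{9.12.15} and simply define $h$ to be the image of the constant vector $v$ under $P$.

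More concretely, I would first apply Lemma \ref{9.12.15} to obtain a neighborhood $U$ of $\xi$ and a holomorphic map $P:U\to\mathrm{Mat}(m\times m,\C)$ satisfying $P(\zeta)^2=P(\zeta)$ and $\ke M(\zeta)=\im P(\zeta)$ for all $\zeta\in U$. Then I would set
\[
h(\zeta):=P(\zeta)v,\qquad \zeta\in U,
\]
which is holomorphic since $P$ is holomorphic and $v$ is constant.

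Next I would check the two required properties. For $Mh=0$ on $U$: by \eqref{9.12.15''''}, the image of $P(\zeta)$ lies in $\ke M(\zeta)$, so $M(\zeta)P(\zeta)=0$ on $U$, hence $M(\zeta)h(\zeta)=M(\zeta)P(\zeta)v=0$. For $h(\xi)=v$: the hypothesis $M(\xi)v=0$ combined with \eqref{9.12.15''''} gives $v\in\im P(\xi)$, so $v=P(\xi)w$ for some $w\in\C^m$, and then by the idempotency \eqref{25.8.16},
\[
h(\xi)=P(\xi)v=P(\xi)^2 w=P(\xi)w=v.
\]

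There is no real obstacle here; the corollary is essentially a one-line consequence of the existence of the holomorphic projection onto $\ke M(\zeta)$ provided by Lemma \ref{9.12.15}. I would only remark that, as stated, the target space ``$E$'' and the domain ``$V$'' in the corollary appear to be typographical slips for $\C^m$ and $U$, respectively, which is what the argument actually produces.
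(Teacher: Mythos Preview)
Your proof is correct and is exactly the paper's approach: the paper's proof consists of the single line ``Set $h(\zeta)=P(\zeta)v$, where $P$ is as in Lemma \ref{9.12.15}.'' Your additional verification of the two properties and your remark about the typographical slips ($E$ for $\C^m$, $V$ for $U$) are both accurate.
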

\begin{proof} Set $h(\zeta)=P(\zeta)v$, where $P$ is as in Lemma \ref{9.12.15}.
\end{proof}

Also the following lemma seems to be well-known. To my knowledge, for the first time it was observed  in \cite[p. 200]{BKL}. For completeness we supply the proof mentioned there.

\begin{lem}\label{23.6.16''} Let $X$ be a Riemann surface, and let $M:X\to \mathrm{Mat}(n\times m,\C)$ be holomorphic. Then, for each $\xi\in X$, there are a neighborhood $U$ and a holomorphic map $P:U\to \mathrm{Mat}(m\times m,\C)$ such that
\begin{align*}&P(\zeta)^2=P(\zeta)\quad\text{for all}\quad \zeta\in U,\\
& \im P(\zeta)= \ke M(\zeta)\quad\text{for all}\quad\zeta\in U\setminus\{\xi\},\\
&\im P(\xi)\subseteq \ke M(\xi).
\end{align*}In other words, there exists a holomorphic subvector bundle $\{K(\zeta)\}_{\zeta\in U}$ of $U\times \C^m$ such that $K(\zeta)=\ke M(\zeta)$ for all $\zeta\in U\setminus\{\xi\}$ and $K(\xi)\subseteq \ke M(\xi)$.
\end{lem}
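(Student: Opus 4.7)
The plan is to locate the subbundle $\ke M$ on the punctured neighborhood $U\setminus\{\xi\}$, extend it as a subbundle across $\xi$ using the compactness (indeed projectivity) of the Grassmannian, and then build the idempotent $P$ from a frame.

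First I would choose a coordinate disk $U$ around $\xi$ with local coordinate $z$ vanishing at $\xi$. Since $\zeta\mapsto\rank M(\zeta)$ is lower semicontinuous and on a Riemann surface the set where it drops below its generic value is a closed analytic, hence discrete, subset, after shrinking $U$ I may assume that $\rank M(\zeta)$ is constantly equal to some $r$ on $U\setminus\{\xi\}$ (with $\rank M(\xi)\leq r$). Set $k=m-r$; then $\dim\ke M(\zeta)=k$ for $\zeta\in U\setminus\{\xi\}$ and $\dim\ke M(\xi)\geq k$. Applying Lemma \ref{9.12.15} on $U\setminus\{\xi\}$ shows that $\{\ke M(\zeta)\}_{\zeta\in U\setminus\{\xi\}}$ is a holomorphic subvector bundle of $(U\setminus\{\xi\})\times\C^m$ of rank $k$; equivalently, $\zeta\mapsto \ke M(\zeta)$ is a holomorphic map $\varphi:U\setminus\{\xi\}\to\mathrm{Gr}(k,m)$ into the Grassmannian.

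The core step is to extend $\varphi$ holomorphically across $\xi$. I would use that $\mathrm{Gr}(k,m)$ embeds via Pl\"ucker into some $\p^N$, and that any holomorphic map from a punctured disk to $\p^N$ extends holomorphically to the whole disk: writing it in homogeneous coordinates as $[f_0:\dots:f_N]$ with $f_j$ meromorphic at $\xi$, one multiplies through by a high power of $z$ to clear poles, then divides by the largest power of $z$ that divides all numerators, yielding holomorphic representatives without a common zero at $\xi$. Pulling back the tautological subbundle along the extended map $\tilde\varphi:U\to\mathrm{Gr}(k,m)$ gives a rank-$k$ holomorphic subvector bundle $\{K(\zeta)\}_{\zeta\in U}$ of $U\times\C^m$ with $K(\zeta)=\ke M(\zeta)$ for $\zeta\neq\xi$. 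Since $M(\zeta)v=0$ for every $v\in K(\zeta)$ with $\zeta\neq\xi$, continuity forces $M(\xi)K(\xi)=0$, i.e., $K(\xi)\subseteq\ke M(\xi)$.

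Finally I would convert $K$ into the idempotent $P$. Pick a holomorphic frame $v_1,\ldots,v_k:U\to\C^m$ of $K$ near $\xi$ (shrinking $U$), and choose constant vectors $w_{k+1},\ldots,w_m\in\C^m$ so that $v_1(\xi),\ldots,v_k(\xi),w_{k+1},\ldots,w_m$ is a basis of $\C^m$; by continuity the matrix
\[
F(\zeta)=\bigl[\,v_1(\zeta)\;\cdots\;v_k(\zeta)\;w_{k+1}\;\cdots\;w_m\,\bigr]
\]
is invertible in a (further shrunken) neighborhood of $\xi$, so
\[
P(\zeta):=F(\zeta)\begin{pmatrix} I_k & 0 \\ 0 & 0 \end{pmatrix}F(\zeta)^{-1}
\]
is a holomorphic idempotent with $\im P(\zeta)=K(\zeta)$, as required.

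The main obstacle is the extension of $\varphi$ across $\xi$: one needs some global input to rule out the possibility that $\ke M$ genuinely spirals off to infinity in $\mathrm{Gr}(k,m)$ at $\xi$. Compactness (projectivity) of the Grassmannian supplies exactly this input, and the argument is clean because $X$ is one-dimensional, so the puncture is isolated and a Laurent-expansion clearing of denominators suffices. An equivalent, more hands-on alternative is to take any holomorphic frame $E$ of $\ke M$ on $U\setminus\{\xi\}$ and apply Smith normal form to $E$ over the discrete valuation ring $\Cal O_{X,\xi}$ to absorb the poles into holomorphic column operations; either viewpoint exploits the one-dimensionality essentially.
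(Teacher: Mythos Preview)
Your extension step contains a genuine error: it is \emph{not} true that every holomorphic map from a punctured disk to $\p^N$ extends across the puncture. The map $z\mapsto[1:e^{1/z}]$ from $\D\setminus\{0\}$ to $\p^1$ is a counterexample---compactness of the target does not rule out essential singularities in the analytic category. Your clearing-of-denominators argument presupposes that the homogeneous coordinates can be chosen \emph{meromorphic} at $\xi$, and that is precisely what needs justification. The fix is to use that $M$ itself is holomorphic across $\xi$: after shrinking $U$, choose $r$ rows of $M$ that are linearly independent on $U\setminus\{\xi\}$; the Pl\"ucker coordinates of $\ke M(\zeta)$ are then, up to signs, the $r\times r$ minors of this $r\times m$ submatrix, and these are holomorphic on all of $U$, so your division by a power of $z$ now goes through. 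The same defect afflicts your Smith-normal-form alternative: an arbitrary holomorphic frame of $\ke M$ on $U\setminus\{\xi\}$ may have entries with essential singularities at $\xi$, so you cannot work over $\Cal O_{X,\xi}$ without first producing a meromorphic frame, which again requires the holomorphy of $M$ at $\xi$.

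By contrast, the paper applies Smith factorization directly to $M$ over the discrete valuation ring $\Cal O_{X,\xi}$, writing $M=E\big(\begin{smallmatrix}\Delta&0\\0&0\end{smallmatrix}\big)F$ near $\xi$ with $E,F$ holomorphically invertible and $\Delta$ diagonal with entries $(\zeta-\xi)^{\kappa_j}$; then $P:=F^{-1}\big(\begin{smallmatrix}0&0\\0&I_{m-r}\end{smallmatrix}\big)F$ is the required idempotent in one line. This uses the holomorphy of $M$ at $\xi$ from the outset and bypasses the Grassmannian entirely.
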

\begin{proof} Set $r=\max_{\zeta\in X}\dim \im M(\zeta)$.
If $r= 0$, then $\varphi\equiv 0$ and  $P(\zeta):=I$, $\zeta\in X$, has the required properties. Let $r>0$.
Then (we may assume that $X$ is connected), by the Smith factorization theorem  (see, e.g., \cite[Ch. III, Sect. 8]{J},
applied to the ring of germs of holomorphic functions in neighborhoods of $\xi$ (a direct proof for that ring can be found, e.g., in \cite[\S 1.3]{GS} and \cite[Theorem 4.3.1]{GL}), we can find an open neighborhood $U$  of $\xi$, holomorphic maps $E:U\to \mathrm{GL}(n,\C)$, $F:U\to \mathrm{GL}(m,\C)$, and nonnegative  integers $\kappa_{1},\ldots,\kappa_{r}$
such that
\begin{equation*}
M(\zeta)=E(\zeta)\begin{pmatrix}\Delta(\zeta)&0\\0&0\end{pmatrix}F(\zeta)\quad\text{for all}\quad \zeta\in U,
\end{equation*}where $\Delta(\zeta)$ is the diagonal matrix with diagonal $(\zeta-\xi)^{\kappa_{1}},\ldots,(\zeta-ßxi)^{\kappa_{r}}$.
Then
\[
P(\zeta):=F^{-1}(\zeta)\begin{pmatrix}0&0\\0&I_{m-r}\end{pmatrix}F(\zeta),\quad\zeta\in U,
\]has the required properties.\footnote{If $r=\min(n,m)$ some of the zeros in the block matrices have to be omitted.}
\end{proof}

{\em Proof of Theorem \ref{28.6.16**}.}   Let $\mathrm{End} (\mathrm{Mat}(n\times n,\C))$ be the space of linear endomorphisms of $\mathrm{Mat}(n\times n,\C)$, and let $\varphi^{}_{A,B}:X\to \mathrm{End} (\mathrm{Mat}(n\times n,\C))$  be  the holomorphic map
defined by
\[
\varphi_{A,B}(\zeta)\Phi= A(\zeta)\Phi-\Phi B(\zeta)\quad\text{for}\quad \zeta\in X\text{ and }\Phi\in \mathrm{Mat}(n\times n,\C).
\]Fix a basis of $\mathrm{Mat}(n\times n,\C)$, and let $M_{A,B}^{}$ be the representation matrix of $\varphi^{}_{A,B}$ with respect to this basis.

{\em Proof under hypothesis} (i). This was first proved by W. Wasow \cite{W}. Wasow  considered only the case when $X$ is a domain in $\C$, but his proof works also in the general case. It goes as follows:

By definition of $\varphi^{}_{A,B}$, \eqref{30.6.16} is the kernel of $\varphi^{}_{A,B}(\zeta)$. Therefore, we have
a neighborhood $U$ of $\xi$ and a number $r\in\N$ such that
\[
\dim\ke\varphi(\zeta)=r\quad\text{for all}\quad \zeta\in U.
\] By Lemma \ref{9.12.15}, this means that the family $\{\ke \varphi^{}_{A,B}(\zeta)\}_{\zeta\in U}$ is a holomorphic subvector bundle of the product bundle $U\times \mathrm{Mat}(n\times n,\C)$. Since $\Phi\in \ke \varphi^{}_{A,B}(\xi)$, then,  after shrinking $U$, we we can find a holomorphic section $H$ of this bundle with $H(\xi)=\Phi$. \qed

{\em Proof under hypothethis} (ii).
From Lemma \ref{23.6.16''} applied  to $M^{}_{A,B}$, we get a  neighborhood $U$ of $\xi$ and a holomorphic map $P:U\to \mathrm{End} (\mathrm{Mat}(n\times n,\C))$ satisfying
\begin{align}&\label{24.6.16neu}P(\zeta)^2=P(\zeta)\quad\text{for all}\quad \zeta\in U,\\
&\label{24.6.16'neu} \im P(\zeta)= \ke \varphi^{}_{A,B}(\zeta)\quad\text{for all}\quad\zeta\in U\setminus\{\xi\},\\
&\label{24.6.16''neu}\im P(\xi)\subseteq \ke \varphi^{}_{A,B}(\xi).
\end{align}
Moreover, since $CB=AC$ on $U$, then
$C(\zeta)\in \ke \varphi^{}_{A,B}(\zeta)$ for all $\zeta\in U$. By \eqref{24.6.16'neu}, this implies that $C(\zeta)\in \im P(\zeta)$ for all $\zeta\in U\setminus\{\xi\}$. By \eqref{24.6.16neu} this further implies that
$P(\zeta)\big(C(\zeta)\big)=C(\zeta)$ for all $\zeta\in U\setminus\{\xi\}$ and, by continuity,
\[
P(\xi)\big(\Phi\big)=P(\xi)\big(C(\xi)\big)=C(\xi)=\Phi.
\] Define a holomorphic map $H:U\to\mathrm{Mat}(n\times n,\C)$ by
\[
H(\zeta)=P(\zeta)\big(\Phi\big),\quad\zeta\in U.
\] Then $H(\xi)=\Phi$ and, by \eqref{24.6.16'neu} and \eqref{24.6.16''neu}, $HB=AH$ on $U$.  \qed

{\em Proof under hypothesis} (iii). Then from Spallek's theorem (Lemma \ref{8.12.15'} with $M$ a representation matrix of $\varphi^{}_{A,B}$ and $v=\Phi$) we get a
neighborhood $U$ of $\xi$ and a holomorphic map  $H:U\to \mathrm{Mat}(n\times n,\C)$ such that $H(\xi)=\Phi$ and $H(\zeta)\in \ke\varphi^{}_{A,B}(\zeta)$ for all $\zeta\in U$, i.e., $HB=AH$ on $U$. \qed

\begin{rem}\label{3.9.16*} The proof under hypothesis (iii) shows that actually the following holds:
There exists a  positive integer $k$ depending on $\xi$ and $A$ such that, if there exist a neighborhood $U$ of $\xi$ and a $\Cal C^k$ map $T:V\to \mathrm{Mat}(n\times n,\C)$ such that $T(\xi)=\Phi$ and $TB=AT$ on $U$,
then  there exist a neighborhood $V\subseteq U$ of $\xi$ and a holomorphic map $H:V\to \mathrm{Mat}(n\times n,\C)$ such that  $H(\xi)=\Phi$ and
$HB=AH$ on $V$.
\end{rem}

\section{An Oka principle and proof of Theorem \ref{24.8.16+}}

\begin{defn}\label{6.6.16''} Let  $\Phi\in\mathrm{Mat}(n\times n,\C)$.
We denote by $\mathrm{Com\,} \Phi$ the algebra of all $\Theta\in \Phi\in\mathrm{Mat}(n\times n,\C)$ with $\Phi \Theta=\Theta \Phi$, and by $\mathrm{GCom\,} \Phi$ we denote the group of invertible elements of $\mathrm{Com\,} \Phi$.
Note that, as easily seen,
\begin{align}&\label{17.8.16+}\mathrm{GCom\,} \Phi=\mathrm{GL}(n,\C)\cap \mathrm{Com\,} \Phi,\text{ and}\\
&\label{26.8.16+}
 \mathrm{Com\,}(\Gamma^{-1}\Phi\Gamma)=\Gamma^{-1}(\mathrm{Com \,}\Phi)\Gamma\quad\text{for all}\quad\Gamma\in\mathrm{GL}(n,\C).
 \end{align}
\end{defn}
\begin{lem}\label{19.1.16''}  $\mathrm{GCom\,} \Phi$  is connected, for each $\Phi\in\mathrm{Mat}(n\times n,\C)$.
 \end{lem}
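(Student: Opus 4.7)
The plan is to exploit the fact that $\mathrm{Com}\,\Phi$ is a \emph{linear} subspace (even a subalgebra) of $\mathrm{Mat}(n\times n,\C)$, so that $\mathrm{GCom}\,\Phi$, by \eqref{17.8.16+}, is the complement in this complex vector space of the zero set of a single polynomial. Path-connectedness will then follow by the standard trick of joining two points along a complex line.

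In detail: given $A,B\in\mathrm{GCom}\,\Phi$, set
\[
L(t)=(1-t)A+tB,\qquad t\in\C.
\]
Since $\mathrm{Com}\,\Phi$ is closed under $\C$-linear combinations, $L(t)\in\mathrm{Com}\,\Phi$ for every $t\in\C$. The function $p(t):=\det L(t)$ is a polynomial in $t$ of degree at most $n$, and $p(0)=\det A\neq 0$, so $p$ is not identically zero. Hence its zero set $Z\subset\C$ is finite, and $\C\setminus Z$ is path-connected. Picking any continuous path $\gamma:[0,1]\to\C\setminus Z$ with $\gamma(0)=0$ and $\gamma(1)=1$, the map $t\mapsto L(\gamma(t))$ is a continuous path in $\mathrm{Com}\,\Phi$ avoiding the non-invertible matrices, i.e.\ a path in $\mathrm{GCom}\,\Phi$ from $A$ to $B$ (using \eqref{17.8.16+} in reverse to conclude that $L(\gamma(t))\in\mathrm{GCom}\,\Phi$).

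Thus $\mathrm{GCom}\,\Phi$ is path-connected, hence connected. There is no real obstacle here; the only thing that needs to be noticed is the linearity of $\mathrm{Com}\,\Phi$, which makes the straight-line segment between $A$ and $B$ lie entirely in $\mathrm{Com}\,\Phi$ and reduces the problem to the triviality that $\C$ minus finitely many points is connected. Note also that \eqref{26.8.16+} is not needed in this argument --- one does not have to pass to Jordan normal form.
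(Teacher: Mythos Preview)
Your proof is correct. Both you and the paper exploit the same underlying fact---that $\mathrm{Com}\,\Phi$ is a complex linear subspace and the invertible elements form the complement of the zero set of $\det$---but the executions differ. The paper connects an arbitrary $\Theta\in\mathrm{GCom}\,\Phi$ to $I$ by a three-piece path: first $\Theta+\lambda(t)I$ with $\lambda$ chosen to avoid (the negatives of) the eigenvalues of $\Theta$, then a straight segment from $\Theta+(1+\Vert\Theta\Vert)I$ to $(1+\Vert\Theta\Vert)I$ (invertible because $\Vert\frac{2-t}{1+\Vert\Theta\Vert}\Theta\Vert<1$), then a scalar path down to $I$. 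Your argument is more direct: you join two arbitrary points $A,B$ by the complex line $L(t)=(1-t)A+tB$ and observe that $\det L(t)$ is a non-vanishing polynomial, so its zero set is finite and can be avoided by a path in $\C$ from $0$ to $1$. This is the standard ``complex line through two points avoids a hypersurface'' trick, and it dispatches the lemma in one stroke rather than three; the paper's version has the minor advantage of being slightly more explicit about the path but is otherwise no stronger.
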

\begin{proof}
Let $\Theta\in \mathrm{GCom\,} \Phi$. Since the set of eigenvalues of $\Phi$ is finite, and the numbers
$0$ and $-1-\Vert \Theta\Vert$ do not belong to it,  then we can find a continuous map
$\lambda:[0,1]\to \C$ such that $\lambda(0)=0$, $\lambda(1)= 1+\Vert\Theta\Vert$ and $\Theta+\lambda(t)I\in\mathrm{GL}(n,\C)$ for all $0\le t\le 1$.
Setting
\[
\gamma(t)=\begin{cases}\Theta+\lambda(t)I\quad&\text{if}\quad 0\le t\le 1,\\
(1+\Vert \Theta\Vert)\Big(\frac{2-t}{1+\Vert \Theta\Vert}\Theta+I\Big)\quad &\text{if}\quad 1\le t\le 2,\\
(1+(3-t)\Vert\Theta\Vert) I \quad &\text{if}\quad 2\le t\le 3,
\end{cases}
\]
then we obtain a continuous path $\gamma$ in $\mathrm{GL}(n,\C)$, which connects $\Theta=\gamma(0)$ with
$I=\gamma(3)$. Since $\Theta\in \mathrm{Com\,}\Phi$, from the definition of $\gamma$ it is clear that the values of $\gamma$ belong to the algebra $\mathrm{Com\,\Phi}$. In view of \eqref{17.8.16+}, this means that $\gamma$ lies inside $\mathrm{GCom\,}\Phi$.
\end{proof}

\begin{defn}\label{18.8.16n'}
Let $X$ be a complex space, and  $A:X\to \mathrm{Mat}(n\times n,\C)$ holomorphic. We introduce the families
\[
 \mathrm{Com\,}A:=\big\{\mathrm{Com\,}A(z)\big\}_{z\in X}\quad\text{and}\quad \mathrm{GCom\,}A:=\big\{\mathrm{GCom\,}A(z)\big\}_{z\in X}.
\]
If the dimension of  $\mathrm{Com\,}A(z)$ does not depend on $z$, then it follows from Lemma \ref{9.12.15} that $\mathrm{Com\,}A$ is a holomorphic vector bundle, but it is clear that this dimension can jump (in an analytic set).  But even if $\mathrm{Com\,}A$ is a holomorphic vector bundle,   $\mathrm{Com\,}A$ need not be locally trivial as a bundle of algebras. In particular, $\mathrm{GCom\,}A$ need not be locally trivial as a bundle of groups. Moreover, it is possible that  $\mathrm{GCom\,}A$ is not locally trivial as a bundle of topological spaces. Here is an example.
\end{defn}

\begin{leer}\label{24.8.16'}{\bf Example.}
Let $X=\C$ and $A(z):=\begin{pmatrix}z &1\\0&0\end{pmatrix}$, $z\in \C$.
Then
\[
\begin{pmatrix}a&b\\c&d\end{pmatrix}\in\mathrm{Com\,}A(z)\Leftrightarrow\begin{pmatrix}za+c &zb+d\\0&0\end{pmatrix}=\begin{pmatrix}za &a\\zc&c\end{pmatrix}\Leftrightarrow c=0\text{ and }a=zb+d,
\]which implies that $\dim \mathrm{Com\,}A(z)=2$ for all $z\in\C$. However
\[
\mathrm{GCom\,}A\,(0)=\bigg\{\begin{pmatrix}a&b\\0&a\end{pmatrix}\;\bigg\vert\; a\in \C^*, b\in \C\bigg\}
\]whereas, for $z\not=0$, $\mathrm{GCom\,}A\,(z)$ is isomorphic to
\[
\bigg\{\begin{pmatrix}a&0\\0&d\end{pmatrix}\;\bigg\vert\; a,d\in \C^*\bigg\}\quad\text{if}\quad z\not=0,
\]which implies that $\pi_1\big(\mathrm{GCom\,}A\,(0)\big)=\Z$ whereas
$\pi_1\big(\mathrm{GCom\,}A\,(z)\big)=\Z^2 $ if $ z\not=0$. Hence, for $z\not=0$, $\mathrm{GCom\,}A(z)$
is  not homeomorphic to $\mathrm{GCom\,}A(0)$.
\end{leer}

\begin{defn}\label{24.8.16''}  Let $X$ be a complex space, and  $A:X\to \mathrm{Mat}(n\times n,\C)$ holomorphic.
Even if the families $\mathrm{Com\,}A$ and/or $\mathrm{GCom\,}A$ are not locally trivial, their sheaves of holomorphic,  continuous and $\Cal C^\infty$ sections  are well-defined. We denote them by $\Cal O^{\mathrm{Com\,}A}$, $\Cal C^{\mathrm{Com\,}A}$, $(\Cal C^\infty)^{\mathrm{Com\,}A}$ etc.

Further, we define sheaves
$\widehat{\Cal C}^{\mathrm{Com\,}A}$  and $\widehat{\Cal C}^{\mathrm{GCom\,}A}$ as follows: if  $U$ is a non-empty open subset of $X$, then   $\widehat{\Cal C}^{\mathrm{Com\,}A}(U)$ is the algebra of all $f\in\Cal C^{\mathrm{Com\,}A}(U)$
such that, for each $\xi\in U$, the following condition is satisfied:
\begin{equation}\label{19.8.16}\begin{cases}&\text{there exist a neighborhood }V\text{ of }\xi\\&\text{and }h\in \Cal O^{\mathrm{Com\,}A}(V)\text{ such that }h(\xi)=f(\xi)
\end{cases}\end{equation}and we set $\widehat{\Cal C}^{\mathrm{GCom\,}A}(U)=\Cal C^{\mathrm{GCom\,}A}(U)\cap\widehat{\Cal C}^{\mathrm{Com\,}A}(U)$.

{\em Remark 1:} By Spallek's criterion (Theorem \ref{28.6.16**}, condition (iii)), $(\Cal C^\infty)^{\mathrm{Com\,}A}$ and $(\Cal C^\infty)^{\mathrm{GCom\,}A}$ are subsheaves of $\widehat{\Cal C}^{\mathrm{Com\,}A}$ and $(\widehat{\Cal C}^\infty)^{\mathrm{GCom\,}A}$, respectively.

{\em Remark 2:} If $X$ is a Riemann surface, then, by the Smith criterion (Theorem \ref{28.6.16**}, condition (ii)), $\widehat{\Cal C}^{\mathrm{Com\,}A}={\Cal C}^{\mathrm{Com\,}A}$ and $\widehat{\Cal C}^{\mathrm{GCom\,}A}={\Cal C}^{\mathrm{GCom\,}A}$.
\end{defn}

The following Oka principle is a special case of a result  O. Forster and K. J.  Ramspott \cite[Satz 1]{FR1}.
\begin{prop}\label{28.6.16-} Let $X$ be a Stein space, and $A:X\to \mathrm{Mat}(n\times n,\C)$ a holomorphic map. Then each $\widehat{\Cal C}^{\mathrm{GCom\,}A}$-trivial $\Cal O^{\mathrm{GCom\,}A}$-cocycle is ${\Cal O}^{\mathrm{GCom\,}A}$-trivial.
\end{prop}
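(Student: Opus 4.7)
My plan is to deduce the proposition directly from the cited Forster--Ramspott Oka principle \cite[Satz 1]{FR1}, so the bulk of the work is to verify that the hypotheses of that theorem are met by the data $(X, A, \Cal O^{\mathrm{GCom\,}A}, \widehat{\Cal C}^{\mathrm{GCom\,}A})$. Roughly, Forster--Ramspott require that one has a bundle-like family of complex Lie groups over a Stein space, given by a coherent analytic sheaf of (non-abelian) groups with connected fibres, together with a distinguished subsheaf of ``admissible'' continuous sections — exactly the sections which are locally holomorphically extendable at each point, in our notation $\widehat{\Cal C}^{\mathrm{GCom\,}A}$.

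First I would check the coherence/analyticity hypothesis. The sheaf $\Cal O^{\mathrm{Com\,}A}$ is the sheaf of holomorphic solutions $\Theta$ of the linear system $A\Theta-\Theta A=0$, i.e.\ it is the kernel of the $\Cal O_X$-linear morphism $\varphi_{A,A}: \Cal O_X^{n^2}\to \Cal O_X^{n^2}$ induced by the matrix $M_{A,A}^{}$ from Section~\ref{local}. Hence $\Cal O^{\mathrm{Com\,}A}$ is a coherent $\Cal O_X$-sheaf of (associative) algebras, and $\Cal O^{\mathrm{GCom\,}A}$ is the sheaf of its invertible elements; this is the ``analytic structure'' required by \cite{FR1}. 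The definition of $\widehat{\Cal C}^{\mathrm{GCom\,}A}$ is then precisely the compatibility condition of Forster--Ramspott: a continuous section $f$ of $\mathrm{GCom\,}A$ is admissible if and only if at every $\xi$ its value $f(\xi)$ is the value of some holomorphic section, which is their notion of a continuous section ``of analytic type''.

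Next I would verify the fibrewise connectedness: for every $z\in X$, the group $\mathrm{GCom\,}A(z)$ is connected by Lemma~\ref{19.1.16''}. Combined with the coherence of $\Cal O^{\mathrm{Com\,}A}$ and with the Steinness of $X$, this places us exactly in the setting of \cite[Satz 1]{FR1}, whose conclusion is the Oka principle: any $\Cal O^{\mathrm{GCom\,}A}$-cocycle which becomes trivial in $\widehat{\Cal C}^{\mathrm{GCom\,}A}$ is already trivial in $\Cal O^{\mathrm{GCom\,}A}$. The main potential obstacle is not mathematical but expository, namely aligning our notation (the sheaf $\widehat{\Cal C}$ of sections which are locally ``holomorphic at the point'') with the precise formalism of Forster--Ramspott; once one unwinds their definition and uses Lemma~\ref{19.1.16''} to supply connectedness, the result is a direct translation.
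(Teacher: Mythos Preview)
Your proposal is correct and follows essentially the same route as the paper: both reduce the statement directly to \cite[Satz 1]{FR1} after checking that $\Cal O^{\mathrm{GCom\,}A}$ (with generating coherent Lie-algebra sheaf $\Cal O^{\mathrm{Com\,}A}$) together with $\widehat{\Cal C}^{\mathrm{GCom\,}A}$ form an admissible pair in the Forster--Ramspott sense. The only cosmetic difference is that the paper makes the ``coherent $\Cal O$-subsheaf'' structure explicit via the $\exp/\log$ maps and notes that condition (PH) of Satz~1 is trivially satisfied, whereas you emphasize instead the coherence of the kernel sheaf and the fibrewise connectedness from Lemma~\ref{19.1.16''}; these are just different ways of packaging the same verification.
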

Indeed it is easy too see that, for each non-empty open $U\subseteq X$ we have: If $h\in \Cal O^{\mathrm{Com\,}A}(U)$, then $e^h\in\Cal O^{\mathrm{GCom\,}A}(U)$, and, if $ H\in\Cal O^{\mathrm{GCom\,}A}(U)$  with $\sup_{\zeta\in U}\Vert H(\zeta)-I\Vert<1 $, then
\[
\log H:=\sum_{\mu=1}^\infty (-1)^{\mu-1}\frac{(H-I)^\mu}{\mu}\;\in\; \Cal O^{\mathrm{Com\,}A}(U).
\]
This shows that $\Cal O^{\mathrm{GCom\,}A}$ is a {\em coherent $\Cal O$-subsheaf} of $\Cal O^{\mathrm{GL}(n,\C)}_X$ in the sense of \cite[\S 2]{FR1}, where $\Cal O^{\mathrm{Com\,}A}$ is the {\em generating sheaf of Lie algebras}.
Moreover, as observed in \cite[\S 2.3, example b)]{FR1}), the pair $\big(\Cal O^{\mathrm{GCom\,}A},\widehat{\Cal C}^{\mathrm{GCom\,}A}\big)$ is an {\em admissible pair} in the sense of \cite{FR1}, which, trivially, satisfies condition (PH) in Satz 1 of \cite{FR1}). Therefore, by that Satz 1,
each $\widehat{\Cal C}^{\mathrm{GCom\,}A}$-trivial $\Cal O^{\mathrm{GCom\,}A}$-cocycle is ${\Cal O}^{\mathrm{GCom\,}A}$-trivial. \qed

\begin{leer}\label{24.8.16*}{\em Proof of Theorem \ref{24.8.16+}:} Since $A$ and $B$ are locally holomorphically similar at each point of $X$, we can find an open covering $\{U_i\}_{i\in I}$ of $X$ and holomorphic maps $H_i:U_i\to \mathrm{GL}(n,\C)$ such that
\[
B=H_i^{-1}AH_i^{}\quad\text{on}\quad U_i.
\]Then
$H_i^{-1}AH_i^{}=B=H_j^{-1}AH_j^{}$ on $U_i\cap U_j$. Hence $AH_i^{}H_j^{-1}=H_i^{}H_j^{-1}A$ on $U_i\cap U_j$, i.e.,
 the family
 \begin{equation}\label{24.8.16**}\{H_i^{}H^{-1}_j\}_{i,j\in I}
 \end{equation} is an $\Cal O^{\mathrm{GCom\,}A}$-cocycle.

Now, by hypothesis, we have a $\Cal C^\infty$ map $T:X\to \mathrm{GL}(n,\C)$ with $B=T^{-1}AT$ on $X$. Set $c_i=H_iT$ on $U_i$. Then
\[
c_iA=H_iTA=H_iBT=AH_iT=Ah_i,
\]i.e., $h_i\in (\Cal C^\infty)^{\mathrm{Com\,}A}(U_i)$, and
\[
c^{}_ic_j^{-1}=H_i^{}TT^{-1}H_j^{-1}=H_i^{}H_j^{-1} \quad\text{on}\quad U_i\cap U_j.
\]Hence \eqref{24.8.16**} is $(\Cal C^\infty)^{\mathrm{Com\,}A}$-trivial. Since, by Spallek's criterion (Theorem \ref{28.6.16**} (condition (iii)), $(\Cal C^\infty)^{\mathrm{Com\,}A}$ is a subsheaf of $\widehat{\Cal C}^{\mathrm{Com\,}A}$, it follows that \eqref{24.8.16**} is $\widehat{\Cal C}^{\mathrm{Com\,}A}$-trivial. By Proposition \ref{28.6.16-} this means that it is $\Cal O^{\mathrm{GCom\,}A}$-trivial, i.e.,
\[
H_i^{}H_j^{-1}=h_i^{}h_j^{-1}\quad\text{on}\quad U_i\cap U_j,
\]for some family $h_i\in \Cal O^{\mathrm{GCom\,}A}(U_i)$. Then
\[h_i^{-1}H_i^{}=h_j^{-1}H_j^{}\quad\text{on}\quad U_i\cap U_j.\]Hence, there is a well-defined  holomorphic map $H:X\to \mathrm{GL}(n,\C)$ with
$H=H_i^{-1}h_i^{}$ on $U_i$,  and which satisfies $
H^{-1}BH=h_i^{-1}H_i^{}BH_i^{-1}h_i^{}=h_i^{-1}Ah_i^{}=A$. \qed
\end{leer}

\section{Reduction of Theorem \ref{28.6.16*}{}}\label{5.10.16}

Let $X$ be a one-dimensional Stein space.

First note that Theorem \ref{28.6.16*} follows by standard arguments from the following

\begin{thm}\label{23.6.16'} For each holomorphic map $A:X\to \mathrm{Mat}(n\times n,\C)$,
\[ H^1(X,\Cal O^{\mathrm{GCom\,}A})=0.
\]
\end{thm}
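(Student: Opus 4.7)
My plan is to reduce the holomorphic cohomology problem to a weaker $\widehat{\Cal C}$-valued problem via the Oka principle of Proposition~\ref{28.6.16-}, and then to establish the weaker vanishing using the one-dimensionality of $X$. Concretely, since $\Cal O^{\mathrm{GCom\,}A} \subseteq \widehat{\Cal C}^{\mathrm{GCom\,}A}$, any $\Cal O^{\mathrm{GCom\,}A}$-cocycle is automatically a $\widehat{\Cal C}^{\mathrm{GCom\,}A}$-cocycle, and Proposition~\ref{28.6.16-} upgrades a $\widehat{\Cal C}^{\mathrm{GCom\,}A}$-trivialization to an $\Cal O^{\mathrm{GCom\,}A}$-trivialization. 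Hence the theorem is equivalent to the vanishing $H^1(X,\widehat{\Cal C}^{\mathrm{GCom\,}A}) = 0$, and the holomorphic difficulty is entirely absorbed by the Forster--Ramspott machinery already cited.

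To prove this vanishing I would exploit that $\dim X = 1$ forces $\mathrm{Sing}(X)$ to be a discrete subset, with $X\setminus \mathrm{Sing}(X)$ a disjoint union of non-compact Riemann surfaces on which $\widehat{\Cal C}^{\mathrm{GCom\,}A} = \Cal C^{\mathrm{GCom\,}A}$ by Remark~2 of Definition~\ref{24.8.16''}. Given a $\widehat{\Cal C}^{\mathrm{GCom\,}A}$-cocycle $\{f_{ij}\}$, I would refine its covering so that each singular point lies in a single small Stein open set disjoint from the others, and so that every triple intersection is empty on the smooth locus (possible since the nerve can be chosen one-dimensional). On each neighborhood of a singular point $\xi$ the cocycle can be trivialized by a local holomorphic section — which automatically satisfies the pointwise extension condition \eqref{19.8.16} at $\xi$ — and where continuous-only data appears, Spallek's criterion (Theorem~\ref{28.6.16**}(iii)) allows us to stay inside $\widehat{\Cal C}^{\mathrm{GCom\,}A}$. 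On the smooth locus the connectivity of each fibre $\mathrm{GCom\,}A(z)$ (Lemma~\ref{19.1.16''}) together with the homotopy-one-dimensionality of non-compact Riemann surfaces should make continuous cocycles into fibrewise-connected families trivial. An exhaustion of $X$ by relatively compact Stein subsets, combined with a Mittag-Leffler patching in the $\widehat{\Cal C}^{\mathrm{GCom\,}A}$-topology, glues the local trivializations into a global $0$-cochain, which is then promoted to a holomorphic one by the Oka step above.

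The main obstacle is the failure of $\mathrm{GCom\,}A$ to be a locally trivial bundle of groups, as Example~\ref{24.8.16'} already shows; classical Oka--Grauert theory for principal $G$-bundles therefore does not apply directly, and one must really work with $\widehat{\Cal C}^{\mathrm{GCom\,}A}$ together with the point-by-point extension condition it encodes. What rescues the argument is the one-dimensionality of $X$: it makes $\mathrm{Sing}(X)$ discrete, reduces the \v{C}ech nerve to dimension one, and confines the non-locally-trivial behavior of the fibres to isolated points at which one may appeal to local holomorphic data supplied by the $\Cal O^{\mathrm{GCom\,}A}$-cocycle and to Spallek's criterion. I expect the bulk of the technical work to live in the second paragraph above — essentially a Cousin~I / Mittag-Leffler construction adapted to the sheaf $\widehat{\Cal C}^{\mathrm{GCom\,}A}$ — and this is presumably what occupies the sections of the paper following the present reduction.
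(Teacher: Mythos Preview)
Your overall architecture matches the paper exactly: reduce via Proposition~\ref{28.6.16-} to the vanishing $H^1(X,\widehat{\Cal C}^{\mathrm{GCom\,}A})=0$ (the paper's Theorem~\ref{4.10.16}), then handle that topologically using $\dim X=1$, discreteness of $\mathrm{Sing}(X)$, passage to the normalization, and an exhaustion/patching argument on the resulting Riemann surface. The paper carries this out in Sections~6--9.

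There is, however, a genuine gap in your sketch of the topological step. You write that on the smooth locus ``the connectivity of each fibre $\mathrm{GCom\,}A(z)$ together with the homotopy-one-dimensionality of non-compact Riemann surfaces should make continuous cocycles \ldots\ trivial,'' and later that one-dimensionality ``confines the non-locally-trivial behavior of the fibres to isolated points.'' But you give no mechanism for the second claim, and the first claim is false without it: fibrewise connectedness of a family of groups does \emph{not} imply that the group of continuous sections is connected, nor that $H^1$ vanishes, unless the family is locally trivial (or at least admits local path-lifting). Example~\ref{24.8.16'} takes place on $\C$, so smoothness of $X$ alone buys you nothing here; you are conflating $\mathrm{Sing}(X)$ with the locus where $\mathrm{GCom\,}A$ fails to be locally trivial, and these are unrelated.

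The missing ingredient is the paper's notion of \emph{Jordan stable points} (Definition~\ref{8.6.16n'}). Proposition~\ref{11.6.16} shows that on a Riemann surface the non-Jordan-stable points form a discrete closed set, and Lemma~\ref{26.1.16''} shows that at Jordan stable points the family $\mathrm{Com\,}A$ is genuinely locally trivial \emph{as a bundle of algebras}. This is what makes Corollary~\ref{9.8.16+} work: on contractible open sets of Jordan stable points, $\Cal C^{\mathrm{GCom\,}A}(W)$ is connected, and only then can the bump factorization (Lemma~\ref{23.5.16'}) and the Mittag-Leffler patching (Lemma~\ref{4.5.16}) go through. The paper packages this by working with the auxiliary sheaf $\Cal C^{Z,\mathrm{GCom\,}A}$ of sections equal to $I$ near the combined bad set $Z$ (non-smooth points of $X$ together with non-Jordan-stable points of $A$), proving $H^1(X,\Cal C^{Z,\mathrm{GCom\,}A})=0$ first (Theorem~\ref{17.8.16--}), and then observing this sheaf sits inside $\widehat{\Cal C}^{\mathrm{GCom\,}A}$. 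Your proposal needs this or an equivalent device; without it the step ``continuous cocycles are trivial on the smooth locus'' is unsupported.
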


Indeed, assume Theorem \ref{23.6.16'} is already proved and $A, B$ are as in Theroem \ref{28.6.16*}. Then we can find an open covering $\{U_i\}_{i\in I}$ of $X$ and holomorphic  maps $H_i:U_i\to\mathrm{GL}(n,\C)$, $i\in I$, such that
\begin{equation}\label{28.6.16}B=H_i^{-1}AH_i^{}\quad\text{on}\quad U_i.
\end{equation} It follows that $AH_i^{}H_j^{-1}=H_i^{}H_j^{-1}A$ on $U_i\cap U_j$. Hence, $\{H_i^{}H_j^{-1}\}^{}_{i,j\in I}$ is an  $\Cal O^{\mathrm{GCom\,}A}$-cocycle. By Theorem \ref{23.6.16'}, this cocycle is $\Cal O^{\mathrm{GCom\,}A}$-trivial, i.e., there is a family $h_i\in\Cal O^{\mathrm{GCom\,}A}(U_i)$ with $H_i^{}H_j^{-1}=h_i^{}h_j^{-1}$ on $U_i\cap U_j$. Therefore $h_i^{-1}H_i^{}=h_j^{-1}H_j^{}$ on $U_i\cap U_j$.
Hence, there is a well-defined global holomorphic  map $H:X\to\mathrm{GL}(n,\C)$ with $H=h_i^{-1}H_i^{}$ on $U_i$ for all $i\in I$.
 From  \eqref{28.6.16} and the relations $h_i^{}Ah_i^{-1}=A$ it follows  that $H^{-1}AH=B$ on $X$.
\qed

To prove Theorem \ref{23.6.16'}, by Proposition \ref{28.6.16-}, it is sufficient to prove that each ${\Cal O}^{\mathrm{GCom\,}A}$-cocycle is $\widehat{\Cal C}^{\mathrm{GCom\,}A}$-trivial. Sections 6 - 9 are devoted to the proof of this fact, as a special case of the following stronger result.

\begin{thm}\label{4.10.16} For each holomorphic map $A:X\to \mathrm{Mat}(n\times n,\C)$,
\[ H^1(X,\widehat{\Cal C}^{\mathrm{GCom\,}A})=0.
\]
\end{thm}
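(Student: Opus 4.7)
The plan is to prove Theorem~\ref{4.10.16} by an exhaustion of $X$ by Runge--Stein subdomains, with an inductive Cartan-splitting argument that leans crucially on the one-dimensionality of $X$. Since $\dim X = 1$, any open covering of $X$ admits a locally finite Stein refinement whose triple intersections are empty; after passing to such a refinement, the cocycle condition is vacuous, and the task reduces to solving $f_{ij} = h_i h_j^{-1}$ on $U_i \cap U_j$ for sections $h_i \in \widehat{\Cal C}^{\mathrm{GCom\,}A}(U_i)$. I would fix a Stein exhaustion $X_1 \subset\subset X_2 \subset\subset \cdots$ of $X$ with $(X_n, X_{n+1})$ a Runge pair, and build the $h_i$ inductively over $X_n$, arranging by a Mittag-Leffler procedure that the partial solutions converge to a global trivialization.

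The inductive step reduces to the following non-abelian Cartan splitting: given a $\widehat{\Cal C}^{\mathrm{GCom\,}A}$-section $f$ on a neighbourhood of $K \cap L$ for a Cartan pair $(K,L)$ with $K \cup L \supseteq X_{n+1}$, factor $f = g_K g_L^{-1}$ where $g_K,g_L$ are sections of $\widehat{\Cal C}^{\mathrm{GCom\,}A}$ over $K$ and $L$ respectively. By the connectedness of the fibre $\mathrm{GCom\,}A(z)$ (Lemma~\ref{19.1.16''}), sufficient refinement allows me to assume $f$ lies in the image of the exponential, so that $\log f$ makes sense as a section of $\widehat{\Cal C}^{\mathrm{Com\,}A}$. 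The splitting is then obtained additively and iterated to correct for the non-commutativity of $\log$, with geometric convergence provided one controls the initial $\Cal C^0$-size at each stage.

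The additive splitting itself would be handled by a $\Cal C^\infty$ partition of unity $\{\chi_K, \chi_L\}$ subordinate to $\{K,L\}$: the sections $\chi_L \log f$ and $-\chi_K \log f$ are $\Cal C^\infty$ with values in $\mathrm{Com\,}A$, hence lie in $(\Cal C^\infty)^{\mathrm{Com\,}A} \subseteq \widehat{\Cal C}^{\mathrm{Com\,}A}$ by Spallek's criterion (Remark~1 in Definition~\ref{24.8.16''}). To carry the induction through, I would approximate in the Runge sense, using that $\Cal O^{\mathrm{Com\,}A}$ is a coherent $\Cal O$-submodule of $\Cal O^{\mathrm{Mat}(n\times n,\C)}$ (it is the kernel of the morphism $\Theta \mapsto A\Theta - \Theta A$), so that Cartan's Theorem~B on the Stein pair $X_n \subset X_{n+1}$ delivers the required holomorphic approximation of the corrections.

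The main obstacle I anticipate is preserving the ``hatted'' condition -- existence of a local holomorphic representative at each point -- throughout the splitting and approximation. A naive continuous partition of unity would destroy it; what saves the day is that the partition of unity is $\Cal C^\infty$ and Spallek's criterion upgrades smoothness to the $\widehat{\Cal C}$ condition. A secondary delicate point is that, by Example~\ref{24.8.16'}, the family $\mathrm{GCom\,}A$ need not even be topologically locally trivial, so no Grauert-type Oka principle for a fixed complex Lie group is available; all multiplicative steps must be pushed through locally and then patched, leaning on fibrewise connectedness together with the coherence of $\Cal O^{\mathrm{Com\,}A}$ to transport everything into the well-behaved world of coherent analytic sheaves on a one-dimensional Stein space.
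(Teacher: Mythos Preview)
Your outline has a genuine gap at the logarithm step. You claim that ``by the connectedness of the fibre $\mathrm{GCom\,}A(z)$, sufficient refinement allows me to assume $f$ lies in the image of the exponential''; but connectedness of each fibre does \emph{not} imply that a continuous section $f$ over $K\cap L$ can be joined to $I$ inside $\widehat{\Cal C}^{\mathrm{GCom\,}A}(K\cap L)$, because the family $\mathrm{GCom\,}A$ is in general not even topologically locally trivial (as you yourself note via Example~\ref{24.8.16'}). The paper devotes Sections~6--8 to exactly this issue: away from the discrete set of non--Jordan-stable points (Proposition~\ref{11.6.16}), the family $\mathrm{Com\,}A$ \emph{is} locally trivialized as a bundle of algebras (Lemma~\ref{26.1.16''}, Theorem~\ref{23.7.16}), and only then does one obtain the needed path in the section group (Corollary~\ref{9.8.16+}). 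Without invoking Jordan stability, your reduction to the additive problem is unjustified. A smaller but related error: $\chi_L\log f$ is \emph{not} $\Cal C^\infty$, since $f$ is merely continuous; Spallek's criterion is irrelevant here. (The conclusion $\chi_L\log f\in\widehat{\Cal C}^{\mathrm{Com\,}A}$ happens to survive, by the trivial observation that a scalar multiple of a local holomorphic representative is still one, but your stated reason is wrong.)

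There is a second gap in the Mittag--Leffler step: Runge approximation via Cartan~B for the coherent sheaf $\Cal O^{\mathrm{Com\,}A}$ approximates \emph{holomorphic} sections, whereas the corrections you need to approximate are only in $\widehat{\Cal C}^{\mathrm{GCom\,}A}$; it is not clear how to bridge this. The paper sidesteps both difficulties by a quite different route. It introduces the auxiliary sheaf $\Cal C^{Z,\,\mathrm{GCom\,}A}$ of continuous sections that are $\equiv I$ near a discrete set $Z$, proves $H^1(X,\Cal C^{Z,\,\mathrm{GCom\,}A})=0$ purely topologically (normalize to a Riemann surface, then use the bump exhaustion of Section~7 together with the splitting Lemma~\ref{23.5.16'}, whose key property \eqref{9.8.16'} makes the new trivialization \emph{equal} to the old one on previously covered compacts, so no approximation is needed), and finally observes that $\Cal C^{S,\,\mathrm{GCom\,}A}\subseteq\widehat{\Cal C}^{\mathrm{GCom\,}A}$ via Smith's criterion at smooth points. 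If you want to push through your approach, you must at minimum arrange the overlaps to miss the non--Jordan-stable locus, invoke Corollary~\ref{9.8.16+} to get the section group connected, and replace the Runge step by an exact-extension mechanism as in Lemma~\ref{23.5.16'}.
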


\begin{rem}\label{3.9.16**} Theorem \ref{23.6.16'} contains the statement $H^1(X,\Cal O^{\mathrm{GCom\,}\Phi})=0$ for each matrix $\Phi\in \mathrm{Mat}(n\times n,\C)$ and each one-dimensional Stein space $X$.  Since
$\mathrm{GCom\,}(\Phi)$ is connected (Lemma \ref{19.1.16''}), this is a special case of the statement
\begin{equation}\label{27.8.16'}
H^1(X,\Cal O^G)=0
\end{equation}for each connected complex Lie group $G$ and each one-dimensional Stein space $X$.
If $X$ is smooth, \eqref{27.8.16'} was proved by H. Grauert \cite[Satz 7]{Gr}. For non-smooth $X$, surprisingly, it seems that there is no explicite  reference for \eqref{27.8.16'} in the literature, except for $G=\mathrm{GL}(n,\C)$, see \cite[Theorem 7.3.1 (c) or Corollary 7.3.2 (1)]{Fc}. Therefore I asked colleagues and got two answers.

F. Forstneri\v c answered  that, by \cite{H}, each one-dimensional Stein space has the homotopy type of a one-dimensional CW complex and, therefore,
\begin{equation}\label{27.8.16''}H^1(X,\Cal C^G)=0,
\end{equation} which then implies \eqref{27.8.16'} by Grauert's Oka priciple \cite[Satz I]{Gr} (see also \cite[7.2.1]{Fc}).

J. Ruppenthal proposed to pass to the normalization of $X$,  which is smooth. At least if $X$ is irreducible and, hence, homeomorphic to its normalization, this immediately reduces the topological statement \eqref{27.8.16''} to the smooth case, which then  implies \eqref{27.8.16'} by Grauert's Oka principle. This idea will be used  in the proof of Theorem \ref{17.8.16--} below.
\end{rem}

\section{Jordan stable points}\label{8.6.16}

\begin{defn}\label{8.6.16n'}
Let  $X$ be a complex space, and $A:X\to \mathrm{Mat}(n\times n,\C)$ a holomorphic map.
A point $\xi\in X$ will be called {\bf Jordan stable} for $A$ if there exist a neighborhood $U$ of $\xi$ and holomorphic functions $\lambda_1,\ldots,\lambda_m:U\to \C$ such that

-- for each $\zeta\in U$, the values $\lambda_1(\zeta),\ldots, \lambda_m(\zeta)$ are the eigenvalues of $A(\zeta)$ and $\lambda_i(\zeta)\not=\lambda_j(\zeta)$ for all $1\le i,j\le m$ with $i\not=j$;

-- for each $1\le j\le m$ and each $1\le k\le n$, the number of Jordan blocks of size $k$ of $\lambda_j(\zeta) $ as an eigenvalue of $A(\zeta)$ is the same for all $\zeta\in U$.
\end{defn}

The following result is known and important for the purpose of the present paper.
\begin{prop}\label{11.6.16} If $X$ is a Riemann surface, then, for each holomorphic $A:X\to \mathrm{Mat}(n\times n,\C)$,  the points in $X$ which are not Jordan stable for $A$ form a discrete and closed subset of $X$.\footnote{Actually,  the following more general fact is true: For each complex space and each holomorphic $A:X\to \mathrm{Mat}(n\times n,\C)$,  the points in $X$ which are not Jordan stable for $A$ form a nowhere dense analytic subset of $X$. I have a proof but no reference for this.  In the smooth case, in \cite[S 3.4]{B3} one can find the statement that this set is {\em contained} in a  nowhere dense analytic subset of $X$. In the  present paper, we do not use this generalization. }
\end{prop}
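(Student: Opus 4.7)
The statement is local in nature, so the plan is to fix an arbitrary point $\xi_0\in X$, work in a coordinate disk $D$ around $\xi_0$ where $\xi_0$ corresponds to $z=0$, and prove that the non-Jordan-stable points in $D$ form a discrete closed subset of $D$. The basic object to analyse is the characteristic polynomial $p(z,\lambda)=\det(\lambda I-A(z))$, whose coefficients are holomorphic in $z$, and the rank functions of $(A(z)-\lambda I)^k$ for the eigenvalues $\lambda$ of $A(z)$ and $k=1,\ldots,n$, which together determine the Jordan invariants.

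The first key step is to invoke Puiseux's theorem: after possibly shrinking $D$, there exists $N\in\N^*$ and a branched cover $\pi\colon\widetilde D\to D$, $\pi(w)=w^N$, such that the lifted polynomial factors as
\[
p(\pi(w),\lambda)=\prod_{i=1}^n\bigl(\lambda-\mu_i(w)\bigr)
\]
with holomorphic $\mu_i\colon\widetilde D\to\C$. For any pair $i\ne j$, $\mu_i-\mu_j$ either vanishes identically or has discrete zero set. Let $T_1\subset\widetilde D$ be the union of the (discrete) zero sets of all non-identically-zero $\mu_i-\mu_j$, and let $I_1,\ldots,I_m$ be the equivalence classes of the relation $\mu_i\equiv\mu_j$; choose a representative $\tilde\lambda_\alpha=\mu_{i_\alpha}$ for each $\alpha$. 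Outside $T_1$, the multiset $\{\mu_1(w),\ldots,\mu_n(w)\}$ consists of the $m$ pairwise distinct values $\tilde\lambda_\alpha(w)$, each occurring with multiplicity $|I_\alpha|$.

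The second step handles the Jordan structure. For each $\alpha$ and each $k=1,\ldots,n$, the matrix $\bigl(A(\pi(w))-\tilde\lambda_\alpha(w)I\bigr)^k$ is holomorphic in $w$, so its rank is lower-semicontinuous and constant on the complement of a discrete analytic subset $T_{\alpha,k}\subset\widetilde D$. Set $T=T_1\cup\bigcup_{\alpha,k}T_{\alpha,k}$; this is again a closed discrete subset of $\widetilde D$. Since $\pi$ is proper and finite-to-one, $S:=\pi(T)$ is closed and discrete in $D$. For any $z_0\in D\setminus(S\cup\{0\})$ we can, after shrinking, pick a holomorphic local inverse $\sigma\colon V\to\widetilde D$ of $\pi$ near $z_0$ (available because $\pi$ is a local biholomorphism off $0$). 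Then $\lambda_\alpha(z):=\tilde\lambda_\alpha(\sigma(z))$, $\alpha=1,\ldots,m$, are holomorphic on $V$, give the pairwise distinct eigenvalues of $A(z)$, and the ranks of $\bigl(A(z)-\lambda_\alpha(z)I\bigr)^k$ are constant on $V$; since these ranks determine the number of Jordan blocks of each size for each eigenvalue, $z_0$ is Jordan stable. Hence the non-Jordan-stable points of $D$ lie in the discrete closed set $S\cup\{\xi_0\}$, and since $\xi_0\in X$ was arbitrary, the non-Jordan-stable set is discrete and closed in $X$.

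The main obstacle is arranging the eigenvalues to be honest holomorphic functions, which is precisely what Puiseux's theorem buys us on the branched cover. Once this is in hand, the rest is a standard application of the fact that on a Riemann surface a non-trivial holomorphic function, or a rank-drop locus of a holomorphic matrix function, has only discrete zeros, and that the image of a discrete closed set under a proper finite-to-one holomorphic map remains discrete and closed.
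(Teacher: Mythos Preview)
Your argument is correct. Note that the paper does not actually prove this proposition; it simply cites Baumg\"artel \cite[Ch.~V, \S 7]{B1} and moves on. What you have supplied is a self-contained proof: Puiseux factorization of the characteristic polynomial on a branched cover $\pi\colon w\mapsto w^N$ makes the eigenvalues into genuine holomorphic functions $\mu_i$; the coincidence loci $\{\mu_i=\mu_j\}$ and the rank-drop loci of the holomorphic families $(A\circ\pi-\tilde\lambda_\alpha I)^k$ are then discrete by the identity principle, and pushing this discrete set down through the proper finite map $\pi$ (and adding the branch point) gives the discrete closed exceptional set. The linear-algebra fact that the sequence of ranks of $(A-\lambda I)^k$ determines the Jordan block sizes for $\lambda$ is exactly what is needed to verify the second clause of Definition~\ref{8.6.16n'}. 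This is essentially how one would reconstruct Baumg\"artel's result by hand, and it has the advantage of being entirely elementary and local.
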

 This was proved by H. Baumg\"artel \cite[Ch.V, \S 7]{B1}.

\begin{lem}\label{26.8.16} Let $\Phi, \widetilde\Phi,\Psi,\widetilde\Psi\in \mathrm{Mat}(n\times n,\C)$ such that $\Phi$ and $\widetilde\Phi$ are similar, and $\Psi$ and $\widetilde\Psi$ are similar. Then
\begin{equation*}
\dim\Big\{\Theta\in\mathrm{Mat}(n\times n,\C)\;\Big\vert\;\Phi\Theta=\Theta\Psi\Big\}=\dim\Big\{\Theta\in\mathrm{Mat}(n\times n,\C)\;\Big\vert\;\widetilde\Phi\Theta=\Theta\widetilde\Psi\Big\}.
\end{equation*}
\end{lem}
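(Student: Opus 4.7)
The plan is to exhibit an explicit linear isomorphism between the two intertwiner spaces, induced by the similarities between $\Phi$ and $\widetilde\Phi$ and between $\Psi$ and $\widetilde\Psi$.

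First I would fix invertible matrices $S, T \in \mathrm{GL}(n,\C)$ such that $\widetilde\Phi = S^{-1}\Phi S$ and $\widetilde\Psi = T^{-1}\Psi T$, which exist by the similarity hypotheses. Then I would define the map
\[
L : \mathrm{Mat}(n\times n,\C) \to \mathrm{Mat}(n\times n,\C), \qquad L(\Theta) = S^{-1}\Theta T,
\]
which is manifestly $\C$-linear and bijective (its inverse is $\Theta' \mapsto S\Theta' T^{-1}$).

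Next I would verify that $L$ carries the first intertwiner space onto the second. Indeed, if $\Phi\Theta = \Theta\Psi$, then setting $\Theta' = L(\Theta) = S^{-1}\Theta T$ one computes
\[
\widetilde\Phi\,\Theta' = S^{-1}\Phi S \cdot S^{-1}\Theta T = S^{-1}(\Phi\Theta)T = S^{-1}(\Theta\Psi)T = S^{-1}\Theta T \cdot T^{-1}\Psi T = \Theta'\widetilde\Psi.
\]
The converse inclusion is obtained in exactly the same way via $L^{-1}$. Hence $L$ restricts to a linear isomorphism of the two spaces, which yields the claimed equality of dimensions.

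There is essentially no obstacle here; the only thing to be careful about is to use \emph{two} potentially different conjugating matrices $S$ and $T$ (one for $\Phi$, one for $\Psi$) rather than a single one, since $\Phi$ and $\Psi$ play independent roles on the two sides of the intertwining equation.
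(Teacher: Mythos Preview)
Your proof is correct and is essentially identical to the paper's own argument: the paper also fixes conjugating matrices $\Gamma_\Phi,\Gamma_\Psi$ realizing the two similarities and exhibits the map $\Theta\mapsto \Gamma_\Phi^{}\Theta\Gamma_\Psi^{-1}$ as a linear isomorphism between the two intertwiner spaces. The only cosmetic difference is the direction in which the similarities are written, which amounts to replacing your $S,T$ by $\Gamma_\Phi^{-1},\Gamma_\Psi^{-1}$.
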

\begin{proof}Since $\Phi$ and $\widetilde\Phi$ are similar, we have $\Gamma_\Phi,\Gamma_\Psi\in\mathrm{GL}(n,\C)$ with $\Phi=\Gamma^{-1}_\Phi\widetilde \Phi\Gamma_\Phi^{}$ and $\Psi=\Gamma^{-1}_\Psi\widetilde \Psi\Gamma_\Psi^{}$. Then, for each $\Theta\in\mathrm{Mat}(n\times n,\C)$,
\[
\Phi\Theta=\Theta\Psi\Longleftrightarrow \Gamma^{-1}_\Phi\widetilde \Phi\Gamma^{}_\Phi\Theta=\Theta\Gamma^{-1}_\Psi\widetilde\Psi\Gamma^{}_\Psi\Longleftrightarrow\widetilde \Phi\Gamma_\Phi^{} \Theta\Gamma_\Psi^{-1} = \Gamma_\Phi^{}\Theta\Gamma_\Psi^{-1}\widetilde\Psi,
\]which means that the map $\Theta\mapsto \Gamma_\Phi^{}\Theta\Gamma_\Psi^{-1}$ is a linear isomorphism from

\noindent $\Big\{\Theta\in\mathrm{Mat}(n\times n,\C)\;\Big\vert\;\Phi\Theta=\Theta\Psi\Big\}$ onto $\Big\{\Theta\in\mathrm{Mat}(n\times n,\C)\;\Big\vert\;\widetilde\Phi\Theta=\Theta\widetilde\Psi\Big\}$.
Hence the dimension of these spaces are equal.
\end{proof}

\begin{lem}\label{26.1.16''}  Let  $X$ be a complex space, $A:X\to \mathrm{Mat}(n\times n,\C)$  a holomorphic map, and
$\xi\in X$ a Jordan stable point  of $A$. Then there exist a neighborhood $U$ of $\xi$ and a holomorphic map $H:U\to GL(n,\C)$ such that (Def. \ref{6.6.16''})
\begin{equation}\label{27.1.16--}
H(\zeta)^{-1}\big(\mathrm{Com\,}A(\zeta)\big)H(\zeta)=\mathrm{Com\,}A(\xi)\quad\text{for all}\quad \zeta\in U.
\end{equation}
\end{lem}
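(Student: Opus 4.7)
The plan is to reduce the lemma to Wasow's criterion (Theorem~\ref{28.6.16**}(i)) by manufacturing, out of the Jordan-stability data, a holomorphic ``target'' map $A^\sharp(\zeta)$ which is pointwise similar to $A(\zeta)$ on a neighborhood of $\xi$ and whose commutant $\mathrm{Com\,}A^\sharp(\zeta)$ is literally the \emph{same} subset of $\mathrm{Mat}(n\times n,\C)$ for every nearby $\zeta$. A single application of Wasow will then conjugate $A$ holomorphically to $A^\sharp$, and the lemma will follow from \eqref{26.8.16+}.

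With $\lambda_1,\dots,\lambda_m:U\to\C$ and the block sizes as in Definition~\ref{8.6.16n'}, let $d_j$ be the (constant) algebraic multiplicity of $\lambda_j$, and let $N_j\in\mathrm{Mat}(d_j\times d_j,\C)$ be the constant nilpotent matrix in Jordan form with the prescribed block sizes. Set
\[
A^\sharp(\zeta):=\bigoplus_{j=1}^m\bigl(\lambda_j(\zeta)I_{d_j}+N_j\bigr),\qquad\zeta\in U,
\]
which is holomorphic and, by construction, has the same Jordan form as $A(\zeta)$ at every $\zeta\in U$. In particular, $A(\xi)$ and $A^\sharp(\xi)$ are similar, so we fix $\Gamma\in\mathrm{GL}(n,\C)$ with $\Gamma A^\sharp(\xi)=A(\xi)\Gamma$.

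The key observation is that $\mathrm{Com\,}A^\sharp(\zeta)$ does not depend on $\zeta\in U$. Writing $\Theta=(\Theta_{jk})$ in the same block decomposition, $\Theta\in\mathrm{Com\,}A^\sharp(\zeta)$ iff $N_j\Theta_{jj}=\Theta_{jj}N_j$ for each $j$ and $(\lambda_j(\zeta)-\lambda_k(\zeta))\Theta_{jk}=\Theta_{jk}N_k-N_j\Theta_{jk}$ for $j\ne k$. The right-hand side depends on $\Theta_{jk}$ through a nilpotent operator, and the $\lambda_j(\zeta)$ are pairwise distinct on $U$, so the off-diagonal Sylvester equations force $\Theta_{jk}=0$ for $j\ne k$; hence
\[
\mathrm{Com\,}A^\sharp(\zeta)=\bigoplus_{j=1}^m\mathrm{Com\,}N_j
\]
is a fixed subspace of $\mathrm{Mat}(n\times n,\C)$ for all $\zeta\in U$.

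To apply Wasow's criterion to the pair $(A,A^\sharp)$ with the matrix $\Gamma$, it remains to check the dimension condition; this is immediate from Lemma~\ref{26.8.16} (with its $\Phi,\widetilde\Phi,\Psi,\widetilde\Psi$ taken to be $A(\zeta),A^\sharp(\zeta),A^\sharp(\zeta),A^\sharp(\zeta)$; the first pair is similar because $A(\zeta)$ and $A^\sharp(\zeta)$ share their Jordan form, the second pair trivially so), since the relevant dimension then equals $\dim\mathrm{Com\,}A^\sharp(\zeta)$, constant in $\zeta$ by the previous paragraph. Theorem~\ref{28.6.16**}(i) applied with $B:=A^\sharp$ therefore yields, after shrinking $U$, a holomorphic $H:U\to\mathrm{GL}(n,\C)$ with $H(\xi)=\Gamma$ and $HA^\sharp=AH$ on $U$; hence $H^{-1}AH=A^\sharp$, and \eqref{26.8.16+} gives
\[
H(\zeta)^{-1}(\mathrm{Com\,}A(\zeta))H(\zeta)=\mathrm{Com\,}A^\sharp(\zeta)=\Gamma^{-1}(\mathrm{Com\,}A(\xi))\Gamma\quad\text{for all }\zeta\in U,
\]
so $\widetilde H(\zeta):=H(\zeta)\Gamma^{-1}$ will satisfy \eqref{27.1.16--}. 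The one delicate step is the constancy of $\mathrm{Com\,}A^\sharp(\zeta)$ as a \emph{set} and not merely of its dimension; that is where the pairwise distinctness of the $\lambda_j(\zeta)$---a part of Jordan stability one might be tempted to discard---enters decisively, through the Sylvester argument above.
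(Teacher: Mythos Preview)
Your proof is correct and is essentially the same as the paper's: your $A^\sharp$ is the paper's $J$, your $\Gamma$ is the paper's $\Phi$, and both arguments proceed by showing $\mathrm{Com\,}J(\zeta)$ is constant in $\zeta$, invoking Lemma~\ref{26.8.16} to verify the dimension hypothesis of Wasow's criterion, and then conjugating by the resulting $H$ composed with $\Gamma^{-1}$. The only cosmetic difference is that the paper cites \cite[Ch.~VIII, \S1]{Ga} for the constancy of the commutant, whereas you spell out the Sylvester/nilpotency argument directly.
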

\begin{proof}
Let $U$ and $\lambda_j$ be as in Definition \ref{8.6.16n'}. By hypothesis, for each $1\le j\le m$ and each $1\le \ell\le m$, the number of Jordan blocks of size $\ell$ of $\lambda_j(\zeta)$ does not depend on $\zeta\in U$; we denote it by $\beta(j,\ell)$ ($=0$ if there is no such block). Set
\begin{align*}&k_j=\sum_{\ell=1}^n\beta(j,\ell) \quad(=\text{ the algebraic multiplicity of }\lambda_j(\zeta)\text{ for all }\zeta\in U)\quad\text{and}\\
&N_{\ell}=\big(\delta_{\mu,\nu-1}\big)_{\mu,\nu=1}^{\ell}\quad(\delta_{\mu\nu}=\;\text{Kronecker symbol},\;\mu=\;\text{row index}),
\end{align*}and let $M_j$ be a $k_j\times k_j$ matrix which can be written as a block diagonal where, for each $1\le\ell\le n$ with $\beta(j,\ell)\not=0$, precisely $\beta(j,\ell)$ of the blocks on the diagonal are equal to $N_{\ell}$.
Denote by $J(\zeta)$  the $n\times n$ matrix defined by the block diagonal matrix with
\[
\lambda_1(\zeta)I_{k_1}+M_{1}\;,\;\ldots\;,\;\lambda_m(\zeta)I_{k_m}+M_{m}
\]on the diagonals.
Then, for each $\zeta\in U$, $J(\zeta)$ is a Jordan normal form of $A(\zeta)$. Let us fix some  $\Phi\in \mathrm{GL}(n,\C) $ with
\begin{equation}\label{27.8.16neu}A(\xi)=\Phi J(\xi)\Phi^{-1}.
\end{equation}
Since $\lambda_i(\zeta)\not=\lambda_j(\zeta)$ if $i\not=j$,  a matrix $\Theta\in\mathrm{Mat}(n\times n,\C)$ belongs to $\mathrm{Com\,} J(\zeta)$ if and only if $\Theta$ is a block diagonal matrix with matrices
\[Z_1\in \mathrm{Com\,}\big(\lambda_1(\zeta)I_{k_1}+M_{1}\big)\;,\;\ldots\;,\;Z_m\in \mathrm{Com\,}\big(\lambda_m(\zeta)I_{k_m}+M_{m}\big)\]
on the diagonal \cite[Ch. VIII, \S 1]{Ga}. Since $\mathrm{Com\,}\big(\lambda_j(\zeta)I_{k_j}+M_{j}\big)=\mathrm{Com\,}M_{j}$, this in particular shows that
\begin{equation}\label{26.8.16''}
\mathrm{Com\,}J(\zeta)=\mathrm{Com\,}J(\xi)\quad\text{for all}\quad \zeta\in U.
\end{equation}
Since $A(\zeta)$ and $J(\zeta)$ are similar for all $\zeta\in U$, this and  Lemma \ref{26.8.16} imply that
\[
\dim\Big\{\Theta\in\mathrm{Mat}(n\times n,\C)\;\Big\vert\;A(\zeta)\Theta=\Theta J(\zeta)\Big\}=\dim\mathrm{Com\,}J(\zeta)=\dim\mathrm{Com\,}J(\xi)
\]does not depend on $\zeta\in U$. Therefore, by
\eqref{27.8.16neu} and by Theorem \ref{28.6.16**} (condition (i)),  after shrinking $U$, we can find a holomorphic $h:U\to \mathrm{Mat}(n\times n,\C)$ with $h(\xi)= \Phi$ and $A(\zeta)h(\zeta)=h(\zeta)J(\zeta)$ for all $\zeta\in U$. Since $\Phi\in \mathrm{GL}(n,\C)$, by a further shrinking of $U$, we can achieve that $h(\zeta)\in \mathrm{GL}(n,\C)$ for all $\zeta\in U$. Then $h(\zeta)^{-1}A(\zeta)h(\zeta)=J(\zeta)$ for all $\zeta\in U$ and, by \eqref{26.8.16+} and again by \eqref{26.8.16''},
\[
h(\zeta)^{-1}\big(\mathrm{Com\,}A(\zeta)\big)h(\zeta)=\mathrm{Com\,} J(\zeta)=\mathrm{Com\,} J(\xi).
\]Since, by \eqref{27.8.16neu} and \eqref{26.8.16+}, $\Phi\big(\mathrm{Com\,}J(\xi)\big)\Phi^{-1}=\mathrm{Com\,}A(\xi)$, now $H(\zeta):=h(\zeta)\Phi^{-1}$, $\zeta\in U$, has the required properties.
\end{proof}

\iffalse
\begin{defn}\label{9.6.16'} For $\Phi\in \mathrm{Mat}(n\times n,\C)$, let
\begin{equation}\label{9.6.16-}
\mathrm{GGCom\,}\Phi:=\Big\{\Theta\in \mathrm{GL}(n,\C)\;\Big\vert\; \Theta^{-1}\big(\mathrm{Com\,}\Phi\big) \Theta\subseteq \mathrm{Com\,}\Phi\Big\}.
\end{equation} Since, for each $\Theta\in \mathrm{GL}(n,\C)$, the map $\Psi\mapsto \Theta^{-1}\Psi \Theta$ is a linear automorphism of the (finite dimensional) complex vector space $\mathrm{Mat}(n\times n,\C)$, and $\mathrm{Com\,}\Phi$ is a subspace $\mathrm{Mat}(n\times n,\C)$, then we have
\begin{equation}\label{9.6.16--}
\mathrm{GGCom\,}\Phi=\Big\{\Theta\in \mathrm{GL}(n,\C)\;\Big\vert\; \Theta^{-1}\big(\mathrm{Com\,}\Phi\big) \Theta= \mathrm{Com\,}\Phi\Big\}.
\end{equation}  This shows that $\mathrm{GGCom}(\Phi)$ is a subgroup of $\mathrm{GL}(n,\C)$, which we call the
{\bf second commutator group of} $\Phi$. Clearly,
\begin{equation}\label{12.6.16n''}
\mathrm{GCom\,}\Phi\subseteq \mathrm{GGCom\,}\Phi.
\end{equation}
\end{defn}
\fi

\begin{thm}\label{23.7.16}Let  $X$ be a complex space, and $A:X\to \mathrm{Mat}(n\times n,\C)$ holomorphic. Let $W\subseteq X$ be an open set, which is contractible and such that all points of $W$ are Jordan stable for $A$, and let $\xi\in W$. Then there exists a continuous map
$T:W\to \mathrm{GL}(n,\C)$  such that (Def. \ref{6.6.16''})
\begin{equation}\label{27.1.16--}
T^{-1}(\zeta)\big(\mathrm{Com\,}A(\zeta)\big)T(\zeta)=\mathrm{Com\,}A(\xi)\quad\text{for all}\quad \zeta\in W.
\end{equation} If, moreover, $W$ is Stein, then this $T$ can be chosen holomorphically on $W$.
\end{thm}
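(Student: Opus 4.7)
The plan is to build $T$ from the local data of Lemma \ref{26.1.16''} and to reformulate the patching obstruction as a cocycle in a normalizer group, which is killed by contractibility of $W$ (in the continuous case) and by Grauert's Oka principle (in the Stein case).

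First I would pick an open covering $\{U_i\}_{i\in I}$ of $W$ and, for each $i$, a reference point $\eta_i\in U_i$ together with a holomorphic map $H_i:U_i\to\mathrm{GL}(n,\C)$ supplied by Lemma \ref{26.1.16''}, so that
\[
H_i(\zeta)^{-1}\bigl(\mathrm{Com\,}A(\zeta)\bigr)H_i(\zeta)=\mathrm{Com\,}A(\eta_i)\qquad\text{for all }\zeta\in U_i.
\]
Since every point of $W$ is Jordan stable and $W$ is connected (being contractible), the Jordan structure of $A(\zeta)$ (the block sizes grouped by distinct eigenvalues) is locally constant on $W$, and hence constant. Thus each $\mathrm{Com\,}A(\eta_i)$ is conjugate in $\mathrm{GL}(n,\C)$ to $\mathrm{Com\,}A(\xi)$, and I may choose a constant $K_i\in\mathrm{GL}(n,\C)$ with $K_i^{-1}\mathrm{Com\,}A(\eta_i)K_i=\mathrm{Com\,}A(\xi)$. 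Replacing $H_i$ by $H_iK_i$, I may assume
\[
H_i(\zeta)^{-1}\bigl(\mathrm{Com\,}A(\zeta)\bigr)H_i(\zeta)=\mathrm{Com\,}A(\xi)\qquad\text{for all }\zeta\in U_i,\ i\in I.
\]

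Next, let
\[
N:=\bigl\{g\in\mathrm{GL}(n,\C)\mid g\,\mathrm{Com\,}A(\xi)\,g^{-1}=\mathrm{Com\,}A(\xi)\bigr\}
\]
be the normalizer of $\mathrm{Com\,}A(\xi)$ in $\mathrm{GL}(n,\C)$; it is a closed complex algebraic subgroup of $\mathrm{GL}(n,\C)$, and hence a complex Lie group with finitely many connected components. A direct computation shows that $g_{ij}:=H_i^{-1}H_j\in\Cal O^N(U_i\cap U_j)$ and that $\{g_{ij}\}$ is an $\Cal O^N$-cocycle. Any trivialization $g_{ij}=n_in_j^{-1}$ with sections $n_i$ of $N$ over $U_i$ (of the required regularity class $\Cal F\in\{\Cal C,\Cal O\}$) produces the desired map $T$ of class $\Cal F$ by setting $T:=H_in_i$ on $U_i$: the local pieces agree on overlaps, and the computation $T(\zeta)^{-1}\mathrm{Com\,}A(\zeta)T(\zeta)=n_i(\zeta)^{-1}\mathrm{Com\,}A(\xi)n_i(\zeta)=\mathrm{Com\,}A(\xi)$ yields the required identity.

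It remains to trivialize $\{g_{ij}\}$. Since $W$ is paracompact and contractible, the topological classification of principal bundles under a Lie group gives $H^1(W,\Cal C^N)=0$, which settles the continuous case. If $W$ is also Stein, Grauert's Oka principle for principal bundles under arbitrary complex Lie groups (see \cite{Gr} and \cite{Fc}) identifies $H^1(W,\Cal O^N)$ with $H^1(W,\Cal C^N)$, yielding a holomorphic $T$. I expect the most delicate step to be this last one, since $N$ is typically not connected; a safe alternative is to apply Grauert's classical result to the identity component $N_0$ (a connected complex Lie group) and then handle the finite quotient $N/N_0$ by a direct argument using that principal bundles for discrete groups over a contractible paracompact space are trivial.
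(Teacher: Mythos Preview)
Your argument is correct and follows essentially the same route as the paper: apply Lemma~\ref{26.1.16''} locally, normalize the reference point to $\xi$ (the paper does this in one line ``since $W$ is connected''; your $K_i$-step makes it explicit), form the cocycle in the normalizer of $\mathrm{Com\,}A(\xi)$, kill it continuously by contractibility \cite[Corollary 11.6]{St}, and upgrade to holomorphic via Grauert. Your caution about disconnectedness of $N$ is unneeded here: the paper invokes \cite[Satz~6]{Gr}, valid for arbitrary complex Lie groups, and since the continuous cocycle is already trivial on the contractible $W$, the Oka principle gives holomorphic triviality directly without passing to $N_0$.
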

\begin{proof} Since $W$ is connected, by Lemma \ref{26.1.16''}, for each $\eta\in W$, we can find a neighborhood $U_\eta\subseteq W$ of $\eta$
and a holomorphic map $H_\eta:U_\eta\to GL(n,\C)$ such that
\begin{equation}\label{27.8.16}H_\eta^{-1}(\zeta)\big(\mathrm{Com\,}A(\zeta)\big)H_\eta^{}(\zeta)=\mathrm{Com\,}A(\xi)\quad\text{for all}\quad\zeta\in U_\eta.
\end{equation}
Denote by $G$ the normalizer of $\mathrm{GCom\,}A(\xi)$ in $\mathrm{GL}(n,\C)$, i.e., the complex Lie group of all $\Phi\in\mathrm{GL}(n,\C)$ with
$\Phi^{-1}\big(\mathrm{GCom\,}A(\xi)\big)\Phi=\mathrm{GCom\,}A(\xi)$.
Then, by \eqref{27.8.16},
\begin{equation*}
H^{-1}_\eta(\zeta)H^{}_\tau(\zeta)\in G\quad\text{for all}\quad \zeta\in U_\eta\cap U_\tau\quad\text{for all}\quad\eta,\tau\in W,
\end{equation*} i.e., the family $\{H^{-1}_\eta H^{}_\tau\}_{\eta\tau}$ is a $\Cal O^G$-cocycle with the covering $\{U_\eta\}_{\eta\in W}$. Since $W$ is contractible, this cocycle is $\Cal C^G$-trivial \cite[Corollary 11.6]{St},
i.e., there is a family $T_\eta\in \Cal C^G(U_\eta)$ such that $T^{-1}_\eta T^{}_\tau=H_\eta^{}H_\tau^{-1}$ on $U_\eta\cap U_\tau$. Then
\[
H_\eta^{}T^{}_\eta=H_\tau^{}T^{}_\tau\quad\text{on}\quad U_\eta\cap U_\tau,\quad \eta,\tau\in W.
\]Hence, there is a well-defined continuous map $T:X\to \mathrm{GL}(n,\C)$ with
\[
T=H_\eta^{}T^{}_\eta\quad\text{on}\quad U_\eta,\quad \eta\in W.
\]By \eqref{27.8.16} and since $T_\eta(\zeta)\in G$, this $T$ satisfies \eqref{27.1.16--}.

If $W$ is Stein, then by Grauert's Oka \cite[Satz 6]{Gr} the maps $T_\eta$ can be chosen to be holomorphic, which implies that  $T$ is holomorphic.
\end{proof}
\begin{cor}\label{9.8.16+}Under the hypotheses of Theorem \ref{23.7.16}, the group $\Cal C^{\mathrm{GCom\,} A}(W)$ (Def. \ref{6.6.16''}), endowed with   the topology of uniform convergence on the compact subsets of $X$, is connected.
\end{cor}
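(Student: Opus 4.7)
The plan is to use the continuous trivialization $T:W\to \mathrm{GL}(n,\C)$ provided by Theorem~\ref{23.7.16} to identify $\Cal C^{\mathrm{GCom\,}A}(W)$ with the mapping space into the single complex Lie group $G:=\mathrm{GCom\,}A(\xi)$, and then combine contractibility of $W$ with the connectedness of $G$ from Lemma~\ref{19.1.16''}.

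Concretely, the first step is to check that conjugation $\Psi(f):=T^{-1}fT$ defines a group isomorphism from $\Cal C^{\mathrm{GCom\,}A}(W)$ onto $\Cal C(W,G)$: by \eqref{27.1.16--} and \eqref{17.8.16+} one has $T^{-1}(\zeta)\mathrm{GCom\,}A(\zeta)T(\zeta)=G$ pointwise, so $\Psi$ and its inverse are well-defined. Since $T$ and $T^{-1}$ are continuous on $W$ and hence uniformly bounded on each compact subset, $\Psi$ is a homeomorphism when both sides carry the topology of uniform convergence on compact subsets of $X$. It therefore suffices to prove that $\Cal C(W,G)$ is path-connected.

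For this, given $g\in\Cal C(W,G)$, I would fix a contraction $h:W\times[0,1]\to W$ of $W$ to some point $\xi_0\in W$ and consider the family $g_t:=g\circ h(\cdot,t)$, which satisfies $g_0=g$ and $g_1\equiv g(\xi_0)$. On each compact $K\subseteq W$, the map $h$ is uniformly continuous on $K\times[0,1]$ and $g$ is uniformly continuous on the compact image $h(K\times[0,1])\subseteq W$, so $t\mapsto g_t|_K$ is continuous in the sup norm. Thus $t\mapsto g_t$ is a continuous path in $\Cal C(W,G)$ from $g$ to the constant map with value $g(\xi_0)$. Finally, Lemma~\ref{19.1.16''} supplies a continuous path in $G$ from $g(\xi_0)$ to $I$, which, read as a family of constant maps, is a continuous path in $\Cal C(W,G)$. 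Concatenating the two pieces and pulling back through $\Psi^{-1}$ connects an arbitrary $f\in\Cal C^{\mathrm{GCom\,}A}(W)$ to the identity section.

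The main obstacle is purely topological bookkeeping: one must verify that $\Psi$ is a homeomorphism in the uniform-on-compacts topology and that the homotopy $g\circ h(\cdot,t)$ coming from the contraction really induces a continuous path in the mapping space. Both boil down to local compactness of $W$ (as a complex space) together with the fact that images of compacts under continuous maps are compact, so no deep input beyond Theorem~\ref{23.7.16} and Lemma~\ref{19.1.16''} is required.
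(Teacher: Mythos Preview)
Your argument is correct and follows essentially the same route as the paper: conjugation by the trivializing map $T$ from Theorem~\ref{23.7.16} identifies $\Cal C^{\mathrm{GCom\,}A}(W)$ with $\Cal C(W,G)$ as topological groups, contractibility of $W$ connects any map to a constant, and Lemma~\ref{19.1.16''} connects constants to $I$. The paper's proof is merely a terser version of exactly this, omitting the verifications (continuity of $\Psi$ in the compact-open topology, continuity of $t\mapsto g\circ h(\cdot,t)$) that you spell out.
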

\begin{proof} Let $G=\mathrm{GCom\,} A(\xi)$, and let $\Cal C^{G}(W)$ be als also endowed with the topology of uniform convergence on the compact subsets of $W$.
 Since $W$ is contractible, each element of $\Cal C^{G}(X)$ can be connected by a continuous path in $\Cal C^{G}(X)$ with a constant map. Since $G$ is connected (Lemma \ref{19.1.16''}), this implies that $\Cal C^{G}(W)$ is connected. Since, by \eqref{27.1.16--},  $\Cal C^{\mathrm{GCom\,} A}(W)$ and $\Cal C^{G}(W)$ are isomorphic as topological groups, it follows that $\Cal C^{\mathrm{GCom\,} A}(W)$ is connected.
\end{proof}

\begin{rem}\label{17.3.16'}  In particular, the claim of Theorem \ref{23.7.16} is true if $W$ is  a simply connected, non-compact, connected Riemann surface. The question remains open whether the hypothesis on simple connectedness can be omitted?

If the normalizer of $\mathrm{GCom\,}A(\xi)$ is connected, the answer is affirmative. This follows by the same proof, using \cite[Satz 7]{Gr} (saying that $H^1(W,\Cal O^{G})=0$ for each connected complex Lie group $G$ and each non-compact connected Riemann surface $W$) intead of  \cite[Satz 6]{Gr}.
However, this is not always the case. For example,  the normalizer of
$\big(\begin{smallmatrix}1&0\\0&0\end{smallmatrix}\big)$ is
$\big\{\big(\begin{smallmatrix}a&0\\0&d\end{smallmatrix}\big)\;\big\vert\;a,d\in\C^*\big\}\cup \big\{\big(\begin{smallmatrix}0&b\\c&0\end{smallmatrix}\big)\;\big\vert\;b,c\in\C^*\big\}$, which
is not connected.
\end{rem}

\section{Bumps}\label{23.7.16+}

\begin{defn}\label{3.8.16} Let $X$ be a Riemann surface. Denote by $\Delta(r)$, $r>0$,  the closed disk  of radius $r$ centered at the origin in $\C$.
A pair $(E,F) $ will be called a {\bf bump in $X$} if $E$ and $F$ are compact subsets of $X$ such that

(i)  $E\cap F=\emptyset$, or

(ii) there exist $0<r<R<\infty$ and a $\Cal C^\infty$ diffeomorphism, $z$, from an open neighborhood of $F$  onto an open neighborhood of $\Delta(R)$ such that
\begin{align}&\label{9.8.16n}F\subseteq\{\vert z\vert\le R\},\\
&\label{9.8.16n'}E\cap \{\vert z\vert\le R\}\subseteq F,\\
&\label{9.8.16n''} F\cap\{r\le\vert z\vert\le R\}=E\cap \{r\le\vert z\vert\le R\}\not=\{r\le\vert z\vert\le R\}.
\end{align} Moreover, if $\Cal U$ is an open covering of $X$ and $K$ is a subset of $X$, then we say that $(E,F)$
\begin{itemize}
\item [--] {\bf is $\Cal U$-fine} if each $F$ is contained in at least one of the sets of $\Cal U$,
\item[--]  {\bf does not meet $K$} if  $K\cap F=\emptyset$.
\end{itemize}

We say that a  set $M\subseteq X$ is a {\bf bump extension} of $K\subseteq M$ if there exist finitely many bumps $(E_1,F_1),\ldots,(E_m,F_m)$ in $X$ such that $E_1=K$, $E_{\mu+1}=E_\mu\cup F_\mu$ for $1\le \mu\le m-1$ and $E_{m}\cup F_m=M$. Moreover, if $\Cal U$ is an open covering of $X$ and $K$ is a subset of $X$, then we shall say that this bump extension
\begin{itemize}
\item [--] {\bf is $\Cal U$-fine} if each $(E_\mu,F_\mu)$ is $\Cal U$-fine,
\item[--]  {\bf does not meet $K$} if  each $(E_\mu,F_\mu)$ does not meet $K$.
\end{itemize}
\end{defn}

\begin{lem}\label{7.8.16} Let $X$ be a Riemann surface, and let $\rho:X\to \R$ be a $\Cal C^\infty$ function such that, for some real numbers $\alpha<\beta$, the set $\{\rho\le \beta\}$ is compact and $\rho$ has no critical points on $\{\alpha\le \rho\le \beta\}$. Then, for each open covering $\Cal U$ of $X$, $\{\rho\le \beta\}$ is a bump extension of $\{\rho\le \alpha\}$, which is $\Cal U$-fine and does not meet $\{\rho\le \alpha-1\}$.
\end{lem}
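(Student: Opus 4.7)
The plan is to cover the compact strip $\{\alpha \le \rho \le \beta\}$ by finitely many small coordinate disks, each contained in some member of $\Cal U$ and in $\{\rho > \alpha-1\}$, and then to fill in $\{\rho \le \beta\}$ starting from $\{\rho \le \alpha\}$ by adding one disk-shaped piece at a time. Since $d\rho$ is nowhere zero on an open neighborhood of the compact strip, the implicit function theorem on the real $2$-manifold $X$ gives, around any point $p$ of the strip, a $\Cal C^\infty$ diffeomorphism $z_p$ from an open neighborhood $V_p$ of $p$ onto a neighborhood of $\overline{\Delta(R_p)}$ in $\C$ with $\rho = \rho(p) + \Re z_p$ on $V_p$. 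Shrinking $V_p$, I may assume that $\overline{V_p}$ is contained in some member of $\Cal U$ and in $\{\rho > \alpha-1\}$. Compactness of $\{\alpha \le \rho \le \beta\}$ then yields a finite subcover.

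Next I pick a partition $\alpha = t_0 < t_1 < \cdots < t_M = \beta$ and, for each $k$, a finite family $\{(V_{k,j}, z_{k,j})\}_{j=1}^{N_k}$ drawn from the above subcover together with inner radii $r_{k,j} < R_{k,j}$ chosen so that the inner disks $\{|z_{k,j}| \le r_{k,j}\}$ still cover the slice $\{t_{k-1} \le \rho \le t_k\}$. Starting from $E_1 := \{\rho \le \alpha\}$, I process the pairs $(k,j)$ in lexicographic order. At step $\mu$, with data $(z, r, R) := (z_{k,j}, r_{k,j}, R_{k,j})$, I define
\[
F_\mu := \bigl(E_\mu \cap \{r \le |z| \le R\}\bigr) \cup \bigl(\{\rho \le t_k\} \cap \{|z| \le r\}\bigr).
\]
Then $F_\mu \subseteq \{|z| \le R\}$; and $E_\mu \cap \{|z| \le R\} \subseteq F_\mu$ because an easy induction (see below) gives $E_\mu \subseteq \{\rho \le t_k\}$ at this step; and $F_\mu = E_\mu$ on the annulus $\{r \le |z| \le R\}$ by construction. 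Thus $(E_\mu, F_\mu)$ is a bump in the sense of Definition \ref{3.8.16}(ii), and is $\Cal U$-fine and disjoint from $\{\rho \le \alpha-1\}$ by the choice of $V_{k,j}$. By induction on $k$, after all charts of level $k$ are processed, the current set equals $\{\rho \le t_k\}$ (the inner disks of level $k$ cover the slice, so adding them lifts $\{\rho \le t_{k-1}\}$ to $\{\rho \le t_k\}$), and after level $M$ one obtains $E_m \cup F_m = \{\rho \le \beta\}$.

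The main obstacle is the non-triviality clause $F_\mu \cap \{r \le |z| \le R\} \ne \{r \le |z| \le R\}$, which fails only if $E_\mu$ fills the entire outer annulus of the chart currently in use. Since $E_\mu \subseteq \{\rho \le t_k\}$ at step $\mu$, and $\rho = \rho(p_{k,j}) + \Re z$ on the chart, $E_\mu$ restricted to the annulus lies in the half-plane $\{\Re z \le t_k - \rho(p_{k,j})\}$. Centring each $V_{k,j}$ at a point $p_{k,j}$ of the slice $\{t_{k-1} \le \rho \le t_k\}$ (so $t_k - \rho(p_{k,j}) \in [0, t_k - t_{k-1}]$) and taking $R_{k,j}$ strictly larger than the step size $t_k - t_{k-1}$ guarantees the existence of points in the annulus with $\Re z > t_k - \rho(p_{k,j})$, so the non-triviality holds at every step of the induction.
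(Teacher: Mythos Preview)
Your overall strategy is sound and close in spirit to the paper's, but there is a small yet genuine gap. You assert that ``$F_\mu = E_\mu$ on the annulus $\{r \le |z| \le R\}$ by construction'', but this is not true as written: the second piece $\{\rho \le t_k\} \cap \{|z| \le r\}$ meets the boundary circle $\{|z| = r\}$, and there (e.g.\ at the first step of level $k$, where $E_\mu = \{\rho \le t_{k-1}\}$) it contributes points of $\{\rho \le t_k\}\setminus E_\mu$ to $F_\mu$. Hence $F_\mu \cap \{r \le |z| \le R\}$ is strictly larger than $E_\mu \cap \{r \le |z| \le R\}$ and condition \eqref{9.8.16n''} fails for this annulus. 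The repair is immediate: check the bump axioms with an inner radius $r'$ satisfying $r < r' < R$. Then the added piece sits in $\{|z|\le r\}$, hence is disjoint from $\{r'\le |z|\le R\}$, so $F_\mu\cap\{r'\le |z|\le R\} = E_\mu\cap\{r'\le |z|\le R\}$; your non-triviality argument is unaffected since it only uses $R > t_k - t_{k-1}$ (take the real point $z=R$ in the annulus).

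Methodologically, your route differs from the paper's. The paper works locally near each level $t$ and uses a smooth partition of unity $\chi_1,\ldots,\chi_m$ subordinate to charts where $\rho = t + \im z_\mu$, defining $E_\mu = \{\rho - (t_2-t_1)\sum_{\nu<\mu}\chi_\nu \le t_1\}$ and $F_\mu = E_{\mu+1}\cap\{|z_\mu|\le R\}$. Since $\chi_\mu$ is supported in $\{|z_\mu|<R/2\}$, one has $E_\mu = E_{\mu+1}$ on the annulus $\{R/2\le|z_\mu|\le R\}$, so the equality in \eqref{9.8.16n''} is automatic. Your hard-cutoff construction (add closed inner disks one at a time) is more elementary---no partition of unity is needed---and, once the boundary-circle issue above is handled by shifting to $r'$, it gives a cleaner proof of the lemma.
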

\begin{proof} It is sufficient  to prove that, for each $\alpha\le t\le \beta$,
 there exists $\varepsilon>0$ such that, if $t-\varepsilon< t_1\le t\le t_2<t+\varepsilon$, then
$\{\rho\le t_2\}$ is a    bump extension of $\{\rho\le t_1\}$, which is $\Cal U$-fine and does not meet $\{\rho\le \alpha-1\}$.

Let $\alpha\le t\le \beta$ be given.

 Since $\{\rho=t\}$ is compact and $\rho$ has no critical points on $\{\rho=t\}$, then
we can find $\varepsilon_0>0$, open subsets $W_1,\ldots,W_m$ of $X$, $0<R<\infty$, and diffeomorphisms $z_\mu$ from $W_\mu$ onto a neighborhood of $\Delta(R)$  such that $\big\{t-\varepsilon_0\le \rho\le t+\varepsilon_0\}$ is compact and
\begin{itemize}
\item[(a)] $\rho$ has no critical points on $\{t-\varepsilon_0\le \rho\le t+\varepsilon_0\}$;
\item[(b)] for each $1\le \mu\le m$,  $\rho=\im z_\mu+t$ on $W_\mu$;
\item[(c)] $\big\{t-\varepsilon_0\le \rho\le t+\varepsilon_0\}\subseteq \{\vert z_1\vert<R/2\}\cup\ldots\cup \{\vert z_1\vert<R/2\}$;
\item[(d)] each $W_\mu$ is contained in $\{\rho>\alpha-1\}$ and in at least one of the sets of the covering $\Cal U$.
\end{itemize}By (c), we can find $\Cal C^\infty$-functions $\chi_1,\ldots,\chi_m:X\to [0,1]$ such that  $\supp\chi_\mu\subseteq \{\vert z_\mu\vert<R/2\}$ for $1\le \mu\le m$, and
$\sum_{\mu=1}^m\chi_\mu=1$ on $\big\{t-\varepsilon_0\le \rho\le t+\varepsilon_0\}$.
Set $\varepsilon=\min(\varepsilon_0,R/4)$. Then, by (b),
\begin{equation}\label{6.8.16}
\{\rho\le t+\varepsilon\}\cap\{R/2\le \vert z_\mu\vert\le R\}\subseteq \{\im z_\mu\le R/4\}\cap\{R/2\le \vert z_\mu\vert\le R\}.
\end{equation}

Let $t-\varepsilon<t_1\le t\le t_2<t+\varepsilon$ be given. We define
\begin{align*}&E_1=\big\{\rho\le t_1\big\},\\
& E_\mu=\Big\{\rho-(t_2-t_1)\sum_{\nu=1}^{\mu-1}\chi_\nu\le t_1\Big\} \quad\text{for}\quad 2\le \mu\le m+1,\\
& F_\mu=E_{\mu+1}\cap\{\vert z_\mu\vert\le R\}\quad\text{for}\quad 1\le \mu\le m.
\end{align*}

We first prove that, for  each $1\le \mu\le m$, $(E_\mu,F_\mu)$ is a bump in $X$. For that, it is sufficient to show that, for each $1\le \mu\le m$,
\begin{align}&\label{11.8.16}F_\mu\subseteq\{\vert z_\mu\vert\le R\},\\
&\label{11.8.16'}E_\mu\cap \{\vert z_\mu\vert\le R\}\subseteq F_\mu,\\
&\label{11.8.16''} F_\mu\cap\{R/2\le\vert z_\mu\vert\le 1\}=E_\mu\cap \{R/2\le\vert z_\mu\vert\le 1\}\not=\{R/2\le\vert z_\mu\vert\le R\}.
\end{align}By definition of $F_\mu$, \eqref{11.8.16} is trivial. Since $E_\mu\subseteq E_{\mu+1}$, also \eqref{11.8.16'} is clear from the definition of $F_\mu$. Since $\chi_\mu=0$ on $\{R/2\le \vert z_\mu\vert\le R\}$, we have
\[
E_\mu\cap\{R/2\le \vert z_\mu\vert\le R\}=E_{\mu+1}\cap\{R/2\le \vert z_\mu\vert\le R\}.
\] Since, by definition of $F_\mu$,
\[E_{\mu+1}\cap\{R/2\le \vert z_\mu\vert\le R\}=F_\mu \cap \{R/2\le \vert z_\mu\vert\le R\},\] this proves the ``$=$'' in \eqref{11.8.16''}.
Since $E_\mu\subseteq\{\rho\le t_2\}\subseteq \{\rho\le t+\varepsilon\}$, from \eqref{6.8.16} we see
\[
E_\mu\cap\{R/2\le \vert z_\mu\vert\le R\}\subseteq \{\im z_\mu\le R/4\}\cap \{R/2\le \vert z_\mu\vert\le R\}.
\]
In particular, we have  the ``$\not=$'' in \eqref{11.8.16''}.

Since $\chi_\mu=0$ outside $\{\vert z_\mu\vert\le R\}$, we have
\[
E_\mu\setminus \{\vert z_\mu\vert\le R\}=E_{\mu+1}\setminus \{\vert z_\mu\vert\le R\}, \quad 1\le \mu\le m.
\]
Since $F_\mu\setminus\{\vert z_\mu\vert\le 1\}=\emptyset$, this implies that
\[
(E_\mu\cup F_\mu)\setminus\{\vert z_\mu\vert\le R\}=E_{\mu+1}\setminus \{\vert z_\mu\vert\le R\}, \quad 1\le \mu\le m,
\]Moreover, the relations $F_\mu=E_{\mu+1}\cap\{\vert z_\mu\vert \le R\}$ and $E_\mu\subseteq E_{\mu+1}$ imply that
\begin{multline*}
\Big(E_\mu\cup F_\mu\Big)\cap\{\vert z_\mu\vert\le R\}=\Big(E_\mu\cap\{\vert z_\mu\vert\le R\}\Big) \cup \Big(E_{\mu+1}\cap\{\vert z_\mu\vert\le R\}\Big)\\=E_{\mu+1}\cap \{\vert z_\mu\vert\le R\},\quad 1\le \mu\le m.
\end{multline*}Together this yields $E_\mu\cup F_\mu=E_{\mu+1}$ for $1\le \mu\le m$. Hence, $E_{m+1}$ is a  bump extension of $E_1$, which is $\Cal U$-fine and does not meet $\{\rho\le\alpha-1\}$ (by (d)). Since $E_1=\{\rho\le t_1\}$ and $E_{m+1}=\{\rho\le t_2\}$, this completes the proof of the lemma.
\end{proof}

\begin{thm}\label{5.8.16} Let $X$ be a non-compact connected Riemann surface,  and  $\Cal U$  an open covering of $X$. Then there exists a sequence $(B_\mu)_{\mu\in\N}$ of compact subsets of $X$ such that
\begin{itemize}
\item[(a)] for each $\mu\in \N$, $B_\mu$ is contained in at least one of the sets  of  $\Cal U$;
\item[(b)] for each $\mu\in \N^*$, $(B_0\cup\ldots\cup B_{\mu-1},B_{\mu})$ is a bump in $X$;
\item[(c)] $X=\bigcup_{\mu\in\N}B_\mu$.
\item[(d)] for each compact set $K\subseteq X$, there exists $N(\mu)\in\N$ such that $B_\mu\cap K=\emptyset$ if $\mu\ge N(\mu)$.
\end{itemize}
\end{thm}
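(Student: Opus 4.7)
\emph{Plan.} The approach is Morse-theoretic: iterate Lemma~\ref{7.8.16} on intervals between critical values of a suitable exhaustion function on $X$, and handle each critical value by an explicit local construction. Since a non-compact Riemann surface is Stein, $X$ carries a smooth, strictly subharmonic, proper exhaustion $\rho\colon X\to[0,\infty)$. A $\Cal C^2$-small perturbation makes $\rho$ Morse while keeping $\Delta\rho>0$, so no critical point of $\rho$ can have Morse index $2$: only local minima (index $0$) and saddles (index $1$) occur, and the critical values $c_1<c_2<\cdots$ form a discrete subset of $[0,\infty)$ tending to $+\infty$. Pick regular values
\[
\alpha_0<c_1-\delta_1<c_1+\delta_1<\alpha_1<c_2-\delta_2<c_2+\delta_2<\alpha_2<\cdots\to\infty
\]
with $\delta_k>0$ so small that $[c_k-\delta_k,\,c_k+\delta_k]$ contains no critical value other than $c_k$, and fix, around each critical point $p_k$, a Morse chart contained in some element of $\Cal U$.

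Next I assemble $(B_\mu)_{\mu\in\N}$ as the concatenation of finitely many bump chains. First, $\{\rho\le\alpha_0\}$ is covered by finitely many disjoint $\Cal U$-fine disks around the local minima inside it, giving the initial type-(i) bumps. Then, for each $k\ge 1$, the extension from $\{\rho\le\alpha_{k-1}\}$ to $\{\rho\le\alpha_k\}$ is carried out in three sub-steps: (i) from $\{\rho\le\alpha_{k-1}\}$ to $\{\rho\le c_k-\delta_k\}$ by Lemma~\ref{7.8.16}; (ii) across $c_k$ by hand inside the Morse chart: for index~$0$ this is a single type-(i) bump (a small disk disjoint from $\{\rho\le c_k-\delta_k\}$ around $p_k$), while for a saddle the added $1$-handle is swept out by a finite chain of small type-(ii) bumps extending the leading edge of $E$ along the handle, terminated by a single ``closing'' bump $F$ whose chart is chosen so that, on the boundary annulus of the chart, $F$ coincides with $E$ (two thin ``fingers'') rather than filling the whole annulus, meeting condition~(11); (iii) from $\{\rho\le c_k+\delta_k\}$ to $\{\rho\le\alpha_k\}$ again by Lemma~\ref{7.8.16}.

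Properties~(a), (b), (c) of the theorem follow from the construction: $\Cal U$-fineness holds at every step, the bump-extension property holds by construction, and $\bigcup_k\{\rho\le\alpha_k\}=X$. Property~(d) follows from the ``does not meet $\{\rho\le\alpha-1\}$'' clause of Lemma~\ref{7.8.16} together with shrinking each Morse chart to lie in $\{\rho>c_k-1\}$, so that any fixed compact set, being contained in some $\{\rho\le N\}$, meets only the first finitely many bumps. The main technical obstacle is the saddle case in step~(ii): condition~(11) of the bump definition forbids ``filling a hole'' with a single bump, so the chain covering the $1$-handle must be designed carefully, in particular shaping the closing bump $F$ as a tubular neighbourhood of the handle that meets its chart-annulus only along the two incoming $E$-fingers (and not along the entire annulus), inside one element of $\Cal U$.
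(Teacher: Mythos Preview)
Your overall architecture---Morse exhaustion, Lemma~\ref{7.8.16} on regular intervals, special handling at critical levels, and reading off (d) from the ``does not meet $\{\rho\le\alpha-1\}$'' clause---is exactly the paper's. Two points deserve comment.

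\textbf{A minor indexing slip.} You take $\alpha_0<c_1$, but $c_1$ is the global minimum of a proper Morse exhaustion, so $\{\rho\le\alpha_0\}=\emptyset$ and there are no ``local minima inside it'' to build $B_0$ from. The paper instead starts with $B_0=\{\rho\le\rho(\xi_{\min})+\varepsilon_0/2\}$ for small $\varepsilon_0$; you should do the same (or take $\alpha_0$ strictly between $c_1$ and $c_2$).

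\textbf{The saddle crossing.} Here your route and the paper's genuinely diverge. You propose to push a finger along the $1$-handle by a chain of type-(ii) bumps and close with one last bump whose annulus trace consists of the two incoming $E$-arcs. This is plausible, but your sketch leaves two things unverified: first, the precise construction of the closing bump (you must exhibit the chart $z$, check $E\cap\{|z|\le R\}\subseteq F$, and check \eqref{9.8.16n''}); second---and more seriously---after the closing bump the set $E\cup F$ is \emph{not} of the form $\{\rho\le c_k+\delta_k\}$, so it is not clear how you resume Lemma~\ref{7.8.16} in step~(iii). You would need further bumps to round $E\cup F$ off to a genuine sublevel set, and you have not said how.

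The paper avoids all of this with a single trick. Choose a cutoff $\chi$ supported in the Morse chart with $\chi\equiv1$ near the saddle $\xi$, and set $\rho_\pm:=\rho\pm\delta\chi$ for small $\delta>0$. Then $\rho_\pm$ have the same critical points as $\rho$, but $\rho_+(\xi)>t$ and $\rho_-(\xi)<t$, so neither $\rho_+$ nor $\rho_-$ has a critical point at level $t$. Moreover $\{\rho_+\le t_1\}=\{\rho\le t_1\}$ and $\{\rho_-\le t_2\}=\{\rho\le t_2\}$ since $\chi$ is supported in $\{t_1<\rho<t_2\}$. Hence Lemma~\ref{7.8.16} takes $\{\rho\le t_1\}$ to $\{\rho_+\le t\}$ and $\{\rho_-\le t\}$ to $\{\rho\le t_2\}$, and the gap between $\{\rho_+\le t\}$ and $\{\rho_-\le t\}$ is bridged by a \emph{single} bump $(E,F)$ with $E=\{\rho_+\le t\}$ and $F=\{|z|\le R\}\cap\{\rho_-\le t\}$: on the annulus $\rho_+=\rho_-=\rho$, so $E$ and $F$ agree there, and $E\cap\{r\le|z|\le R\}=\{|x|\le|y|\}\cap\{r\le|z|\le R\}$ is visibly not the whole annulus. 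This device lands you exactly on $\{\rho_-\le t\}$, from which Lemma~\ref{7.8.16} can be invoked again without any rounding-off. I recommend you replace your handle-sweeping sketch by this argument.
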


\begin{proof} $X$ is a Stein manifold (see, e.g., \cite[Corollary 26.8]{F}). Therefore (see, e.g., \cite[Theorem 5.1.9]{Ho}), there is a strictly subharmonic function $\rho:X\to\R$ such that, for each $\alpha\in \R$, the set $\{\rho\le \alpha\}$ is compact. By Morse theory (see, e.g., \cite[\S 7 and \S 19, Exercise 19]{GP}), we may assume that all critical points of $\rho$ are non-degenerate, and, for each $\alpha\in\R$, at most one critical point of $\rho$ lies on $\{\rho=\alpha\}$.

In particular, then there is precisely one point in $X$, $\xi_{\mathrm{min}}$, where $\rho$ assumes its absolute minimum and this minimum is strong. Therefore, we can find $\varepsilon_0>0$ such that $\{\rho\le \rho(\xi_{\mathrm{min}})+\varepsilon_0\}$ is contained in at least one of the sets of $\Cal U$, and $\rho$ has no critical points in $\{\rho(\xi_{\mathrm{min}})<\rho\le \rho(\xi_{\mathrm{min}})+\varepsilon_0\}$.
Set $B_0=\{\rho\le \rho(\xi_{\mathrm{min}})+\varepsilon_0/2\}$. Then, by Lemma \ref{7.8.16}, for each $\varepsilon_0/2<\varepsilon\le \varepsilon_0$, the set $\{\rho\le \varepsilon\}$ is a $\Cal U$-fine bump extension of $B_0$. Therefore it sufficient to prove that, for each
 $t\ge\rho(\xi_{\mathrm{min}})+\varepsilon_0 $, the following is true.
 \begin{itemize}
\item [(*)] There exists $\varepsilon>0$ such that, if  $t-\varepsilon<t_1< t<t_2<t+\varepsilon$, then
$\{\rho\le t_2\}$ is a  bump extension of $\{\rho\le t_1\}$, which is $\Cal U$-fine and does not meet $\{\rho\le t_1-1\}$.
\end{itemize}

Let $t\ge\rho(\xi_{\mathrm{min}})+\varepsilon_0 $ be given.

If $\rho$ has no critical points on $\{\rho=t\}$, (*) follows from Lemma \ref{7.8.16}.

It remains  the case when precisely one critical point of $\rho$, $\xi$, lies on $\{\rho=t\}$.

Since the critical points of $\rho$ are non-degenerate and $\rho$ is strictly subharmonic (which yields that $\rho$ has no local maxima), then  either $\xi$ is the point of a strong local minimum of $\rho$, or $\xi$ is a saddle point of $\rho$.

First assume that $\rho$ has a  local minimum at $\xi$.
Then  $\xi$ is an isolated point of $\{\rho\le t\}$. Therefore we can find an open neighborhood $U$ of $\xi$ and an open neighborhood $V$ of $\{\rho\le t\}\setminus\{\xi\}$ such that $U\cap V=\emptyset$. Choose $\varepsilon>0$ such that
\[
\{\rho\le t+\varepsilon\}\subseteq U\cup V,
\]and
$U\cap \{\rho\le t+\varepsilon\}$ is contained in at least one set of $\Cal U$. Since $\rho$ has no critical points on $V\cap \{\rho=t\}$, we may moreover assume that $\rho$ has no critical points on $V\cap \{t-\varepsilon\le \rho\le t+\varepsilon\}$.

Let $t-\varepsilon<t_1< t< t_2< t+\varepsilon$ be given. Then, by Lemma \ref{7.8.16},  $V\cap \{\rho\le t_2\}$ is a  bump extension of $ \{\rho\le t_1\}$ ($=V\cap \{\rho\le t_1\}$), which is $\Cal U$-fine  and does not meet $\{\rho\le t_1-1\}$.
 Moreover, then
the pair $\big(V\cap\{\rho\le t_2\},U\cap \{\rho\le t+t_2\}\big)$ is a bump in $X$ (condition (i) in Definition \ref{3.8.16} ia satisfied), which is
$\Cal U$-fine  (as $U$ is contained in at least one of the sets of $\Cal U$) and does not meet $V\supseteq\{\rho\le t_1-1\}$, and such that $\big(V\cap\{\rho\le t_2\}\big)\cup\big(U\cap \{\rho\le t+t_2\}\big)=\{\rho\le t_2\}$. Together this completes the proof of  (*) if $\rho$ has a local minimum at $\xi$.

Now let $\xi$ be a saddle point of $\rho$. Since $\xi$ is the only critical point of $\rho$ on $\{\rho=t\}$ and the set of all critical points of $\rho$ is discrete and closed in $X$ (as all critical points of $\rho$ are non-degenerate), then we can find $\varepsilon>0$ such that $\xi$ is the only critical point of $\rho$ also in $\{t-\varepsilon\le \rho\le t+\varepsilon\}$.
Since the critical points of $\rho$ are non-degenerate,  by a lemma of Morse (see, e.g., \cite[Lemma 2.2]{M1}), we can find a neighborhood $W$ of $\xi$ and a $\Cal C^\infty$ diffeomorphism $z$ from $W$ onto a neighborhood of the origin in $\C$  such that $z(\xi)=0$ and, with $x:=\rea z$ and $y=\im z$,
\begin{equation}\label{13.8.16'}\rho=t+x^2-y^2\quad\text{on}\quad W.
\end{equation}Moreover, we may choose $W$ so small that
\begin{equation}\label{13.8.16''}
W\cap\{\rho\le t-1\}=\emptyset\quad\text{and } W\text{ is contained in at least one set of }\Cal U.
\end{equation}
Let $t-\varepsilon<t_1< t< t_2<t+\varepsilon$ be given. Choose $R>r>0$ such that $\{\vert z\vert\le R\}$ is compact and
\begin{equation}\label{12.8.16}\{\vert z\vert\le r\}\subseteq \{t_1<\rho<t_2\}.
\end{equation}
Further we choose a $\Cal C^\infty$ function $\chi:X\to [0,1]$ with $\chi=1$ on $\{\vert z\vert\le r/4\}$ and $\chi=0$ on $X\setminus\{\vert z\vert<r/2\}$. Since $\rho=t+x^2-y^2$ on $\{\vert z\vert\le R\}$, then we can find  $\delta>0$ so small that the functions
 \[
 \rho^{}_+:=\rho+\delta\chi\quad\text{and}\quad\rho^{}_-:=\rho-\delta\chi
 \]have the same critical points as $\rho$.

Since $\rho_+^{}=\rho^{}_-=\rho$ outside $\{\vert z\vert\le r\}$, $\rho_+^{}\ge \rho$ and $\rho_-^{}\le \rho$ on $\{\vert z\vert\le r\}$, and by \eqref{12.8.16}, we have
\begin{align}\label{12.8.16'}
&\{\rho^{}_+\le \tau\}=\{\rho\le \tau\}\quad\text{if}\quad \tau\le t_1,\\
\label{13.8.16}&\{\rho^{}_-\le \tau\}=\{\rho\le \tau\}\quad\text{if}\quad \tau\ge t_2.
\end{align}Since $\rho_+^{}(\xi)=\rho(\xi)+\delta=t+\delta>t $ and $\rho_-^{}(\xi)<t$, we have $\xi\not\in \{t_1\le \rho_+^{}\le t\}$
and $\xi\not\in \{t\le\rho_-^{}\le t_2\}$. Since $\xi$ is the only critical point of $\rho$ in $\{t-\varepsilon\le \rho\le t+\varepsilon\}$ and $\rho_\pm^{}$ have the same critical points as $\rho$, it follows that $\rho_+^{}$ has no critical points in $\{t_1\le \rho_+^{}\le t\}$, and $\rho_-^{}$ has no critical points in $\{t\le\rho_-^{}\le t_2\}$. By Lemma \ref{7.8.16} and \eqref{12.8.16'}, this shows that $\{\rho_+^{}\le t\}$ is a  bump extension of $\{\rho\le t_1\}$ ($=\{\rho_+^{}\le t_1\}$), which is $\Cal U$-fine and does not meet $\{\rho\le t_1-1\}$ ($=\{\rho_+^{}\le t_1-1\}$), and $\{\rho\le t_2\}$ ($=\{\rho_-^{}\le t_2\}$) is a  bump extension of $\{\rho_-^{}\le t\}$, which is $\Cal U$-fine and does not meet $\{\rho_-^{}\le t-1\}\supseteq \{\rho_-^{}\le t_1-1\}\supseteq \{\rho\le t_1-1\}$, where the last ``$\supseteq$'' holds, because $\rho_-^{}\le \rho$.

Therefore, it is now sufficient to find a bump in $X$, $(E,F)$, with
\begin{itemize}
\item[(I)] $E=\{\rho_+^{}\le t\}$,
\item[(II)] $E\cup F=\{\rho_-^{}\le t\}$,
\end{itemize} which is $\Cal U$-fine and does not meet $\{\rho\le t_1-1\}$.
For that we define $E$ by (I) and set
$F=\{\vert z\vert\le R\}\cap \{\rho^{}_-\le t\}$. Since $\rho_+^{}=\rho_-^{}$ outside $\{\vert z\vert\le R\}$, then also (II) holds.

Further, since $\rho_-^{}\le \rho^{}_+$ on $X$ and $\rho^{}_-=\rho^{}_+$ on $\{r\le\vert z\vert\le R\}$, we have \eqref{9.8.16n}, \eqref{9.8.16n'} and the ``$=$'' in \eqref{9.8.16n''}. Moreover, from \eqref{13.8.16'} we see
 \[
 E\cap\{r\le \vert z\vert \le R\}=\{\vert x\vert\le \vert y\vert\}\cap \{r\le \vert z\vert \le R\}\not=\{r\le \vert z\vert \le R\},
 \]which is the ``$\not=$'' in \eqref{9.8.16n''}. So, $(E,F)$ is a bump in $X$. From \eqref{13.8.16''} it follows that $(E,F)$ is $\Cal U$-fine and does not meet $\{\rho\le t_1-1\}$.
\end{proof}

\section{A topological vanishing theorem on one-dimensional Stein spaces}

\begin{defn}\label{20.7.16} Let $X$ be a complex space, $Z$ a discrete and closed subset of $X$, and $A:X\to \mathrm{Mat}(n\times n,\C)$ holomorphic.
Then we denote by $\Cal C^{Z,\,\mathrm{GCom\,}A}$ the following subsheaf of  $\Cal C^{\mathrm{GCom\,}A}$: if
 $U\subseteq X$ is  open and non-empty, then
$\Cal C^{Z,\,\mathrm{GCom\,}A}(U)$ is the group of all $T\in \Cal C^{\mathrm{GCom\,}A}(U)$ with $T=I$ in a neighborhood of  $Z\cap U$.
\end{defn}
\begin{lem}\label{23.5.16'} Let $X$ be a Riemann surface,  $Z$  a discrete and closed subset of $X$,  and $(E,F)$ a bump in $X$ with $E\cap F\not=\emptyset$ (Def. \ref{3.8.16}). Let $A:X\to \mathrm{Mat}(n\times n,\C)$ be  holomorphic, $U$ a neighborhood of $E\cap F$ and $f\in \Cal C^{Z,\,\mathrm{GCom\,}A}(U)$. Then there exist neighborhoods $U_E$ and $U_F$ of $E$ and $F$, respectively, and maps $f_E\in\Cal C^{Z,\,\mathrm{GCom\,}A}(U_E)$ and $f_F\in\Cal C^{Z,\,\mathrm{GCom\,}A}(U_F)$ such that $ U_E\cap U_F\subseteq U$
\begin{align}&\label{9.8.16} f=f_E^{}f_F^{-1}\quad\text{on}\quad U_E\cap U_F,\quad\text{and}\\
&\label{9.8.16'} f^{}_E=I\quad\text{on}\quad U^{}_E\setminus U.
\end{align}
\end{lem}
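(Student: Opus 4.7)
The plan is to construct $f_E$ as a continuous ``taper'' of $f$ to the identity---equal to $f$ near $E\cap F$ and to $I$ outside a compact subset of $U$, extended by $I$ to a neighborhood $U_E$ of $E$---and to take $f_F\equiv I$ on a neighborhood $U_F$ of $F$ chosen so that $U_E\cap U_F$ lies inside the region where $f_E=f$. The relation $f=f_Ef_F^{-1}$ then holds tautologically on the overlap.

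The geometric setup uses the bump chart. Pick nested open neighborhoods $E\cap F\subseteq V_1\Subset V_2\Subset U$ with $V_1$ containing a tubular neighborhood of $F$ within $V_2$ (concretely, $V_1=V_2\cap\{\mathrm{dist}(\cdot,F)<\epsilon\}$ for small $\epsilon>0$), so that $V_2\setminus V_1$ is bounded away from $F$. Choose a $\Cal C^\infty$ cutoff $\chi:X\to[0,1]$ with $\chi\equiv 1$ on a slight shrinking $V_1^{\circ}$ of $V_1$, $\operatorname{supp}\chi\Subset V_1$, and $\chi\equiv 0$ in a neighborhood of $Z$ (possible since $Z$ is discrete and closed). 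Take $U_E=V_2\cup W$ with $W$ a thin open neighborhood of $E\setminus V_2$ disjoint from $V_1$ and from a thin neighborhood of $F$, and take $U_F$ to be that thin neighborhood of $F$; one checks from the distance estimates that $U_E\cap U_F\subseteq V_1$.

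The substantive content is the construction of $f_E$ on $V_2$. By Proposition \ref{11.6.16} the non-Jordan-stable points of $A$ form a discrete closed set, and in the intended application of this lemma they are placed into $Z$, so we may arrange $V_2$ to consist only of Jordan-stable points and to be contractible. Theorem \ref{23.7.16} then furnishes a continuous $T:V_2\to\mathrm{GL}(n,\C)$ conjugating $\mathrm{Com\,}A$ on $V_2$ to the constant algebra $\mathrm{Com\,}A(\xi_0)$ for some $\xi_0\in V_1$. Setting $\tilde f:=T^{-1}fT:V_2\to G$ with $G:=\mathrm{GCom\,}A(\xi_0)$ a connected complex Lie group (Lemma \ref{19.1.16''}), the contractibility of $V_2$ and path-connectedness of $G$ yield a continuous null-homotopy $H:[0,1]\times V_2\to G$ with $H(0,\cdot)\equiv I$ and $H(1,\cdot)=\tilde f$, obtained by concatenating a contraction of $V_2$ onto $\xi_0$ with a fixed path in $G$ from $I$ to $\tilde f(\xi_0)$. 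Define $f_E:=TH(\chi,\cdot)T^{-1}$ on $V_2$ and extend by $I$ off $V_2$; since $\chi\equiv 0$ in a neighborhood of $\partial V_2$, the extension is continuous, and by construction $f_E=f$ on $V_1^\circ$, $f_E=I$ on $U_E\setminus U$, and $f_E\equiv I$ near $Z$. Setting $f_F\equiv I$ on $U_F$ closes the construction: $f_E f_F^{-1}=f_E=f$ on $U_E\cap U_F\subseteq V_1^\circ$, and $f_F\in\Cal C^{Z,\mathrm{GCom\,}A}(U_F)$ trivially.

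The main obstacle is the fibrewise non-abelian interpolation: naive combinations like $\chi f+(1-\chi)I$ do not lie in $\mathrm{GCom\,}A$. The resolution is the passage through the constant group $G$ via Theorem \ref{23.7.16}, reducing the interpolation to a standard null-homotopy of maps from a contractible space into a connected Lie group. The subsidiary obstacle of non-Jordan-stable points lying in $V_2$ is handled by their discreteness (Proposition \ref{11.6.16}) together with the freedom to include them in $Z$ in the intended application, while the geometric subtlety of arranging $U_E\cap U_F\subseteq V_1$ uses that $\mathrm{dist}(V_2\setminus V_1,F)>0$ by the choice of $V_1$ as a tubular neighborhood of $F$ within $V_2$.
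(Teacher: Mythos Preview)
Your approach has a genuine gap at the heart of the construction. You require $V_2$ (and hence $V_1$) to contain $E\cap F$, and you then want to apply Theorem~\ref{23.7.16} on $V_2$ to obtain the trivialization $T$. But Theorem~\ref{23.7.16} requires \emph{every} point of $V_2$ to be Jordan stable, and $E\cap F$ may very well contain non-Jordan-stable points of $A$. Your escape clause---that ``in the intended application'' the non-Jordan-stable set $Y$ is contained in $Z$---does not help: even if $Y\subseteq Z$, those points still lie in $V_2\supseteq E\cap F$, and Theorem~\ref{23.7.16} still does not apply there. Puncturing $V_2$ at $Y$ destroys contractibility. A similar objection applies to the contractibility hypothesis itself: $E\cap F$ is an arbitrary compact subset of the disk $\{|z|\le R\}$ and $U$ an arbitrary neighborhood of it, so there is no reason a contractible $V_2$ with $E\cap F\subseteq V_2\Subset U$ should exist. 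Finally, your cutoff $\chi$ is asked to be $\equiv 1$ on $V_1^\circ$ and $\equiv 0$ near $Z$; if $Z$ meets $E\cap F$ (which is allowed), these two requirements collide unless you argue more carefully.

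The paper's proof avoids all three difficulties by \emph{not} interpolating over a neighborhood of $E\cap F$. Instead it uses the annular coordinate $z$ from the bump definition: since $Z\cup Y$ is discrete, one can choose a thin annulus $\{s_1<|z|<s_2\}\subseteq\{r\le|z|\le R\}$ missing $Z\cup Y$ entirely, and inside it a separating circle $\{|z|=s_0\}$. The set $\Gamma:=E\cap\{|z|=s_0\}=F\cap\{|z|=s_0\}$ is a \emph{proper} closed subset of that circle (by the ``$\neq$'' in the bump axioms), hence coverable by finitely many disjoint contractible open sets $V_j$ lying in $U$ and in the clean annulus. On $V=\bigcup V_j$ Corollary~\ref{9.8.16+} gives connectedness of $\Cal C^{\mathrm{GCom}\,A}(V)$, and a path from $f|_V$ to $I$ is subdivided into small steps $g_j$ so that the taper $\widetilde f=\prod_j(I+\chi g_j)$ stays invertible. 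Because the interpolation region now misses $E\cap F$, one cannot take $f_F\equiv I$; instead $f_E$ is defined as $f$ on $\{|z|<s_0\}$ and $\widetilde f$ on $\{|z|\ge s_0\}$, while $f_F$ is $I$ on $\{|z|<s_0\}$ and $f^{-1}\widetilde f$ on $\{|z|\ge s_0\}$, so that $f_Ef_F^{-1}=f$ on the overlap regardless of where $\widetilde f$ agrees with $f$. The essential geometric idea you are missing is that the bump structure lets you push the interpolation onto a one-dimensional separating arc that can be chosen to avoid the bad set $Z\cup Y$.
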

\begin{proof} Let $Y$ be the set of all points in $X$ which are not Jordan stable for $A$ (Def. \ref{8.6.16n'}). Since $Y$ is discrete and closed in $X$ (Proposition \ref{11.6.16}), then also $Z\cup Y$ is discrete and closed in $X$.

 By definition of a bump,  we have $R>r>0$, an open $\Omega\subseteq X$  and a $\Cal C^\infty$ diffeomorphism $z$ from $\Omega$ onto an open  neighborhood of $\Delta(R)$ with \eqref{9.8.16n} - \eqref{9.8.16n''}.
 Since $Z\cup Y$ is discrete and closed in $X$, we can  find $r\le s_1<s_2\le R$ such that
\begin{equation}\label{9.8.16*}(Z\cup Y)\cap \{s_1< \vert z\vert< s_2\}=\emptyset.
\end{equation}
Choose $s_1<s_0<s_2$ and let
\begin{equation}\label{10.8.16}\Gamma:=E\cap\{\vert z\vert=s_0\}=F\cap\{\vert z\vert=s_0\},
\end{equation}where the second ``$=$'' holds by the first ``$=$'' in \eqref{9.8.16n''}. Since $\Gamma$ is a closed subset of the circle $\{\vert z\vert=s_0\}$ (by definition) which is not equal to $\{\vert z\vert=s_0\}$ (by the second part of \eqref{9.8.16n''}), and since, by \eqref{10.8.16}, $\Gamma\subseteq E\cap F\subseteq U$, we can find finitely many open subsets $V''_j$, $V'_j$ and $V_j$, $1\le j\le m$, of $X$ such that
\begin{itemize}
\item[(a)] $\Gamma\subseteq V''_1\cup \ldots\cup V_m''$,
\item[(b)] $\overline {V''_j}\subseteq V'_j$ and $\overline {V_j'}\subseteq V_j$  for $1\le j\le m$,
\item[(c)] $V_j\subseteq \{s_1<\vert z\vert<s_2\}$ for $1\le j\le m$,
\item[(d)] $V_j\subseteq U$ for $1\le j\le m$,
\item[(e)] $V_1,\ldots, V_m$ are pairwise disjoint sets, each of which is contractible.
\end{itemize} Set $V''=V''_1\cup\ldots\cup V''_m$, $V'=V'_1\cup\ldots\cup V'_m$ and $V=V_1\cup\ldots\cup V_m$. Then, by (c) and \eqref{9.8.16*}, $V\cap Y=\emptyset$, which  implies, by (e) and Corollary \ref{9.8.16+}, that the group $\Cal C^{\mathrm{GCom\,}A}(V)$, endowed with the topology of uniform convergence on the compact subsets of $V$, is  connected. As $V\subseteq U$ (by (d)), it follows that there is a continuous map $\theta:[0,1]\to \Cal C^{\mathrm{GCom\,}A}(V)$ with
$\theta(0)=f\vert_V$ and $\theta(1)=I$ on $V$. Take $0=\tau_1<\tau_2<\ldots<\tau_m=1$ such that, with $g_j(\zeta):=\theta(\tau_j)(\zeta)\big(\theta(\tau_{j+1})(\zeta)\big)^{-1}-I$,
\[
\Vert g_j(\zeta)\Vert<1\quad\text{for all}\quad \zeta\in \overline{V'}\quad\text{and}\quad 1\le j\le m-1.
\] Choose a $\Cal C^\infty$ function $\chi:X\to [0,1]$ such that $\chi=1$ on $V''_j$ and $\chi=0$ outside $V'$, and define
\[
\widetilde f(\zeta)=\begin{cases}\big(I+\chi(\zeta)g_1(\zeta)\big)\cdot\ldots\cdot\big(I+\chi(\zeta)g_m(\zeta)\big)\quad&\text{if}\quad \zeta\in V,\\
I&\text{if}\quad \zeta\in X\setminus V.
\end{cases}
\]
Then  $\widetilde f=f$ on $V''$, $\widetilde f=I$ outside $V'$ and $\widetilde f\in \Cal C^{\mathrm{GCom\,}A}(X)$. Since, by (b), (c) and \eqref{9.8.16*}, $X\setminus \overline{V'}$ is a neighborhood of $Z$, it follows that even $\widetilde f\in \Cal C^{\mathrm{GCom\,}A, Z}(X)$.

Since, by (a), $V''$ is a neighborhood of $\Gamma$ and by \eqref{10.8.16}, we can find $s_1<t_1<s_0<t_2<s_2$ and  neighborhoods $U_E$ and $U_F$ of $E$ and $F$, respectively, such that
\begin{equation*}
(U_E\cup U_F)\cap \{t_1\le \vert z\vert\le t_2\}\subseteq V'',
\end{equation*}and hence
\begin{equation}\label{14.8.16} \widetilde f=f\quad\text{on}\quad (U_E\cup U_F)\cap \{t_1\le \vert z\vert\le t_2\}.
\end{equation}
By \eqref{9.8.16n} and  the ``$=$'' in \eqref{9.8.16n''}, we have
\[
F\setminus \{\vert z\vert<r\}=F\cap \{r\le \vert z\vert \le R\}=E\cap F\cap \{r\le \vert z\vert \le R\}\subseteq E\cap R\subseteq U.
\]Therefore, shrinking $U_F$, we can achieve that $U_F\setminus\{\vert z\vert<r\}\subseteq U$ and, hence,
\begin{equation}\label{15.8.16'}
U_F\setminus\{\vert z\vert<s_0\}\subseteq U.
\end{equation}By \eqref{9.8.16n'}, $E\cap\{\vert z\vert\le R\}\subseteq E\cap F$. Therefore, by  shrinking $U_E$, we can achieve that
$U_E\cap \{\vert z\vert\le R\}\subseteq U$ and, hence,
\begin{equation}\label{14.8.16'''}
U_E\cap \{\vert z\vert<s_0\}\subseteq U.
\end{equation}From \eqref{9.8.16n'} it follows that $E\cap \{vert z\vert\le R\}\subseteq E\cap F\subseteq U$. So, further shrinking $U_E$, we can achieve htat $U_E\cap \{vert z\vert R\}\subseteq U$ and, hence,
\begin{equation}\label{15.8.16--}
U_E\setminus U\subseteq U_E\setminus \{\vert z\vert\le R\}.
\end{equation}
By \eqref{14.8.16'''} and \eqref{15.8.16'}, the following definitions are correct
\[
f_E:=\begin{cases}  f\quad&\text{on}\quad U_E\cap \{\vert z\vert< s_0\},\\\widetilde f&\text{on}\quad U_E\setminus\{\vert z\vert< s_0\},\end{cases}\quad\text{and}\quad f_F:=\begin{cases}I\quad&\text{on}\quad U_F\cap \{\vert z\vert< s_0\},\\f^{-1}\widetilde f&\text{on}\quad U_F\setminus\{\vert z\vert< s_0\}.\end{cases}
\]By \eqref{14.8.16}, $f_E$ and $f_F$ are continuous on $U_E$ and $U_F$, respectively. Since  $\widetilde f\in \Cal C^{\mathrm{GCom\,}A, Z}(X)$ and $f\in \Cal C^{\mathrm{GCom\,}A, Z}(U)$, it follows that
$f_E\in \Cal C^{Z,\,\mathrm{GCom\,}A}(U_E)$ and $f_F\in \Cal C^{Z,\,\mathrm{GCom\,}A}(U_F)$. \eqref{9.8.16} is clear from the definition.
Moreover, by definition,
\begin{equation*}
f_E=\widetilde f\quad\text{on}\quad U_E\setminus \{\vert z\vert\le R\}.
\end{equation*}Since $\widetilde f=I$ outside $V'$ and $V'\subseteq \{\vert z\vert\le R\}$ (by (b) and (c)), this implies
\begin{equation*}
f_E=I\quad\text{on}\quad U_E\setminus \{\vert z\vert\le R\}.
\end{equation*} Together with \eqref{15.8.16--} this proves
\eqref{9.8.16'}.
\end{proof}

\begin{lem}\label{4.5.16}
 Let $X$ be a non-compact connected Riemann surface, $Z$ a discrete and closed subset of $X$, and $A:X\to \mathrm{Mat}(n\times n,\C)$ a holomorphic map. Then  $H^1\big(X,\Cal C^{Z,\,\mathrm{GCom\,}A}\big)=0$.
\end{lem}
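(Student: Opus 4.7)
The proof will proceed by a bump exhaustion argument of Cartan--Grauert type, combining Theorem \ref{5.8.16} (which produces an appropriate exhaustion of $X$ by bumps) with Lemma \ref{23.5.16'} (which solves the splitting problem over a single bump). Given any $\mathcal{C}^{Z,\,\mathrm{GCom\,}A}$-cocycle $f$ on some open covering $\mathcal{U}$ of $X$, the goal is to produce a $0$-cochain $\{g_i\}$ with $f_{ij}=g_i g_j^{-1}$ on overlaps.

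First I would apply Theorem \ref{5.8.16} to the covering $\mathcal{U}$ to obtain compact sets $B_0,B_1,\ldots$ with $X=\bigcup_\mu B_\mu$, each $B_\mu$ contained in some $U_{i(\mu)}\in\mathcal{U}$, and with $(K_{\mu-1},B_\mu)$ a bump for every $\mu\ge1$, where $K_\mu:=B_0\cup\cdots\cup B_\mu$. The base case $\mu=0$ is easy: because $B_0\subseteq U_{i(0)}$, one can produce a trivialization of $f$ on a small open neighborhood $\Omega_0$ of $B_0$ simply by setting $g_i^{(0)}:=f_{i,i(0)}$ on $U_i\cap\Omega_0$ (shrinking $\Omega_0$ to avoid $Z$-locus issues, and using that $f_{i(0),i(0)}=I$).

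The inductive step is where Lemma \ref{23.5.16'} enters. Suppose $g_i^{(\mu-1)}\in\mathcal{C}^{Z,\,\mathrm{GCom\,}A}(U_i\cap\Omega_{\mu-1})$ trivialize $f$ on a neighborhood $\Omega_{\mu-1}$ of $K_{\mu-1}$, and let $\tilde g_i$ be an analogous local trivialization on a neighborhood of $B_\mu$ built from $f_{i,i(\mu)}$. If $K_{\mu-1}\cap B_\mu=\emptyset$ (case (i) of Definition \ref{3.8.16}), disjoint neighborhoods allow one to glue $g^{(\mu-1)}$ and $\tilde g$ directly. Otherwise, the cocycle relations force the product $d:=(\tilde g_i)^{-1}g_i^{(\mu-1)}$ to be independent of $i$, hence to define a single section in $\mathcal{C}^{Z,\,\mathrm{GCom\,}A}$ on a neighborhood $U$ of $K_{\mu-1}\cap B_\mu$. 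Applying Lemma \ref{23.5.16'} to this $d$ with $E=K_{\mu-1}$ and $F=B_\mu$ yields a splitting $d=d_E d_F^{-1}$; then modifying via $g_i^{(\mu)}:=g_i^{(\mu-1)}d_F$ on the $K_{\mu-1}$-side and $g_i^{(\mu)}:=\tilde g_i d_E$ on the $B_\mu$-side produces matching trivializations on a neighborhood $\Omega_\mu$ of $K_\mu$.

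The main obstacle is ensuring convergence of the sequence $g_i^{(\mu)}$ as $\mu\to\infty$, and this is exactly where the second conclusion \eqref{9.8.16'} of Lemma \ref{23.5.16'}, namely $d_E=I$ on $U_E\setminus U$, becomes essential: it guarantees that at each inductive step the modification $d_F$ applied on the $K_{\mu-1}$-side is identity outside a small neighborhood of the thin overlap $K_{\mu-1}\cap B_\mu$. Combined with the local finiteness property (d) of Theorem \ref{5.8.16} (any compact set is met by only finitely many $B_\mu$), this yields that $g_i^{(\mu)}|_K$ stabilizes for any compact $K\subseteq U_i$ once $\mu$ is large enough, so the pointwise limit $g_i:=\lim_{\mu\to\infty}g_i^{(\mu)}$ exists, is continuous, lies in $\mathcal{C}^{Z,\,\mathrm{GCom\,}A}(U_i)$, and satisfies $f_{ij}=g_i g_j^{-1}$ globally. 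A secondary technical point is aligning the given covering $\mathcal{U}$ with the bump covering --- handled by refining and invoking Proposition \ref{17.12.15--} --- and verifying that the correction terms $d_E,d_F$ remain identity near $Z$, which is automatic since all ingredients lie in the subsheaf fixing $Z$.
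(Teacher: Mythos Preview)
Your approach is essentially the paper's own: exhaust $X$ by bumps via Theorem \ref{5.8.16}, extend a trivialization across each bump using the Cartan-type splitting of Lemma \ref{23.5.16'}, and use property \eqref{9.8.16'} together with local finiteness (d) to get stabilization of the sequence $g^{(\mu)}$. The paper organizes the bookkeeping slightly differently (working with germs on the compacts $B_\mu$ and an explicit index function $L(\mu)\to\infty$ tracking where stabilization has occurred), but the substance is the same.

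One genuine slip to fix: with your convention $d=\tilde g_i^{-1}g_i^{(\mu-1)}$ and the splitting $d=d_E d_F^{-1}$ from Lemma \ref{23.5.16'}, the factor $d_E$ lives on $U_E$ and $d_F$ on $U_F$, so the formulas $g_i^{(\mu)}=g_i^{(\mu-1)}d_F$ on the $K_{\mu-1}$-side and $g_i^{(\mu)}=\tilde g_i\,d_E$ on the $B_\mu$-side are not well-defined (each uses the factor from the wrong side). The cure is to reverse the order in the definition of $d$, setting $d:=(g_i^{(\mu-1)})^{-1}\tilde g_i$; then $d=d_E d_F^{-1}$ gives $g_i^{(\mu)}:=g_i^{(\mu-1)}d_E$ on $U_E$ and $g_i^{(\mu)}:=\tilde g_i\,d_F$ on $U_F$, which match on the overlap, and \eqref{9.8.16'} now says $d_E=I$ on $U_E\setminus U$, exactly the identity-outside-the-overlap property you need for convergence. (Equivalently, apply Lemma \ref{23.5.16'} to $d^{-1}$.) This is precisely how the paper sets things up: its transition germ $h=(g_\mu^{(\ell)})^{-1}g_{\mu,\ell+1}$ has the ``old'' trivialization on the left, and the $E$-side factor $h_{B_0\cup\ldots\cup B_\ell}$ is the one that equals $I$ away from the new bump.
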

\begin{proof} During this proof we use the abbreviation $\Cal F:=\Cal C^{Z,\,\mathrm{GCom\,}A}$, and, for each  compact set of $K\subseteq X$, we denote by $\Cal F(K)$ the group of germs of sections of $\Cal F$ in  neighborhoods of $K$.

Let an $\Cal F$-cocycle $f$ on $X$ be given.

Then, by Theorem \ref{5.8.16}, there exists a sequence $(B_\mu)_{\mu\in\N}$ of compact subsets of $X$ such that
\begin{itemize}
\item[(a)] each $B_\mu$ is contained in at least one set of the covering of $f$;
\item[(b)] for each $\mu\in \N$, $(B_0\cup\ldots\cup B_{\mu},B_{\mu+1})$ is a bump in $X$;
\item[(c)] $X=\bigcup_{\mu\in\N}B_\mu$.
\item[(d)] for each compact set $K\subseteq X$, there exists $N(\mu)\in\N$ such that $B_\mu\cap K=\emptyset$ if $\mu\ge N(\mu)$.
\end{itemize}
Since all $B_\mu$ are  compact, from (d)   we get $\mu_0\in\N^*$ and a sequence $\{L(\mu)\}_{\mu=\mu_0}^\infty $ of positive integers such that
\begin{align}&\label{15.8.16+}
\big(B_0\cup\ldots\cup B_{L(\mu)}\big)\cap B_\mu=\emptyset\quad\text{for}\quad \mu\ge \mu_0,\\
&\label{16.8.16'}
\lim_{\mu\to\infty} L(\mu)=\infty.
\end{align}
By (a), we can find a sequence $\{U_\mu\}_{\mu\in\N}$ of open subsets of $X$ such that, for each $\mu\in\N$,
$B_\mu\subseteq U_\mu$ and $U_\mu$ is contained in at least one of the sets of the covering of $f$. Then, by (c), $\Cal U:=\{U_\mu\}_{\mu\in\N}$ is an open covering of $X$ and we can find a $\Cal U$-cocycle $\widetilde f=\{\widetilde f_{\mu\nu}\}_{\mu,\nu\in\N}$ which is induced by $f$. By
Proposition \ref{17.12.15--}, it is sufficient to prove that $\widetilde f$ is $\Cal F$-trivial.

Let $g_{\mu\nu}$ the germ in $\Cal F(B_\mu\cap B_\nu)$ defined by $\widetilde f_{\mu\nu}$.

Then, again by Proposition \ref{17.12.15--}, it is sufficient to find a sequence $\{g_\mu\}_{\mu\in\N}$ of germs $g_\mu\in \Cal F(B_\mu)$ with $ g_{\mu\nu}=g_{\mu}^{}g^{-1}_\nu$ on $B_\mu\cap B_\nu$. This will be done if we have constructed  a sequence
of $k$-tuples $\{(g^{(k)}_0,\ldots,g^{(k)}_{k})\}_{\mu\in\N}$ of germs $g^{(k)}_\mu\in\Cal F(B_\mu)$ such that, for each $k\in \N$,
\begin{align}&\label{16.8.16}\big(g^{(k)}_{\mu}\big)^{-1}g_{\mu\nu}^{} g^{(k)}_\nu=I\quad\text{on}\quad B_\mu\cap B_\nu\quad\text{for all}\quad 0\le \mu,\nu\le k,\\
&\label{16.8.16''} \text{if }k\ge \mu_0,\text{ then }g^{(k)}_\mu=g^{(k-1)}_\mu\quad\text{for all}\quad 0\le \mu\le L(\mu),
\end{align}
because then  $g_\mu:=\lim_{k\to \infty }g^{(k)}_{\mu}$ are well-defined germs in $\Cal F(B_\mu)$ (by \eqref{16.8.16''} and \eqref{16.8.16'}), which have the required properties (by \eqref{16.8.16}).

To construct this sequence by induction, we set $g^{(0)}_0=I$ and assume that, for some $\ell\in\N$ and all $0\le k\le \ell$, we already have a  $k$-tuple $\{(g^{(k)}_0,\ldots,g^{(k)}_{k})\}_{\mu\in\N}$ of germs $g^{(k)}_\mu\in\Cal F(B_\mu)$, $0\le k\le \ell$, such that \eqref{16.8.16} and \eqref{16.8.16''} hold true for $0\le k\le \ell$.

Since $\widetilde f$ is a cocycle, we have $g^{}_{\mu\nu}g^{}_{\nu,\ell+1}=g^{}_{\mu,\ell+1}$ on $B_\mu\cap B_{\ell+1}$ for all $\mu,\nu$.  Since $g_{\mu\nu}^{}=g^{(k)}_{\mu}\big(g^{(k)}_\nu\big)^{-1}$ (as \eqref{16.8.16} holds for $k=\ell$), from this we obtain
\[
\big(g_\nu^{(\ell)}\big)^{-1}g_{\nu,\ell+1}^{}=\big(g_\mu^{(\ell)}\big)^{-1}g_{\mu,\ell+1}^{}\quad\text{on}\quad B_\mu\cap B_\nu\cap B_{\ell+1}\quad\text{for all}\quad 0\le \mu,\nu\le \ell.
\] Therefore, there is a well-defined germ $h\in\Cal F\big((B_0\cup\ldots\cup B_{\ell})\cap B_{\ell+1}\big)$ with
\begin{equation}\label{16.8.16-}
h=\big(g_\mu^{(\ell)}\big)^{-1}g_{\mu,\ell+1}^{}\quad\text{on}\quad B_\mu\cap B_{\ell+1}\quad\text{for}\quad 0\le \mu\le \ell.
\end{equation}
Since $(B_0\cup\ldots\cup B_\ell,B_{\ell+1}$ is a bump in $X$ with $\big(B_0\cup\ldots\cup B_{L(\ell+1)}\big)\cap B_{\ell+1}=\emptyset$, from Lemma \ref{23.5.16'} we get germs $h_{B_0\cup\ldots\cup B_\ell}^{}\in \Cal F\big(B_0\cup\ldots\cup B_\ell\big)$ and $h_{B_{\ell+1}}^{}\in \Cal F\big(B_{\ell+1}\big)$ such that
\begin{align}&\label{16.8.16++}
h^{-1}_{B_0\cup\ldots\cup B_\ell} h h^{}_{B_{\ell+1}}=I\quad\text{on}\quad B_0\cup\ldots\cup B_{L(\ell+1)},\\
&\label{16.8.16+++}\text{and, if }\ell+1\ge \mu_0, \text{ then }h_{B_0\cup\ldots\cup B_\ell}^{}=I\text{ on } B_0\cup\ldots\cup B_{L(\ell+1)}.
\end{align}
Define
\[
g^{(\ell+1)}_\mu=\begin{cases} g^{(\ell)}_\mu h^{}_{B_0\cup\ldots\cup B_\ell}\quad&\text{on}\quad B_\mu\quad\text{if}\quad 0\le \mu\le \ell,\\
h_{B_{\ell+1}}^{}&\text{on}\quad B_{\ell+1}\quad\text{if}\quad \mu=\ell+1.\end{cases}
\]Since  \eqref{16.8.16} holds for $k=\ell$, then
\begin{equation*}
\big(g_\mu^{(\ell+1)}\big)^{-1} g_{\mu\nu}^{} g_\nu^{(\ell+1)}=I\quad\text{on}\quad B_\mu\cap B_\nu\quad\text{if}\quad 0\le \mu,\nu\le\ell,
\end{equation*}and from \eqref{16.8.16-} and \eqref{16.8.16++} we get
\begin{equation*}
\big(g_\mu^{(\ell+1)}\big)^{-1} g_{\mu,\ell+1}^{} g_{\ell+1}^{(\ell+1)}=I\quad\text{on}\quad B_\mu\cap B_{\ell+1}\quad\text{for}\quad 0\le \mu\le \ell+1,
\end{equation*}
i.e., we have \eqref{16.8.16} for $k=\ell+1$. \eqref{16.8.16+++} yields \eqref{16.8.16''}  for $k=\ell+1$.
 \end{proof}

\begin{thm}\label{17.8.16--}
 Let $X$ be a one-dimensional Stein space, $Z$ a discrete and closed subset of $X$, and $A:X\to \mathrm{Mat}(n\times n,\C)$ holomorphic. Then  \[H^1\big(X,\Cal C^{\mathrm{Z,\,GCom\,}A}\big)=0.\]
\end{thm}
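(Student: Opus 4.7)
The plan is to reduce Theorem \ref{17.8.16--} to its Riemann surface analogue (Lemma \ref{4.5.16}) by passing to the normalization $\pi:\widetilde X\to X$. Since $X$ is a one-dimensional reduced Stein space, $\widetilde X$ is smooth one-dimensional Stein, i.e.\ a disjoint union of non-compact connected Stein Riemann surfaces, so Lemma \ref{4.5.16} applies componentwise. Let $S\subseteq X$ denote the (discrete, closed) singular locus, and let $N_A\subseteq X$ denote the non-Jordan-stable locus of $A$, which is again discrete and closed by Proposition \ref{11.6.16} and its footnoted generalization to complex spaces. Put $\widetilde Z=\pi^{-1}(Z)$ and $\widetilde S=\pi^{-1}(S)$; both are discrete and closed in $\widetilde X$.

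Given a $\Cal C^{Z,\,\mathrm{GCom\,}A}$-cocycle $f=\{f_{ij}\}$ with covering $\Cal U$, I would first refine $\Cal U$ so that each point of $S\cup N_A$ lies in exactly one chart, writing $U_{i(s)}$ for the unique chart containing $s\in S$; by Proposition \ref{17.12.15--} this does not alter the $\Cal C^{Z,\,\mathrm{GCom\,}A}$-equivalence class. The crucial step is then to produce an equivalent cocycle $f'=\{f'_{ij}\}$ which is identically $I$ in a neighborhood of each point of $S$, i.e.\ a $\Cal C^{Z\cup S,\,\mathrm{GCom\,}A}$-cocycle. For each $s\in S\setminus Z$, choose a pairwise-disjoint family of neighborhoods $V_s\subseteq U_{i(s)}$ with $V_s\cap(Z\cup S\cup N_A)\subseteq\{s\}$. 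The cocycle identity yields the local trivialization $\eta_i:=f_{i,i(s)}\in\Cal C^{\mathrm{GCom\,}A}(V_s\cap U_i)$, satisfying $\eta_i\eta_j^{-1}=f_{ij}$ on $V_s\cap U_i\cap U_j$ and $\eta_{i(s)}\equiv I$. For $i\neq i(s)$ we have $s\notin U_i$ and every point of $V_s\cap U_i$ is Jordan-stable for $A$; so Lemma \ref{26.1.16''} straightens $\mathrm{Com\,}A$ locally via holomorphic conjugation, turning $\mathrm{GCom\,}A$ into a bundle of a fixed complex Lie group on $V_s\cap U_i$. Using the explicit continuous path from an arbitrary element of this Lie group to the identity constructed in the proof of Lemma \ref{19.1.16''}, composed with a radial bump function on a shrunken $V_s'\subset V_s$, I would extend $\eta_i$ to $\hat\eta_i\in\Cal C^{Z,\,\mathrm{GCom\,}A}(U_i)$ equal to $\eta_i$ on $V_s'\cap U_i$ and equal to $I$ outside $V_s$; setting $\hat\eta_{i(s)}\equiv I$ completes the family. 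Running all $s\in S\setminus Z$ in parallel (possible by disjointness of the $V_s$), the cocycle $f'_{ij}:=\hat\eta_i^{-1}f_{ij}\hat\eta_j$ has the desired property.

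The pullback $\widetilde{f'}_{ij}:=f'_{ij}\circ\pi$ is a $\Cal C^{\widetilde Z\cup\widetilde S,\,\mathrm{GCom}(A\circ\pi)}$-cocycle on $\widetilde X$; Lemma \ref{4.5.16} applied componentwise with the discrete closed set $\widetilde Z\cup\widetilde S$ yields a trivialization $\tilde h_i\in\Cal C^{\widetilde Z\cup\widetilde S,\,\mathrm{GCom}(A\circ\pi)}(\pi^{-1}(U_i))$; in particular $\tilde h_i\equiv I$ near $\widetilde S$. Since $\pi$ restricts to a biholomorphism from $\widetilde X\setminus\widetilde S$ onto $X\setminus S$, the formula $h_i:=\tilde h_i\circ\pi^{-1}$ on $U_i\setminus S$ extends continuously by $I$ across $U_i\cap S$, producing $h_i\in\Cal C^{Z,\,\mathrm{GCom\,}A}(U_i)$ with $h_ih_j^{-1}=f'_{ij}$; then $H_i:=\hat\eta_ih_i$ trivializes the original $f$. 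The main obstacle is the extension $\eta_i\mapsto\hat\eta_i$, since the target $\mathrm{GCom\,}A$ varies with the base point and ordinary Tietze extension does not directly apply; the strategy overcomes this by combining Lemma \ref{26.1.16''} (local straightening at Jordan-stable points) with the continuous-path construction from the proof of Lemma \ref{19.1.16''} (to interpolate to the identity inside each fiber), after refining $\Cal U$ so that the interpolation regions are disjoint from $N_A$.
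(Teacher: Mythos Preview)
Your overall architecture---refine the covering, pass to the normalization, apply Lemma \ref{4.5.16}, push the trivialization back down---is exactly the paper's strategy. But the step you call ``crucial'' is in fact unnecessary, and the paper bypasses it with a one-line observation.

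Once you have refined $\Cal U$ so that each point of $S$ (indeed of $S\cup Z$) lies in exactly one chart, the overlaps $U_i\cap U_j$ for $i\neq j$ contain no points of $S\cup Z$. The sheaf condition defining $\Cal C^{S\cup Z,\,\mathrm{GCom\,}A}(U_i\cap U_j)$ is ``equal to $I$ in a neighborhood of $(S\cup Z)\cap(U_i\cap U_j)$''; since that intersection is empty, the condition is vacuous, and the \emph{original} cocycle $f$ is already a $\Cal C^{S\cup Z,\,\mathrm{GCom\,}A}$-cocycle. No modification $f\mapsto f'$ is needed; the paper pulls $f$ back directly, trivializes on $\widetilde X$ by Lemma \ref{4.5.16} with the discrete set $\pi^{-1}(S\cup Z)$, and pushes down exactly as in your last paragraph. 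In particular there is no need to bring in the non-Jordan-stable locus $N_A$, Lemma \ref{26.1.16''}, or the path construction of Lemma \ref{19.1.16''}.

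Your extension $\eta_i\mapsto\hat\eta_i$ is plausible in outline but not fully rigorous as written: the path in Lemma \ref{19.1.16''} involves a choice of $\lambda$ avoiding eigenvalues and is not presented as depending continuously on the element, so turning it into a continuous section over $V_s\cap U_i$ requires more than composition with a bump function---you would really need something like Corollary \ref{9.8.16+} on contractible pieces of $V_s\cap U_i$. This can likely be repaired, but the point is moot once you notice that the refinement alone already puts $f$ in the smaller sheaf.
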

\begin{proof} Let $f\in Z^1\big(\Cal U,\Cal \Cal C^{Z,\,\mathrm{GCom\,}A}\big)$ be given and let $\Cal U=\{U_i\}_{i\in I}$ be the covering of $f$.  Let $S$ be the set of non-smooth points of $X$.
Since $S$ is discrete and closed in $X$, also $S\cup Z$ is discrete and closed in $X$. Therefore, by Proposition \ref{17.12.15--}, passing to a refinement of $\Cal U$, we may assume that $(S\cup Z)\cap U_i\cap U_j=\emptyset$ for all $i,j\in I$ with $i\not=j$,
 which implies that $f\in Z^1\big(\Cal U,\Cal \Cal C^{S\cup Z,\,\mathrm{GCom\,}A}\big)$.

Now let $\pi:\widetilde X\to X$ be the normalization of $X$, and let
\begin{itemize}
\item[-]$\widetilde A:=A\circ \pi$,
\item[-]$\widetilde U_i:=\pi^{-1}(U_i)$, $i\in I$, and $\widetilde{\Cal U}:=\{\widetilde U_i\}_{i\in I}$,
\item[-]$\widetilde f_{ij}:=f_{ij}\circ \pi$, $i,j\in I$, and $\widetilde f:=\{\widetilde f_{ij}\}_{i,j\in I}$.
\end{itemize}
Since, for each point $\zeta\in X$, $\pi^{-1}(\zeta)$ is finite and $S\cup Z$ is discrete and closed in $X$, then $\pi^{-1}(S\cup Z)$ is discrete and closed in $\widetilde X$, and $\widetilde f\in Z^1\big(\widetilde{\Cal U},\Cal C^{\pi^{-1}(S\cup Z),\,\mathrm{GCom\,}\widetilde A}\big)$. Since $X$ is one-dimensional, $\widetilde X$ is a  Riemann surface, each connected component of which is non-compact. Therefore, by Lemma \ref{4.5.16},
$\widetilde f$ is $\Cal C^{\pi^{-1}(S\cup Z),\,\mathrm{GCom\,}\widetilde A}$-trivial, i.e., there exist $\widetilde g_i\in \Cal C^{\pi^{-1}(S\cup Z),\,\mathrm{GCom\,}\widetilde A}(\widetilde U_i)$, $i\in I$, with
\begin{equation*}\widetilde f_{ij}^{}= \widetilde g_i^{}\widetilde g_j^{-1}\quad\text{on}\quad \widetilde U_i\cap \widetilde U_j,\quad i,j\in I.
\end{equation*} Since
 $\widetilde g_i=I$ in a neighborhood of $\widetilde U_i\cap\pi^{-1}(S\cup Z)$ and since $\pi$ is biholomorphic from $\widetilde X\setminus \pi^{-1}(S)$ onto $X\setminus S$, it follows that there are uniquely defined maps $g^{}_i\in\Cal C^{S\cup Z,\,\mathrm{GCom\,} A}$ such that
\begin{align*}&\widetilde g^{}_i= g^{}_i\circ\pi\quad\text{on}\quad \widetilde U_i\quad\text{for all}\quad i\in I, \quad\text{and}\\
&f_{ij}^{}= g_i^{} g_j^{-1}\quad\text{on}\quad U_i\cap  U_j,\quad i,j\in I.
\end{align*} As $\Cal C^{S\cup Z,\,\mathrm{GCom\,} A}$ is a subsheaf of $\Cal C^{Z,\,\mathrm{GCom\,} A}$, this completes the proof.
\end{proof}

\section{Proof of Theorem \ref{4.10.16} and (hence) of Theorem \ref{28.6.16*} {}}

 Let $X$ and $A$ be as in Theorem \ref{4.10.16},  and let an open covering   $\Cal U=\{U_i\}_{i\in I}$ of $X$ and a $\big(\Cal U,\widehat{\Cal C}^{\mathrm{GCom\,} A}\big)$-cocycle $h=\{h_{ij}\}_{i,j\in I}$ be given.

Let $S$ be the set of non-smooth points of $X$. Since $S$ is discrete and closed in $X$, passing to a refinement of $\Cal U$ (Proposition \ref{17.12.15--}), we may assume that
\begin{equation}\label{18.8.16n}
S\cap U_i^{}\cap U_j^{}=\emptyset\quad\text{for all}\quad i,j\in I\quad\text{with}\quad i\not=j.
\end{equation} Then $h$ can be interpreted as a $\Cal C^{S,\,{\mathrm{GCom\,}A}}$-cocycle (Def. \ref{20.7.16}). By Theorem \ref{17.8.16--} this cocycle is
$\Cal C^{S,\,\mathrm{GCom\,}A}$-trivial.

Now we observe that
$\Cal C^{S,\,\mathrm{GCom\,}A}$ is a subsheaf of $\widehat{\Cal C}^{\mathrm{GCom\,}A}$. Indeed, let an open $U\subseteq X$, $f\in\Cal C^{S,\,\mathrm{GCom\,} A}(U)$ and $\xi\in U$ be given. If $\xi\in S$, then \eqref{19.8.16} is trivial (as then $f\equiv I$ in a neighborhood of $\xi$), and if $\xi$ is a smooth point of $X$, then \eqref{19.8.16} follows from the Smith criterion (Theorem \ref{28.6.16**}, condition (ii)).

So, $h$ is $\widehat{\Cal C}^{\mathrm{GCom\,}A}$-trivial. This  completes the proof of Theorem \ref{4.10.16}, and, as
explained in Section \ref{5.10.16}, also the proof of Theorem \ref{28.6.16*}.

\section{Local counterexamples}\label{local examples}

Let $z$ and $w$ be the canonical complex coordinate functions on $\C^2$.

We begin with the following observation of O. Forster and K. J. Ramspott  \cite[page 159]{FR2}): If $\alpha$, $\beta$ and $\gamma$ are holomorphic functions in a neighborhood of the origin in $\C^2$, which solve the equation
\[
\alpha z^3+\beta w^3+\gamma z^2w^2=0
\]in this neighborhood, then, comparing the coefficients in the Taylor series, it follows easily that $\alpha(0)=\beta(0)=\gamma(0)=0$.
With continuous functions however, this equation can be solved  with $\gamma(0)\not=0$. For example,
\[
\frac{\overline z w^{2}}{\vert z\vert^{2}+\vert w\vert^2}z^3+\frac{\overline w z^{2}}{\vert z\vert^2+\vert w\vert^2}w^3=z^2w^2.
\] We use a $\Cal C^\ell$-version of this.
\begin{leer}\label{2.9.16'''} Let
$\B^2$ the open unit ball in $\C^2$, $\ell\in \N$,
\begin{align*}&A=\begin{pmatrix} z^{2+\ell}w^{2+\ell}&z^{3+\ell}\\w^{3+\ell}&0\end{pmatrix},\qquad B=\begin{pmatrix}, 0&z^{3+\ell}\\w^{3+\ell}&z^{2+\ell}w^{2+\ell}\end{pmatrix},\\
&c^{}_z=\frac{\overline z w^{2+\ell}}{\vert z\vert^{2}+\vert w\vert^2},\quad
c_w^{}=\frac{\overline w z^{2+\ell}}{\vert z\vert^2+\vert w\vert^2},\quad S=\begin{pmatrix} 1&c_w\\-c_z&1\end{pmatrix}.
\end{align*}
Then it is again easy to see  that
\begin{equation}\label{5.1.16-}
c^{}_zz^{3+\ell}+c^{}_ww^{3+\ell}=z^{2+\ell}w^{2+\ell}\quad\text{on}\quad\B^2,
\end{equation}and, comparing the coefficients of the Taylor series\footnote{Below we explain this in detail in the case of Lemmas \ref{6.1.16} and \ref{6.1.16'}, each of which  is stronger than Lemma \ref{5.1.16}.}, we get
\end{leer}
\begin{lem}\label{5.1.16}
Suppose $\alpha$, $\beta$, $\gamma$ are holomorphic functions in a neighborhood  of the origin in $\C^2$ such that
\begin{equation*}
\alpha z^{\ell+3}+\beta w^{\ell+3}+\gamma z^{\ell+2}w^{\ell+2}=0
\end{equation*} in this neighborhood. Then $\alpha(0)=\beta(0)=\gamma(0)=0$.
\end{lem}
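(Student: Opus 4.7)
The plan is a direct Taylor-coefficient comparison. Expanding $\alpha,\beta,\gamma$ in power series in a common polydisk about the origin,
\[
\alpha=\sum_{a,b\ge 0}\alpha_{ab} z^a w^b,\quad \beta=\sum_{a,b\ge 0}\beta_{ab} z^a w^b,\quad \gamma=\sum_{a,b\ge 0}\gamma_{ab} z^a w^b,
\]
the hypothesis becomes an identity of formal power series in $z,w$, so it suffices to read off three specific coefficients. The useful support information is:
every monomial $z^a w^b$ occurring in $\alpha z^{\ell+3}$ has $a\ge \ell+3$; every monomial occurring in $\beta w^{\ell+3}$ has $b\ge \ell+3$; and every monomial occurring in $\gamma z^{\ell+2}w^{\ell+2}$ has both $a\ge \ell+2$ and $b\ge \ell+2$.

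Using these constraints, I would pick off three monomials from the left-hand side of the identity. First, the monomial $z^{\ell+2} w^{\ell+2}$ cannot be contributed by either $\alpha z^{\ell+3}$ (which requires $a\ge \ell+3$) or $\beta w^{\ell+3}$ (which requires $b\ge \ell+3$), so its coefficient in the sum equals $\gamma_{00}=\gamma(0)$; hence $\gamma(0)=0$. Next, the pure monomial $z^{\ell+3}=z^{\ell+3}w^0$ can only come from $\alpha z^{\ell+3}$ (the other two terms require $b\ge \ell+3$ respectively $b\ge \ell+2$), giving $\alpha_{00}=\alpha(0)=0$. The symmetric argument applied to $w^{\ell+3}$ yields $\beta(0)=0$.

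There is no genuine obstacle here: the statement is just a bookkeeping observation on which monomials the three summands can reach, and the paper's footnote signals precisely that the routine details are postponed to the stronger Lemmas \ref{6.1.16} and \ref{6.1.16'}. The only point requiring a moment's care is the verification that each of the three chosen monomials $z^{\ell+2}w^{\ell+2}$, $z^{\ell+3}$, $w^{\ell+3}$ is indeed reached by exactly one of the three products on the left-hand side, which is immediate from the support constraints above.
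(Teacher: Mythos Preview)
Your argument is correct and is exactly the Taylor-coefficient comparison the paper indicates; the three monomials $z^{\ell+3}$, $w^{\ell+3}$, and $z^{\ell+2}w^{\ell+2}$ are each hit by precisely one of the three summands, yielding $\alpha(0)=\beta(0)=\gamma(0)=0$. The paper does not spell out a separate proof for this lemma beyond ``comparing the coefficients of the Taylor series,'' deferring the more delicate bookkeeping to the stronger Lemmas~\ref{6.1.16} and~\ref{6.1.16'} where the equation holds only on a curve.
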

Also it is easy to see that the functions
 $c_z$ and $c_w$ are of class $\Cal C^\ell$ on $\C^2$ and that $\vert c_zc_w\vert<1$ on $\B^2$. Hence
$S$ is of class  $\Cal C^\ell$  on $\C^2$, and $S(\zeta)\in \mathrm{GL}(n,\C)$ for all $\zeta\in\B^2$. Moreover,
\begin{align*}
&AS=\begin{pmatrix} z^{2+\ell}w^{2+\ell}-c_zz^{3+\ell}&c_wz^{\ell+2}w^{\ell+2}+z^{3+\ell}\\w^{3+\ell}&c_w^{}w^{3+\ell}\end{pmatrix},\\
&SB=\begin{pmatrix} c_w^{}w^{3+\ell}&z^{3+\ell}+c_wz^{2+\ell}w^{2+\ell}\\w^{3+\ell}&-c_zz^{3+\ell}+z^{2+\ell}w^{2+\ell}\end{pmatrix},
\end{align*} which implies by \eqref{5.1.16-} that
\begin{equation}\label{2.9.16''}SBS^{-1}=A\quad\text{on}\quad \B^2.
\end{equation}Hence $A$ and $B$ are globally $\Cal C^\ell$ similar on $\B^2$. On the other hand, we have
\begin{lem}\label{5.1.16*} Let $U$ be an open neighborhood of the origin in $\C^2$, and $H:U\to \mathrm{Mat}(2\times 2,\C)$ holomorphic. Then:

{\em (i)} If, on $U$, $AH=HB$  or $HA=BH$, then $H(0)=0$.

{\em (ii)} If, on $U$, $AH=HA$ or $AB=BA$, then  $H(0)=\lambda I_2$ for some $\lambda\in \C$.
\end{lem}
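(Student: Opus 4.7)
My plan is to write $H=\begin{pmatrix} a&b\\c&d\end{pmatrix}$ as a matrix of germs in $\Cal O_{\C^2,0}$, expand each matrix identity entry by entry, and read off the values $a(0),b(0),c(0),d(0)$ either from Lemma \ref{5.1.16} (for (i)) or from unique factorization in $\Cal O_{\C^2,0}$ together with the fact that $z,w$ are coprime irreducibles (for (ii)). I read the second clause of (ii) as a typo for $HB=BH$, since $AB=BA$ as printed is independent of $H$.

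For (i), assume $AH=HB$. Multiplying out, the $(2,1)$-entry reads $w^{3+\ell}a=w^{3+\ell}d$, so $a=d$ (the factor $w^{3+\ell}$ is not a zero-divisor in $\Cal O_{\C^2,0}$). The $(1,1)$-entry then rearranges to
\[
c\,z^{3+\ell}-b\,w^{3+\ell}+a\,z^{2+\ell}w^{2+\ell}=0,
\]
which has exactly the form treated by Lemma \ref{5.1.16} (with $\alpha=c$, $\beta=-b$, $\gamma=a$). That lemma yields $a(0)=b(0)=c(0)=0$, and since $a=d$ also $d(0)=0$, so $H(0)=0$. The case $HA=BH$ is handled by the same bookkeeping: one entry forces $a=d$ and another reduces to an equation of the type covered by Lemma \ref{5.1.16}.

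For (ii), assume $AH=HA$. The $(1,1)$-entries (equivalently the $(2,2)$-entries) yield $b\,w^{3+\ell}=c\,z^{3+\ell}$. Since $\Cal O_{\C^2,0}$ is a UFD and $z,w$ are coprime, there is a germ $\widetilde h$ with $b=z^{3+\ell}\widetilde h$ and $c=w^{3+\ell}\widetilde h$; in particular $b(0)=c(0)=0$. The $(2,1)$-entry then gives $w^{3+\ell}(a-d)=c\,z^{2+\ell}w^{2+\ell}$, and substituting the expression for $c$ and cancelling the nonzero-divisor $w^{3+\ell}$ produces $a-d=z^{2+\ell}w^{2+\ell}\widetilde h$, so $a(0)=d(0)$. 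Hence $H(0)=a(0)I_2$. The case $HB=BH$ is analogous: the $(1,1)$-entries again give the relation $b\,w^{3+\ell}=c\,z^{3+\ell}$, while the $(1,2)$- or $(2,1)$-entry yields $a(0)=d(0)$ after the same UFD argument.

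There is no serious obstacle here beyond the combinatorics of matching the right scalar entry to the right tool: (i) is nothing more than selecting the entry whose shape fits Lemma \ref{5.1.16}, and (ii) is nothing more than observing that the key off-diagonal equation is an equality of monomial multiples of $\widetilde h$ in a UFD. The content of the lemma is that the very special vanishing orders of the entries of $A$ and $B$ prevent any holomorphic solution $H$ of these intertwining equations from carrying nontrivial information at the origin, beyond a scalar multiple of the identity in the commutant cases.
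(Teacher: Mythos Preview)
Your proof is correct, and you are right to read the second clause of (ii) as $HB=BH$. For part (i) your route is essentially the paper's: both reduce the $(1,1)$-entry of $AH=HB$ to an identity of the shape handled by Lemma \ref{5.1.16}. The only cosmetic difference is that you first extract $a=d$ from the $(2,1)$-entry and then apply Lemma \ref{5.1.16} once, whereas the paper writes the chain $az^{2+\ell}w^{2+\ell}+cz^{3+\ell}=bw^{3+\ell}=cz^{3+\ell}+dz^{2+\ell}w^{2+\ell}$ coming from the $(1,1)$- and $(2,2)$-entries and applies Lemma \ref{5.1.16} to each half.

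For part (ii) there is a genuine, if small, difference. The paper treats the commutant cases uniformly by again invoking Lemma \ref{5.1.16}: from $AH=HA$ one reads off $cz^{3+\ell}=bw^{3+\ell}$ and $(a-d)z^{3+\ell}=bz^{2+\ell}w^{2+\ell}$, and Lemma \ref{5.1.16} (with one of the three coefficients zero) kills $b(0),c(0)$ and $a(0)-d(0)$. You instead use that $\Cal O_{\C^2,0}$ is a UFD with $z,w$ coprime to factor $b=z^{3+\ell}\widetilde h$, $c=w^{3+\ell}\widetilde h$, and then substitute to get $a-d=z^{2+\ell}w^{2+\ell}\widetilde h$. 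Your argument is more self-contained for (ii) since it bypasses the Taylor-coefficient lemma entirely; the paper's is more uniform since the same lemma handles all four cases. Either way the content is the same.
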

\begin{proof} Let $H=\big(\begin{smallmatrix} a&b\\c&d\end{smallmatrix}\big)$. Then
\begin{align}
&\label{3.9.16}AH=\begin{pmatrix} a z^{2+\ell}w^{2+\ell}+c z^{3+\ell}&b z^{2+\ell} w^{2+\ell}+d z^{3+\ell}\\a w^{3+\ell}&b w^{3+\ell}\end{pmatrix},\\
&\label{3.9.16'}HB=\begin{pmatrix} b w^{3+\ell}&a z^{3+\ell}+b z^{2+\ell}w^{2+\ell}\\d w^{3+\ell}&c z^{3+\ell}+dz^{2+\ell} w^{2+\ell}\end{pmatrix},\\
&\label{3.9.16''}HA=\begin{pmatrix} a z^{2+\ell}w^{2+\ell}+b w^{3+\ell}&a z^{3+\ell}\\c z^{2+\ell}w^{2+\ell}+dw^{3+\ell}&cz^{3+\ell}\end{pmatrix},\\
&\label{3.9.16'''}BH=\begin{pmatrix} c z^{3+\ell}&d z^{3+\ell}\\aw^{3+\ell}+cz^{2+\ell}w^{2+\ell}&bw^{3+\ell}+dz^{2+\ell}w^{2+\ell}\end{pmatrix}.
\end{align}  This in particular shows:
\begin{align*}
&\text{if }AH=HB,\text{ then }az^{2+\ell}w^{2+\ell}+c z^{3+\ell}= b w^{3+\ell}=c z^{3+\ell}+d z^{2+\ell} w^{2+\ell},\\
& \text{if }HA=BH,\text{ then }a z^{2+\ell}w^{2+\ell}+b w^{3+\ell}=cz^{3+\ell}=bw^{3+\ell}+dz^{2+\ell}w^{2+\ell},\\
&\text{if }AH=HA, \text{ then }cz^{3+\ell}=bw^{3+\ell} \text{ and }(a-d)z^{3+\ell}=bz^{2+\ell}w^{2+\ell},\\
&\text{if }BH=HB, \text{ then }bw^{3+\ell}=cz^{3+\ell} \text{ and }(d-a)w^{3+\ell}=cz^{2+\ell}w^{2+\ell}.
\end{align*} By Lemma \ref{5.1.16}, this implies
\begin{align*}
&\text{if }AH=HB\text{ or }HA=BH,\text{ then } a(0)=b(0)=c(0)=d(0)=0,\\
&\text{if }AH=HA \text{ or }BH=HB, \text{ then }b(0)=c(0)=0\text{ and }a(0)=d(0).
\end{align*}
\end{proof}

Lemma  \ref{5.1.16*} (i) in particular says that $A$ and $B$ are not locally holomorphically similar at $0$. At the end of this section we prove the following stronger
\begin{thm}\label{6.1.16''} Suppose {\em (a)} $X=\{z^p=w^q\}$, where $p,q\in\N$ such that $\ell+2< q<p$ and $p$, $q$ are  relatively prime,
or {\em (b)} $X$ is the union of  $2\ell+5$ pairwise different one-dimensional linear
subspaces of $\C^2$.

Then the restrictions $A\big\vert_X$ and $B\big\vert_X$ are not locally holomorphically similar at $0$.
\end{thm}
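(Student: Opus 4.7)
The plan is to show that any hypothetical local holomorphic similarity $H:U\cap X\to\mathrm{GL}(2,\C)$ with $AH=HB$ must satisfy $H(0)\notin\mathrm{GL}(2,\C)$, giving a contradiction. Writing $H=\bigl(\begin{smallmatrix}a & b\\ c & d\end{smallmatrix}\bigr)$ and using the explicit entries of $AH$ and $HB$ computed in \eqref{3.9.16}--\eqref{3.9.16'}, the matrix equation $AH=HB$ on $X\cap U$ is equivalent to the three scalar relations
\[
(a-d)z^{\ell+3}=0,\qquad (a-d)w^{\ell+3}=0,\qquad a\,z^{\ell+2}w^{\ell+2}+c\,z^{\ell+3}-b\,w^{\ell+3}=0\quad(\star)
\]
on $X\cap U$. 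My first step is to derive $a=d$ on $X$: in case (a), $\Cal O_{X,0}=\C\{z,w\}/(z^p-w^q)$ is an integral domain since $z^p-w^q$ is irreducible when $\gcd(p,q)=1$, and $z^{\ell+3}$ is a non-zero germ there; in case (b), each linear component of $X$ admits either $z$ or $w$ as a local coordinate, so $a-d\equiv 0$ on each component and hence on $X$. Then $d(0)=a(0)$, and it suffices to prove $a(0)=c(0)=0$, which makes $H(0)=\bigl(\begin{smallmatrix}0 & b(0)\\ 0 & 0\end{smallmatrix}\bigr)$ non-invertible.

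For case (a), I would pull $(\star)$ back along the normalisation $\pi(t)=(t^q,t^p)$. The images $\hat a,\hat b,\hat c$ of $a,b,c$ lie in the subring $R\subseteq\C\{t\}$ of convergent power series whose support is contained in $\{0\}\cup\langle p,q\rangle$, and $(\star)$ becomes the identity
\[
\hat a(t)\,t^{A}+\hat c(t)\,t^{B}-\hat b(t)\,t^{C}=0\quad\text{in }\C\{t\},\qquad A=(p+q)(\ell+2),\ B=q(\ell+3),\ C=p(\ell+3).
\]
Reading off $[t^B]$ yields $c(0)=0$ immediately, because both $B-A$ and $B-C$ are negative. Reading off $[t^A]$ yields $a(0)=-[t^{A-B}]\hat c+[t^{A-C}]\hat b$, and the main technical step, and the one I expect to be the principal obstacle, is a number-theoretic check: under $\gcd(p,q)=1$ and $q>\ell+2$, neither of the exponents $A-B=p(\ell+2)-q$ and $A-C=q(\ell+2)-p$ lies in $\{0\}\cup\langle p,q\rangle$ (if $A-C<0$ the coefficient $[t^{A-C}]\hat b$ vanishes automatically, and the boundary case $A=C$ is ruled out by coprimality together with $q\geq\ell+3\geq 3$). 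A typical verification: if $p(\ell+2)-q=\alpha p+\beta q$ with $\alpha,\beta\geq 0$, then $p(\ell+2-\alpha)=(\beta+1)q$; coprimality forces $\beta+1\geq p$, and the resulting inequality $\alpha\leq\ell+2-q<0$ is a contradiction. Once these memberships are ruled out, $\hat b,\hat c\in R$ forces the two offending coefficients to vanish, and $a(0)=0$ follows.

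For case (b), let the irreducible components of $X$ correspond to pairwise distinct projective directions $[x_k:y_k]\in\mathbb{P}^1$, $k=1,\dots,2\ell+5$. Substituting the parametrisation $s\mapsto(sx_k,sy_k)$ into $(\star)$ and cancelling the common factor $s^{\ell+3}$ yields, for each $k$ and all small $s$,
\[
x_k^{\ell+2}y_k^{\ell+2}\,s^{\ell+1}\,a(sx_k,sy_k)+x_k^{\ell+3}\,c(sx_k,sy_k)-y_k^{\ell+3}\,b(sx_k,sy_k)=0.
\]
For each fixed $N$, extracting the coefficient of $s^N$ produces a homogeneous polynomial in $(x_k,y_k)$ that vanishes at all $2\ell+5$ distinct projective directions; as soon as its total degree is at most $2\ell+4$, it is therefore identically zero. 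Taking $N=0$ gives the degree-$(\ell+3)$ polynomial $c(0)\,x^{\ell+3}-b(0)\,y^{\ell+3}\equiv 0$, so $b(0)=c(0)=0$; taking $N=\ell+1$ gives a homogeneous polynomial of total degree $2\ell+4$ whose $x^{\ell+2}y^{\ell+2}$-coefficient receives a contribution only from the $a(0)$-term (the $c$- and $b$-terms carry an additional factor of $x^{\ell+3}$ or $y^{\ell+3}$), whence $a(0)=0$. Thus in both cases $a(0)=c(0)=d(0)=0$, so $H(0)\notin\mathrm{GL}(2,\C)$, the desired contradiction.
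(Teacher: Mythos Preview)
Your proof is correct and follows the same overall strategy as the paper: assume a local holomorphic similarity $H$ exists, read off the scalar identities coming from $AH=HB$, and deduce that $H(0)$ is singular. The paper packages the algebraic core as two standalone lemmas (Lemmas~\ref{6.1.16} and~\ref{6.1.16'}) asserting that $\alpha z^{\ell+3}+\beta w^{\ell+3}+\gamma z^{\ell+2}w^{\ell+2}=0$ on $X$ forces $\alpha(0)=\beta(0)=\gamma(0)=0$, and then applies them to the $(1,1)$ and $(2,2)$ entries of $AH=HB$ to obtain $a(0)=b(0)=c(0)=d(0)=0$; you instead first extract $a=d$ from the off-diagonal entries and then only need $a(0)=c(0)=0$, which you establish by essentially the same coefficient-reading after pulling back along the normalisation (case~(a)) or along the line parametrisations (case~(b)).

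Two small points worth recording. In case~(a) your numerical-semigroup formulation (``$A-B,\,A-C\notin\{0\}\cup\langle p,q\rangle$'') is a clean conceptual repackaging of the paper's six case distinctions $A_\beta,A_\gamma,B_\alpha,B_\gamma,C_\alpha,C_\beta$; the underlying divisibility arguments are identical. In case~(b) your ``homogeneous polynomial of degree $\le 2\ell+4$ vanishing at $2\ell+5$ distinct points of $\mathbb P^1$ is zero'' is exactly the Vandermonde argument the paper uses at the end of Lemma~\ref{6.1.16'}, but your formulation has the mild advantage that it treats all $2\ell+5$ linear subspaces uniformly, whereas the paper's lemma is stated only for lines of the form $\{w=t_jz\}$.
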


\begin{lem}\label{6.1.16} Let $X=\{z^p=w^q\}$, where $p,q\in\N$ such that $\ell+2< q<p$ and $p$, $q$ are  relatively prime.
Suppose $U$ is a neighborhood of the origin in $\C^2$, and  $\alpha,\beta,\gamma:U\to \C$ are holomorphic such that
\begin{equation}\label{29.11.15'neu}
\alpha z^{\ell+3}+\beta w^{\ell+3}+\gamma z^{\ell+2}w^{\ell+2}=0\quad\text{on}\quad X\cap U.
\end{equation} Then $\alpha(0)=\beta(0)=\gamma(0)=0$.
\end{lem}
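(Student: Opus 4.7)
The plan is to pull the identity back via the normalization of $X$ and reduce the whole claim to a formal power series identity in a single variable $t$, then isolate the three desired values as coefficients of specific powers of $t$. Since $\gcd(p,q)=1$, the curve $X$ is irreducible at the origin and the map $\varphi: t \mapsto (t^q, t^p)$ is a holomorphic bijection from a small disk onto a neighborhood of $0$ in $X$. Pulling \eqref{29.11.15'neu} back by $\varphi$ turns it into the formal identity
\[
\widetilde\alpha(t)\, t^{q(\ell+3)} + \widetilde\beta(t)\, t^{p(\ell+3)} + \widetilde\gamma(t)\, t^{(p+q)(\ell+2)} = 0
\]
in $\mathbb{C}\{t\}$, where $\widetilde\alpha = \alpha\circ\varphi$, and similarly for $\beta,\gamma$. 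If $\alpha(z,w) = \sum_{i,j\ge 0} a_{ij} z^i w^j$, then $\widetilde\alpha(t) = \sum a_{ij} t^{iq+jp}$, so its Taylor support lies in the numerical semigroup $\Lambda := \{iq+jp : i,j\ge 0\} = \langle p,q\rangle$; and the constant terms satisfy $a_{00} = \alpha(0)$, $b_{00} = \beta(0)$, $c_{00} = \gamma(0)$.

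The next step is to equate to zero the coefficients of the three powers $t^{q(\ell+3)}$, $t^{p(\ell+3)}$, $t^{(p+q)(\ell+2)}$ in the pulled-back equation. The coefficient at the smallest exponent $q(\ell+3)$ receives a contribution from $\widetilde\alpha$ at index $iq+jp=0$ (giving $a_{00}$), while the analogous indices for $\widetilde\beta$ and $\widetilde\gamma$ are $(q-p)(\ell+3)$ and $q-p(\ell+2)$ respectively; both are strictly negative under $\ell+2 < q < p$, so only $\alpha(0)$ survives and must vanish. For the other two target powers, the possible cross-contributions come from the four shifted exponents
\[
(p-q)(\ell+3), \quad p - q(\ell+2), \quad p(\ell+2) - q, \quad q(\ell+2) - p.
\]

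The main technical point — and where coprimality and the inequality $\ell+2<q<p$ really do their work — is to verify that none of these four integers lies in $\Lambda$. Each case proceeds uniformly: a putative representation $iq + jp = N$ with $i,j\ge 0$ is rewritten as $p\cdot(\text{something}) = q\cdot(\text{something})$, and $\gcd(p,q)=1$ forces $q$ (or $p$) to divide a small integer of absolute value at most $\ell+3$ or similar; the inequality $q > \ell+2$ together with nonnegativity of $i,j$ then forces both to be negative, a contradiction. Once these four gap-assertions are in hand, the coefficients of $t^{p(\ell+3)}$ and $t^{(p+q)(\ell+2)}$ reduce cleanly to $b_{00}=\beta(0)$ and $c_{00}=\gamma(0)$, both of which must vanish.

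The main obstacle I expect is precisely this number-theoretic bookkeeping — confirming that each of the four shifted exponents falls in a gap of $\langle p,q\rangle$ — because the algebra depends delicately on how close $q$ is to $\ell+3$ and how close $p$ is to $q(\ell+2)$. Everything outside of this step is routine: the pullback to the normalization and the identification of Taylor coefficients amount to standard power series manipulation.
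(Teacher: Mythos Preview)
Your approach is essentially the same as the paper's: pull back via the normalization $t\mapsto(t^q,t^p)$ and isolate $\alpha(0)$, $\beta(0)$, $\gamma(0)$ as the coefficients of $t^{q(\ell+3)}$, $t^{p(\ell+3)}$, $t^{(p+q)(\ell+2)}$ by showing the relevant cross-terms vanish. The paper organizes the six possible cross-contributions into sets $A_\beta,A_\gamma,B_\alpha,B_\gamma,C_\alpha,C_\beta$ and argues each is empty; you phrase the same thing as four gap conditions in the numerical semigroup $\Lambda=\langle p,q\rangle$, having already dispatched the two negative shifts.

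There is, however, a genuine gap in precisely the step you flag as delicate --- and the paper's own argument shares it. Your claim that $(p-q)(\ell+3)\notin\Lambda$ fails when $q=\ell+3$: then $(p-q)(\ell+3)=(p-q)q\in\Lambda$ via $i=p-q$, $j=0$. (In the paper's language, $B_\alpha$ contains $(p-\ell-3,0)$.) Worse, the lemma itself is false at that boundary. For $\ell=0$, $q=3$, $p=4$ one has $\ell+2<q<p$ and $\gcd(p,q)=1$; taking $\alpha=-z$, $\beta=1$, $\gamma=0$ gives
\[
\alpha z^{3}+\beta w^{3}+\gamma z^{2}w^{2}=-z^{4}+w^{3}=0\quad\text{on}\quad X=\{z^{4}=w^{3}\},
\]
yet $\beta(0)=1$. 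Your other three gap conditions (and the two negative shifts) do go through under the stated hypothesis $q>\ell+2$; only the shift $(p-q)(\ell+3)$ needs the stronger inequality $q>\ell+3$. With that sharpening of the hypothesis, both your argument and the paper's are complete; as stated, neither can succeed because the statement is false.
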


\begin{proof} Choose $0<\varepsilon<1$ so small that the closed bidisk $\max(\vert z\vert,\vert w\vert)\le \varepsilon$ is contained in $U$, and let
\[
\sum_{j,k=0}^\infty \alpha_{jk}z^j w^k,\quad \sum_{j,k=0}^\infty \beta_{jk}z^j w^k,\quad\sum_{j,k=0}^\infty \gamma_{jk}z^j w^k
\] be the Taylor series  of $\alpha$, $\beta$ and $\gamma$, respectively.  Then, by \eqref{29.11.15'neu},
\[
\sum_{j,k=0}^\infty \alpha_{jk}z^{j+\ell+3} w^k+\sum_{j,k=0}^\infty \beta_{jk}z^j w^{k+\ell+3}+\sum_{j,k=0}^\infty \gamma_{jk}z^{j+\ell+2} w^{k+\ell+2}=0
\]if $z^p=w^q$ and $\max(\vert z\vert,\vert w\vert)< \varepsilon$. With $z=t^q$ and $w=t^p$ for $0\le t< \varepsilon$, this yields
\begin{equation*}
\sum_{j,k=0}^\infty \alpha_{jk}t_{}^{(j+\ell+3)q+kp} + \sum_{j,k=0}^\infty \beta_{jk}t_{}^{jq+(k+\ell+3)p}+\sum_{j,k=0}^\infty \gamma_{jk}t_{}^{(j+\ell+2)q+(k+\ell+2)p}=0
\end{equation*}for all  $0\le t< \varepsilon$. Comparing the coefficients of $t^{(\ell+3)q}$, $t^{(\ell+3)p}$ and $t^{(\ell+2)(p+q)}$, we get
\begin{align*}
\alpha^{}_{00}+\sum_{(j,k)\in A_\beta} \beta_{jk}+\sum_{(j,k)\in A_\gamma}\gamma_{jk}&=0,\\
\sum_{(j,k)\in B_\alpha} \alpha_{jk}+\beta^{}_{00}+\sum_{(j,k)\in B_\gamma}\gamma^{}_{jk}&=0,\\
\sum_{(j,k)\in C_\alpha} \alpha_{jk}+\sum_{(j,k)\in C_\beta}\beta^{}_{jk}+\gamma^{}_{00}&=0,
\end{align*}
where $A_\beta,\ldots, C_\beta$  are the subsets of $\N\times \N$ defined by
\begin{align*}
(j,k)\in A_\beta&\overset{\mathrm{def}}{\Longleftrightarrow}jq+(k+\ell+3)p=(\ell+3)q\Longleftrightarrow(k+\ell+3)p=(\ell+3-j)q,\\
(j,k)\in A_\gamma&\overset{\mathrm{def}}{\Longleftrightarrow}(j+\ell+2)q+(k+\ell+2)p=(\ell+3)q\Longleftrightarrow (k+\ell+2)p=(1-j)q,\\
(j,k)\in B_\alpha&\overset{\mathrm{def}}{\Longleftrightarrow}(j+\ell+3)q+kp=(\ell+3)p\Longleftrightarrow (j+\ell+3)q=(\ell+3-k)p,\\
 (j,k)\in B_\gamma&\overset{\mathrm{def}}{\Longleftrightarrow}(j+\ell+2)q+(k+\ell+2)p =(\ell+3)p\Longleftrightarrow (j+\ell+2)q=(1-k)p,\\
(j,k)\in C_\alpha&\overset{\mathrm{def}}{\Longleftrightarrow}(j+\ell+3)q+kp=(\ell+2)(p+q)\Longleftrightarrow (j+1)q=(\ell+2-k)p,\\
(j,k)\in C_\beta&\overset{\mathrm{def}}{\Longleftrightarrow}jq+(k+\ell+3)p=(\ell+2)(p+q)\Longleftrightarrow (\ell+2-j)q=(k+1)p.
\end{align*}
It is sufficient to prove that $A_\beta=A_\gamma=B_\alpha=B_\gamma=C_\alpha=C_\beta=\emptyset$.

Assume $(k+\ell+3)p=(\ell+3-j)q$. Contrary to  $q<p$, then it follows
\[
p=\frac{\ell+3-j}{k+\ell+3}q\le \frac{\ell+3}{k+\ell+3}\le q.
\]

Assume  $(k+\ell+2)p=(1-j)q$. Contrary to $p>p/2$, then it follows
\[
p=\frac{1-j}{k+\ell+2}q\le \frac{q}{2}<\frac{p}{2}.
\]

Assume $(j+\ell+3)q=(\ell+3-k)p$. Since $p$ and $q$ are relatively prime, this implies that $j+\ell+3=np$, for some integer $n\in \N^*$. $n=1$ is not possible, for this would imply that $p=j+\ell+3\le \ell+3\le q<p$.  $n\ge 2$ is also impossible, as this would imply that
$p\ge \ell+3\ge j+\ell+3\ge 2p$.

Assume $(j+\ell+2)q=(1-k)p$. This implies that $k=0$ and therefore $(j+\ell+2)q=p$, which is not possible, since $p$ and $q$ are relatively prime.

Assume $(j+1)q=(\ell+2-k)p$.  As $p$ and $q$ are relatively prime, this implies that $\ell +2-k$ is positive and can be divided by $q$. In particular, $\ell+2-k\ge q$, which is not possible, for $\ell+2-k<q$.

Assume $(\ell+2-j)q=(k+1)p$. Since $p$ and $q$ are relatively prime, this implies that $\ell+2-j=np$ for some $n\in\N^*$ and, further,  $p>\ell+2\ge\ell+2-j=np\ge p$, contrary to $q<p$.
\end{proof}

\begin{lem}\label{6.1.16'}Let $t_1,\ldots,t_{2\ell+5}$ be pairwise different complex numbers, and
\[
X:=\bigcup_{j=1}^{2\ell+5}\{w=t_j z\}.
\]
Suppose $U$ is a neighborhood of the origin in $\C^2$, and  $\alpha,\beta,\gamma:U\to \C$ are holomorphic such that
\begin{equation}\label{29.11.15'}
\alpha z^{\ell+3}+\beta w^{\ell+3}+\gamma z^{\ell+2}w^{\ell+2}=0\quad\text{on}\quad X\cap U.
\end{equation} Then $\alpha(0)=\beta(0)=\gamma(0)=0$.
\end{lem}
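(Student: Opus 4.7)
\smallskip

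The plan is to imitate the method used in the proof of Lemma~\ref{6.1.16}: restrict the identity \eqref{29.11.15'} to each line $\{w=t_j z\}$, divide by the common factor $z^{\ell+3}$, expand everything as a power series in $z$, and read off the vanishing of $\alpha(0),\beta(0),\gamma(0)$ by comparing coefficients and invoking the fact that a polynomial in $t$ of degree $d$ with more than $d$ distinct roots must vanish identically. The number $2\ell+5$ of lines is chosen precisely so that enough linear forms in $t$ are available to force the coefficients we want.

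Concretely, write the Taylor expansions $\alpha(z,w)=\sum_{j,k}\alpha_{jk}z^jw^k$ and similarly for $\beta$ and $\gamma$, and set
\[
A_m(t)=\sum_{k=0}^{m}\alpha_{m-k,k}t^k,\qquad B_m(t)=\sum_{k=0}^{m}\beta_{m-k,k}t^k,\qquad C_m(t)=\sum_{k=0}^{m}\gamma_{m-k,k}t^k,
\]so that $\alpha(z,tz)=\sum_m A_m(t)z^m$, etc. Substituting $w=t_j z$ into \eqref{29.11.15'} and dividing by $z^{\ell+3}$, the identity on $X\cap U$ becomes the vanishing of the formal power series
\[
\sum_{m\ge 0}\Bigl(A_m(t_j)+t_j^{\ell+3}B_m(t_j)\Bigr)z^m \;+\; t_j^{\ell+2}z^{\ell+1}\sum_{m\ge 0}C_m(t_j)z^m\;=\;0,
\]for each $j=1,\dots,2\ell+5$.

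Reading the coefficient of $z^0$ gives $\alpha_{00}+t_j^{\ell+3}\beta_{00}=0$ for all $j$. Since this is a polynomial in $t$ of degree at most $\ell+3$ with $2\ell+5>\ell+3$ distinct roots, it is identically zero, so $\alpha(0)=\alpha_{00}=0$ and $\beta(0)=\beta_{00}=0$. Reading the coefficient of $z^{\ell+1}$ gives
\[
P(t_j):=A_{\ell+1}(t_j)+t_j^{\ell+3}B_{\ell+1}(t_j)+t_j^{\ell+2}\gamma_{00}=0\quad\text{for }j=1,\dots,2\ell+5.
\]Now $A_{\ell+1}(t)$ contributes monomials of degrees $0,\dots,\ell+1$; the middle term $\gamma_{00}t^{\ell+2}$ has degree $\ell+2$; and $t^{\ell+3}B_{\ell+1}(t)$ contributes degrees $\ell+3,\dots,2\ell+4$. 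These degree ranges are pairwise disjoint, so $P$ has degree at most $2\ell+4$ and the coefficient of $t^{\ell+2}$ in $P$ is exactly $\gamma_{00}$. Since $P$ has $2\ell+5$ distinct roots, $P\equiv 0$, and in particular $\gamma(0)=\gamma_{00}=0$.

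The only mildly subtle point — and the one I would double-check carefully — is the disjointness of the degree ranges contributed by $A_{\ell+1}$, the $\gamma_{00}t^{\ell+2}$ term, and $t^{\ell+3}B_{\ell+1}$, so that the coefficient of $t^{\ell+2}$ in $P$ really equals $\gamma_{00}$ with no cancellation from $\alpha$ or $\beta$. This is what makes the lower bound $2\ell+5$ on the number of lines sharp for this argument and parallels the role of the Newton-polygon/degree count used in the proof of Lemma~\ref{6.1.16}.
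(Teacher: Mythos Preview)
Your proof is correct and follows essentially the same approach as the paper: restrict \eqref{29.11.15'} to each line $w=t_jz$, divide by $z^{\ell+3}$, and compare coefficients of powers of $z$, then use that a polynomial in $t$ of degree at most $2\ell+4$ with $2\ell+5$ distinct roots must vanish. The only difference is organizational: the paper argues by contradiction (assuming one of $\alpha(0),\beta(0),\gamma(0)$ is nonzero and normalizing by it, then invoking the Vandermonde determinant for the $\gamma$ case), whereas you read off all three vanishings directly from the coefficients of $z^0$ and $z^{\ell+1}$---a slightly cleaner packaging of the same idea.
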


\begin{proof}

To prove that $\alpha(0)=0$, we assume that $\alpha(0)\not=0$. Setting $b=\beta/\alpha$ and $c=\gamma/\alpha$, then we get holomorphic functions $b, c$ in a neighorhood $V\subseteq U$ of $0$ such that
\[
z^{\ell+3}=c z^{2+\ell}w^{\ell+2}-b w^{\ell+3}=0\quad \text{on}\quad X\cap V.
\]It follows that, for $1\le j\le 2\ell+ 5$ and all $\zeta$ in some neighborhood  of zero in the complex plane,
\[\zeta^{\ell+3}=c(\zeta,t_j\zeta) \zeta^{2\ell+4}t_j^{\ell+2}-b(\zeta,t_j\zeta)\zeta^{\ell+3}t_j^{\ell+3}
\]and, hence,
\[1=c(\zeta,t_j\zeta) \zeta^{\ell+1}t_j^{\ell+2}-b(\zeta,t_j\zeta)t_j^{\ell+3}.
\]Hence, $1=-t_j^{\ell+3}\beta(0,0)$ for $1\le j\le 2\ell+5$. This implies that $\beta(0,0)\not=0$ and $t_1,\ldots,t_{2\ell+5}$ are
solutions of the equation
\[
t^{\ell+3}=-\frac{1}{\beta(0,0)}.
\]As $2\ell+5>\ell+3$ and the numbers $t_j$ are pairwise different, this is impossible.

Changing the roles of $z$ and $w$, one proves in the same way  that $\beta(0)=0$.

Finally we assume that $\gamma(0)\not=0$. Setting $a=\alpha/\gamma$ and $b=\beta/\gamma$, then we
get holomorphic functions $a,b$ in a neighborhood $V\subseteq U$ of $0$ such that
\[
a z^{\ell+3}+b w^{\ell+3}=z^{\ell+2}w^{\ell+2}\quad\text{on}\quad X\cap V.
\]It follows that, for $1\le j\le 2\ell+ 5$ and all $\zeta$ in some neighborhood $\Omega$ of zero in the complex plane
\[
a(\zeta,t_j\zeta ) \zeta^{\ell+3}+b(\zeta,t_j\zeta) \zeta^{\ell+3}t_j^{\ell+3}=\zeta^{2\ell+4}t_j^{\ell+2}
\]and, hence,
\begin{equation*}\label{8.11.15}
a(\zeta,t_j\zeta) +b(\zeta,t_j\zeta)t_j^{\ell+3} =\zeta^{\ell+1}t_j^{\ell+2}.
\end{equation*} If $\sum a_{\mu\nu}z^\mu w^\nu$ and $\sum b_{\mu\nu}z^\mu w^\nu$ are the Taylor series at the origin
of $a$ and $b$, respectively,
this means that
\[\sum_{\mu,\nu=0}^\infty \Big(a_{\mu\nu}t_j^\nu+b_{\mu\nu} t_j^{\nu+\ell+3}\Big)\zeta^{\mu+\nu}
=\zeta^{\ell+1}t_j^{\ell+2}
\]for all  $1\le j\le 2\ell+5$ and $\zeta\in\Omega$. Comparing the coefficients of $\zeta^{\ell+1}$, this yields
\[
\sum_{\mu+\nu=\ell+1}a_{\mu\nu}t_j^\nu+\sum_{\mu+\nu=\ell+1}b_{\mu\nu}t_j^{\nu+\ell+3}=t_j^{\ell+2},\quad\quad 1\le j\le 2\ell+5.
\]i.e.,
\[
\sum_{\nu=0}^{\ell+1}\alpha^{}_{\ell+1-\nu,\nu}t_j^\nu+\sum_{\nu=\ell+3}^{2\ell+4}\beta^{}_{2\ell+4-\nu,\nu-\ell-3}t_j^{\nu}=t_j^{\ell+2},
\quad\quad 1\le j\le 2\ell+5.
\]
Hence, the system of $2\ell+5$ linear equations in $2\ell+5$ variables
\[
\sum_{\nu=0}^{2\ell+4}t_j^\nu x_\nu=0, \quad\quad 1\le j\le 2\ell+5
\]has a non-trivial solution, namely one with $x_{\ell+2}^{}=-1$. This not possible, as
\[
\det\begin{pmatrix}1&t^{}_1&t_1^2&\ldots &t^{2\ell+4}_1\\&&\ldots&&\\1&t_{2\ell+5}^{}&t_{2\ell+5}^2&\ldots& t_{2\ell+5}^{2\ell+4}\end{pmatrix}
=\prod_{1\le i<j\le 2\ell+5}(t_i-t_j)\not=0.\]
\end{proof}

{\em Proof of Theorem \ref{6.1.16''}.} Assume there exist a neighborhood $U$ of the origin in $\C^2$ and a holomorphic map $H:U\to\mathrm{GL}(2,\C)$ such that $H^{-1}AH=B$ on $X\cap U$. If $H=\big(\begin{smallmatrix} a&b\\c&d\end{smallmatrix}\big)$, then, by \eqref{3.9.16} and \eqref{3.9.16'}, in particular, it follows that
\[
az^{2+\ell}w^{2+\ell}+c z^{3+\ell}= b w^{3+\ell}=c z^{3+\ell}+d z^{2+\ell} w^{2+\ell}\quad\text{on}\quad X\cap U,
\] which implies by Lemmas \ref{6.1.16} and \ref{6.1.16'} that $a(0)=b(0)=c(0)=d(0)=0$, i.e., $H(0)=0$, which contradicts the assumption that $H(0)$ is invertible.
\qed

\section{A global counterexample}\label{global example}

Let $v_1,v_2,v_3$ denote the canonical complex coordinate functions on $\C^3$,  $x_j:=\rea v_j$ and $y_j:=\im v_j$. Set
\begin{align*}h=v_1+&iv_2,\quad h^*=v_1-iv_2,\\
\s^2=\big\{y_1=y_2=y_3=0\big\}&\cap \big\{x_1^2+x_2^2+x_3^2=1\big\},\quad \s^1=\s^2\cap \big\{x_3=0\big\}.
\end{align*}
Then $hh^*= x_1^2+x_2^2=1$ on $\s^1$. Therefore we can find a neighborhood $N(\s^2)$ in $\C^3$ of $\s^2$ and  $\varepsilon>0$   such that
\begin{equation}\label{7.1.16}
\big\vert  hh^*-1\big\vert<\frac{1}{2}\quad\text{on}\quad N(\s^2)\cap \{-2\varepsilon<x_3<2\varepsilon\}.
\end{equation}
Set
\[
\rho=\big(x_1^2+x_2^2+x_3^2-1\big)^3+y_1^2+y_2^2+y_3^2.
\]Then $\s^2=\{\rho=0\}$ and, making $\varepsilon$ smaller, we can achieve that
\[
\s^2_\varepsilon=\{\rho<\varepsilon\}\subseteq N(\s^2).
\]Moreover, we can choose $\varepsilon$ so small that $\rho$ is strictly plurisubharmonic in $\s^2_\varepsilon$. Then
$\s^2_\varepsilon$ is Stein. Set \[U_+=\s^2_\varepsilon\cap\{x_3>-\varepsilon\}\quad\text{and}\quad U_-=\s^2_\varepsilon\cap \{x_3<\varepsilon\}.
\]

\begin{lem}\label{8.1.16} {\em(i)} There exist holomorphic  $a_\pm,b_\pm,c_\pm,d_\pm:U_\pm\to \C$ such that
\begin{align}&\label{8.1.16'}\begin{pmatrix}a_\pm(\zeta)&b_\pm(\zeta)\\c_\pm(\zeta)&d_\pm(\zeta)\end{pmatrix}\in \mathrm{GL}(2,\C)\quad\text{for all}\quad \zeta\in U_\pm,\\&\label{8.1.16''} \begin{pmatrix}h&0\\0&h^*\end{pmatrix}
=\begin{pmatrix}a_+&b_+\\c_+&d_+\end{pmatrix}
\begin{pmatrix}a_-&b_-\\c_-&d_-\end{pmatrix}^{-1}\quad\text{on}\quad U_+\cap U_-.
\end{align}
{\em (ii)} There do not exist continuous functions $f_\pm^{}:U_\pm^{}\to \C^*$ such that
\begin{equation}\label{9.1.16}h=\frac{f_+}{f_-}\quad\text{on}\quad U_+\cap U_-.
\end{equation}
\end{lem}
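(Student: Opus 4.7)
I would handle part (i) via Grauert's Oka principle applied to the Stein manifold $\s^2_\varepsilon$, reducing the holomorphic splitting problem to a topological one, and part (ii) by an elementary winding-number computation on the equator $\s^1$.

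For part (i), I first note that $\s^2_\varepsilon$ is Stein (via the strictly plurisubharmonic function $\rho$), and that the matrix $\Xi := \big(\begin{smallmatrix} h & 0 \\ 0 & h^* \end{smallmatrix}\big)$ belongs to $\mathrm{GL}(2,\C)$ on $U_+\cap U_-\subseteq \{-\varepsilon<x_3<\varepsilon\}$ because $\vert hh^*-1\vert<1/2$ there by \eqref{7.1.16}. So $\Xi$ is a holomorphic $\mathrm{GL}(2,\C)$-cocycle for the open cover $\{U_+,U_-\}$ of $\s^2_\varepsilon$ and determines a holomorphic rank-$2$ vector bundle $E$ on $\s^2_\varepsilon$; producing $H_\pm$ as required is equivalent to trivializing $E$ holomorphically. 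By Grauert's Oka principle for Stein manifolds, this is equivalent to trivializing $E$ as a continuous complex vector bundle. Since $\s^2_\varepsilon$ is homotopy equivalent to $\s^2$ and continuous rank-$2$ complex bundles on $\s^2$ are classified by the first Chern class $c_1\in H^2(\s^2,\Z)\cong \Z$, the question reduces to showing $c_1(E)=0$. But $c_1(E)=c_1(\det E)$ and $\det E$ has transition function $hh^*=v_1^2+v_2^2$, which equals $1$ identically on $\s^1=\s^2\cap\{x_3=0\}$; its winding number around the equator is therefore zero, so $c_1(E)=0$ and $E$ is trivial. Any trivialization provides the required $H_+$ and $H_-$.

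For part (ii), suppose for contradiction that continuous $f_\pm:U_\pm\to\C^*$ satisfy $h=f_+/f_-$ on $U_+\cap U_-$. The closed upper hemisphere $\s^2_+:=\s^2\cap\{x_3\ge 0\}$ is contained in $U_+$ and is homeomorphic to a disk with boundary $\s^1$; since $\s^2_+$ is simply connected, the restriction of $f_+$ to $\s^2_+$ admits a continuous logarithm, and hence the winding number of $f_+\big\vert_{\s^1}$ around $0$ vanishes. Arguing the same way on $\s^2_-:=\s^2\cap\{x_3\le 0\}\subseteq U_-$ shows that $f_-\big\vert_{\s^1}$ also has winding number zero. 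On the other hand $h\big\vert_{\s^1}$ is the standard embedding $(x_1,x_2,0)\mapsto x_1+ix_2$ of $\s^1$ into $\C^*$, of winding number $1$. Restricting $h=f_+/f_-$ to $\s^1$ then gives the contradiction $1=0-0=0$.

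The main obstacle is the setup of part (i): one has to invoke both Grauert's Oka principle on the Stein manifold $\s^2_\varepsilon$ and the topological classification of rank-$2$ complex bundles on $\s^2$ by the first Chern class. Once these are in place, the winding-number calculation $\det\Xi\big\vert_{\s^1}=hh^*\big\vert_{\s^1}=1$ is immediate, and part (ii) becomes a direct topological consequence. A fully explicit algebraic construction of the splitting would be preferable but is awkward, because both $h$ and $h^*$ vanish at both poles of $\s^2$, so neither can be used directly as an entry of an invertible $H_\pm$; the abstract argument via Grauert's principle sidesteps this issue.
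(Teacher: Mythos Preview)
Your argument is correct. Part (ii) is essentially identical to the paper's: both compute the winding number of $h|_{\s^1}=e^{it}$ to be $1$, while the hypothetical $f_\pm$ would have winding number $0$ along $\s^1$ because they extend over the contractible hemispheres.

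For part (i) the strategies diverge, though both invoke Grauert's Oka principle on the Stein manifold $\s^2_\varepsilon$ to reduce to a continuous triviality statement. You then argue abstractly: pull back to $\s^2$, classify rank-$2$ complex bundles by $c_1\in H^2(\s^2,\Z)$, and observe that $\det\Xi|_{\s^1}=hh^*|_{\s^1}\equiv 1$ forces $c_1=0$. The paper instead writes down an explicit continuous extension of the cocycle over $U_+$: with a cutoff $\chi(x_3)$ equal to $1$ for $x_3\le\varepsilon$ and $0$ for $x_3\ge 2\varepsilon$, the map
\[
C_+(\zeta)=\begin{pmatrix}\chi(x_3)h & 1-\chi(x_3)\\ \chi(x_3)-1 & \chi(x_3)h^*\end{pmatrix}
\]
agrees with $\Xi$ on $U_+\cap U_-$ and has nonvanishing determinant on all of $U_+$ (using $\rea(hh^*)>1/2$ where $\chi\ne 0$). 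This directly exhibits the continuous splitting $\Xi=C_+\cdot I^{-1}$ without any bundle classification. So your remark that an explicit construction ``would be preferable but is awkward'' is too pessimistic: the interpolation between $\Xi$ and $\bigl(\begin{smallmatrix}0&1\\-1&0\end{smallmatrix}\bigr)$ neatly handles the vanishing of $h,h^*$ at the poles. Your route trades this trick for heavier topological input (homotopy equivalence $\s^2_\varepsilon\simeq\s^2$ and $c_1$-classification), but it is equally valid and arguably more conceptual.
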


\begin{proof} (i) Since $\s_\varepsilon^2$ is Stein and $\s^2_\varepsilon=U_+\cup U_-$, by Grauert's Oka principle [Satz I]\cite{Gr}, [Theorem 5.3.1 (ii)]\cite{Fc}, it is sufficient to  find a continuous  $C_+:U_+\to \mathrm{GL}(2,\C)$ with
\begin{equation}\label{8.1.16-}
\begin{pmatrix}h&0\\0&h^*\end{pmatrix}=C^{}_+\quad\text{on}\quad U_+\cap U_-,
\end{equation}which can be done as follows: Take a continuous function $\chi:\R\to[0,1]$ such that $\chi(t)=1$ if $t\le \varepsilon$  and $\chi(t)=0$ if $ t\ge 2\varepsilon$, and define
\[
C_+(\zeta)=\begin{pmatrix}\chi\big(x_3(\zeta)\big)h(\zeta)&1-\chi\big(x_3(\zeta)\big)\\\chi\big(x_3(\zeta)\big)-1&\chi\big(x_3(\zeta)\big)h^*(\zeta)
\end{pmatrix}\quad \text{for}\quad\zeta\in U_+.
\]If $\zeta\in U_+\cap U_-$, then $-\varepsilon<x_3(\zeta)<\varepsilon$ and therefore $\chi\big(x_3(\zeta)\big)=1$, which implies \eqref{8.1.16-}.
It remains to prove that $\det C_+(\zeta)\not=0$  for all $\zeta\in U_+$.
If $\zeta\in U_+$ with $x_3(\zeta)<2\varepsilon$, then,  by \eqref{7.1.16}, $\rea\big(h(\zeta)h^*(\zeta)\big)>1/2$, which yields
\[
\rea\det C_+(\zeta)\ge \frac{1}{2}\Big(\chi\big(x_3(\zeta)\big)\Big)^2+\Big(1-\Big(\chi\big(x_3(\zeta)\big)\Big)^2\ge \frac{1}{2}.
\]If  $\zeta\in U_+$ with $x_3(\zeta)\ge 2\varepsilon$, then $\chi\big(x_3(\zeta)\big)=0$ and, hence, $\det C_+(\zeta)=1$.

 (ii)
Assume such functions exist. Then, for $0\le s\le 1$,
 we have  continuous closed curves $\gamma^+_s:[0,2\pi]\to \C^*$ and $\gamma^-_s:[0,2\pi]\to \C^*$, well-defined by
 \begin{align*}
 &\gamma^+_s(t)=f_+^{}\Big(\big(\sqrt{1-s^2}\cos t,\sqrt{1-s^2}\sin t,s\big)\Big),\\
 &\gamma^-_s(t)=f_-^{}\Big(\big(\sqrt{1-s^2}\cos t,\sqrt{1-s^2}\sin t,-s\big)\Big).
 \end{align*}
 Let
 \[
 \mathrm{Ind\,}\gamma_s^\pm:=\frac{1}{2\pi}\int_0^{2\pi}\frac{(\gamma_s^{\pm})'(t)}{\gamma_s^\pm(t)}\,dt
 \] be the winding number of $\gamma_s^\pm$. It is clear that $\mathrm{Ind\,}\gamma_s^\pm$ depends continuously on $s$, and
 it is well known that $\mathrm{Ind\,}\gamma_s^\pm$ is always an integer. Therefore
 \[
 \mathrm{Ind\,}\gamma_{1}^+=\mathrm{Ind\,}\gamma_0^+\quad\text{and}\quad \mathrm{Ind\,}\gamma_{1}^-=\mathrm{Ind\,}(\gamma_0^-).
 \] Since $\gamma^+_1$ and $\gamma_1^-$ are constant, it follows that $\mathrm{Ind\,}(\gamma_0^+)=\mathrm{Ind\,}(\gamma_0^-)=0$ and, hence,
 \begin{equation}\label{9.1.16'}
 \mathrm{Ind\,}\frac{\gamma_0^+}{\gamma_0^-}=0.
 \end{equation}By definition of $h$, $h\big(\cos t,\sin t,0\big)=\cos t+i\sin t=e^{it}$.
 By \eqref{9.1.16} this yields
\[
e^{it}=\frac{f_+\big(\cos t,\sin t,0\big)}{f_-\big(\cos t,\sin t,0\big)}=\frac{\gamma^+_0(t)}{\gamma^-_0(t)},\qquad 0\le t\le 2\pi,
\]which contradicts \eqref{9.1.16'}.
\end{proof}
Now, using also the notations introduced in  Section \ref{2.9.16'''}, we set
\[X=\s^2_\varepsilon\times \B^2,\qquad  X_\pm=U_\pm\times \B^2,\] and, for $(\zeta,\eta)\in X$,
\[\widetilde A(\zeta,\eta)=A(\eta),\quad \widetilde B(\zeta,\eta)=B(\eta),\quad
\widetilde h(\zeta,\eta)=h(\zeta),\quad\widetilde h^*(\zeta,\eta)=h^*(\zeta).\]
Further, let $a_\pm,b_\pm,c_\pm,d_\pm$ be as in Lemma \ref{8.1.16} (i), and define holomorphic maps $\Theta_\pm:X_\pm\to \mathrm{Mat}(4\times 4,\C)$ by the block matrices
\[
\Theta_\pm(\zeta,\eta)=\begin{pmatrix}a_\pm(\zeta)I_2& b_\pm(\zeta)I_2\\ c_\pm(\zeta)I_2&d_\pm(\zeta)I_2\end{pmatrix},\quad (\zeta,\eta)\in X_\pm.
\]Then, by \eqref{8.1.16'} and \eqref{8.1.16''},
$\Theta_\pm(\zeta,\eta)\in \mathrm{GL}(4,\C)$ for all $(\zeta,\eta)\in X_\pm$, and
\begin{equation}\label{10.1.16}
\begin{pmatrix}\widetilde h I_2&0\\0&\widetilde h^*I_2\end{pmatrix}=\Theta^{}_+\Theta_-^{-1}\quad \text{on}\quad X_+\cap X_-.
\end{equation}Since, obviously,
\[
\begin{pmatrix}\widetilde A&0\\0&\widetilde B\end{pmatrix}\begin{pmatrix}\widetilde h I_2&0\\0&\widetilde h^*I_2\end{pmatrix}=\begin{pmatrix}\widetilde h I_2&0\\0&\widetilde h^*I_2\end{pmatrix}\begin{pmatrix}\widetilde A&0\\0&\widetilde B\end{pmatrix}\quad\text{on}\quad X,
\]this implies that
\begin{equation}\label{2.9.16}
\Theta_+^{}\begin{pmatrix}\widetilde A&0\\0&\widetilde B\end{pmatrix}\Theta_+^{-1}=\Theta_-^{}\begin{pmatrix}\widetilde A&0\\0&\widetilde B\end{pmatrix}\Theta_-^{-1}\quad\text{on}\quad X_+\cap X_-.
\end{equation}Let $\Phi:X\to \mathrm{Mat}(4\times 4,\C)$ be defined by the two sides of \eqref{2.9.16}.

\begin{thm}\label{10.1.16'} $\Phi$ and $\big(\begin{smallmatrix}\widetilde A&0\\0&\widetilde B\end{smallmatrix}\big)$ are

{\em (a)} locally holomorphically similar on $X$,

{\em (b)} globally $\Cal C^\ell$ similar on $X$,

{\em(c)} not globally $\Cal C^\infty$ similar on $X$.
\end{thm}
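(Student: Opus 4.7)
Part (a) is essentially tautological: on each $X_\pm$ the holomorphic invertible matrix $\Theta_\pm$ conjugates $\Phi$ and $D := \big(\begin{smallmatrix}\widetilde A & 0 \\ 0 & \widetilde B\end{smallmatrix}\big)$, and since $X = X_+ \cup X_-$ every point of $X$ lies in a chart where $\Phi$ and $D$ are holomorphically similar.

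For (b), the key device is $P := \big(\begin{smallmatrix}I_2 & 0 \\ 0 & \widetilde S\end{smallmatrix}\big)$, a $\Cal C^\ell$ map $X \to \mathrm{GL}(4, \C)$. Because $\widetilde S \widetilde B \widetilde S^{-1} = \widetilde A$, one has $PDP^{-1} = D' := \big(\begin{smallmatrix}\widetilde A & 0 \\ 0 & \widetilde A\end{smallmatrix}\big)$. The block-scalar form $\Theta_\pm = \big(\begin{smallmatrix}a_\pm I_2 & b_\pm I_2 \\ c_\pm I_2 & d_\pm I_2\end{smallmatrix}\big)$ commutes automatically with $D'$ (whose two diagonal blocks agree), so the $\Cal C^\ell$ matrices $M_\pm := P^{-1} \Theta_\pm P$ take values in $\mathrm{GCom}\, D$ on $X_\pm$. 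Since both $P$ and $K$ are block-diagonal with scalar blocks one has $PK = KP$, and hence $M_+ M_-^{-1} = P^{-1} K P = K$; in other words, $K$ is realized as a $\Cal C^\ell$ coboundary inside $\mathrm{GCom}\, D$. Combining this coboundary with the local holomorphic similarities $\Theta_\pm$ glues them into a single global $\Cal C^\ell$ map $H : X \to \mathrm{GL}(4, \C)$ that realizes the desired similarity between $\Phi$ and $D$.

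For (c) suppose for contradiction that a global $\Cal C^\infty$ similarity $H : X \to \mathrm{GL}(4, \C)$ between $\Phi$ and $D$ exists. The associated $\Cal C^\infty$ sections $J_\pm$ of $\mathrm{GCom}\, D$ on $X_\pm$ then trivialize the cocycle $K$. Fix $\zeta_0 \in U_+ \cap U_-$ and apply Spallek's criterion (Theorem~\ref{28.6.16**}(iii), equivalently Lemma~\ref{8.12.15'}) at the point $(\zeta_0, 0) \in X$: it produces a holomorphic germ of a $4 \times 4$ matrix near $(\zeta_0, 0)$ that commutes with $D$ and takes the value $J_\pm(\zeta_0, 0)$ at that point. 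Restricting this germ to the slice $\{\zeta_0\} \times \B^2$ yields a holomorphic $4 \times 4$ matrix near $\eta = 0$ commuting with $\big(\begin{smallmatrix}A & 0 \\ 0 & B\end{smallmatrix}\big)$; its four $2 \times 2$ blocks $X_1, X_2, X_3, X_4$ satisfy $AX_1 = X_1 A$, $AX_2 = X_2 B$, $BX_3 = X_3 A$, $BX_4 = X_4 B$, so Lemma~\ref{5.1.16*} forces $X_2(0) = X_3(0) = 0$ and $X_1(0), X_4(0) \in \C I_2$. It follows that $J_\pm(\zeta, 0) = \mathrm{diag}(\lambda_\pm(\zeta) I_2, \mu_\pm(\zeta) I_2)$ for some $\Cal C^\infty$ maps $\lambda_\pm, \mu_\pm : U_\pm \to \C^*$, and evaluating the coboundary identity $K = J_+^{} J_-^{-1}$ (or its inverse) at $\eta = 0$ expresses $h$ as a quotient of such $\lambda$'s on $U_+ \cap U_-$, directly contradicting Lemma~\ref{8.1.16}(ii).

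The decisive step is clearly (c): its crux is the conversion of a global $\Cal C^\infty$ commutation relation into a pointwise holomorphic constraint at $\eta = 0$ via Spallek's theorem, after which the algebraic rigidity of Lemma~\ref{5.1.16*} takes over. This is exactly the point that cannot be lowered to finite smoothness --- as the constructions of Section~\ref{local examples} already show, the $\Cal C^k$ analogue of Spallek fails for every finite $k$.
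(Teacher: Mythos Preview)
Parts (a) and (b) of your proposal follow the paper's argument essentially verbatim: the map you call $H=M_\pm^{-1}\Theta_\pm$ is exactly the paper's $\Psi=P^{-1}\Theta_\pm^{-1}P\Theta_\pm$, obtained by the same two observations ($\Theta_\pm$ commutes with $D'$, and $P$ commutes with $K$).

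Part (c), however, takes a different route from the paper and has a gap. The paper does \emph{not} argue pointwise via Spallek; instead it invokes Theorem~\ref{24.8.16+} to upgrade the hypothetical $\Cal C^\infty$ similarity to a global \emph{holomorphic} similarity $\Theta$, and then works with the holomorphic sections $\Theta^{-1}\Theta_\pm$ of $\mathrm{GCom}\,D$. Your approach produces only $\Cal C^\infty$ sections $J_\pm$, and the identity you assert, $K=J_+^{}J_-^{-1}$, is not correct: for the natural choice $J_\pm=\Theta_\pm^{-1}H$ (or $H^{-1}\Theta_\pm$) one gets $J_-^{}J_+^{-1}=\Theta_-^{-1}\Theta_+$, which is merely \emph{conjugate} to $K$, not equal to it. Consequently, after applying Lemma~\ref{5.1.16*} at $\eta=0$ you cannot read off $h=\lambda_+/\lambda_-$ directly; you only obtain the eigenvalue relation $h(\zeta)\in\{\lambda_-(\zeta)/\lambda_+(\zeta),\,\mu_-(\zeta)/\mu_+(\zeta)\}$ pointwise.

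This is exactly the situation the paper faces, and the paper resolves it by the identity theorem: one of the closed sets $\{h=\gamma_+\gamma_-^{-1}\}$, $\{h=\varphi_+\varphi_-^{-1}\}$ has nonempty interior in the connected set $U_+\cap U_-$, hence (by holomorphy) equals all of $U_+\cap U_-$, contradicting Lemma~\ref{8.1.16}(ii). That step genuinely needs $\gamma_\pm,\varphi_\pm$ to be holomorphic --- with merely continuous $\lambda_\pm,\mu_\pm$ the two closed sets could interlace and the conclusion fails. So your pointwise use of Spallek is not enough; you really do need the global holomorphic upgrade provided by Theorem~\ref{24.8.16+}, which is precisely the step the paper takes.
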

\begin{proof}The local holomorphic similarity is clear by definition of $\Phi$.

To prove (b), let $S$ be as in Section \ref{2.9.16'''} and $\widetilde S(\zeta,\eta):=S(\eta)$ for $(\zeta,\eta)\in X$.
Since  $a_\pm(\zeta)I_2$, $b_\pm(\zeta)I_2$, $c_\pm(\zeta)I_2$ and $d_\pm(\zeta)I_2$ commute with $A(\eta)$, we have
\begin{equation}\label{10.1.16+}
\begin{pmatrix}\widetilde A&0\\0&\widetilde A\end{pmatrix}\Theta_\pm=\Theta_\pm\begin{pmatrix}\widetilde A&0\\0&\widetilde A\end{pmatrix}\quad\text{on}\quad U_\pm.
\end{equation}Moreover, it is clear that $\widetilde S\widetilde h^*I_2= \widetilde h^*I_2 \widetilde S$ and therefore
\[
\begin{pmatrix}I_2&0\\0&\widetilde S\end{pmatrix}\begin{pmatrix}\widetilde hI_2&0\\0&\widetilde h^*I_2\end{pmatrix}=\begin{pmatrix}\widetilde hI_2&0\\0&\widetilde h^*I_2\end{pmatrix}\begin{pmatrix}I_2&0\\0&\widetilde S\end{pmatrix}\quad\text{on}\quad X,
\]which implies by \eqref{10.1.16} that
\[
\Theta_+^{-1}\begin{pmatrix}I_2&0\\0&\widetilde S\end{pmatrix}\Theta_+^{}=\Theta_-^{-1}\begin{pmatrix}I_2&0\\0&\widetilde S\end{pmatrix}\Theta_-^{}
\quad\text{on}\quad X_+\cap X_-
\]and further
\[
\begin{pmatrix}I_2&0\\0&\widetilde S^{-1}\end{pmatrix}\Theta_+^{-1}\begin{pmatrix}I_2&0\\0&\widetilde S\end{pmatrix}\Theta_+^{}=
\begin{pmatrix}I_2&0\\0&\widetilde S^{-1}\end{pmatrix}\Theta_-^{-1}\begin{pmatrix}I_2&0\\0&\widetilde S\end{pmatrix}\Theta_-^{}
\quad\text{on}\quad X_+\cap X_-.
\]
Let $\Psi:X\to \mathrm{GL}(4,\C)$ be the $\Cal C^\ell$ map defined by the two sides of the last equality. Then, by  \eqref{2.9.16''},
\begin{equation*}
\Psi^{-1}\begin{pmatrix}\widetilde A &0\\0&\widetilde B\end{pmatrix}\Psi=\Theta_\pm^{-1}\begin{pmatrix}I_2&0\\0&\widetilde S^{-1}\end{pmatrix}\Theta_\pm^{}\begin{pmatrix}\widetilde A &0\\0&\widetilde A\end{pmatrix}
\Theta_\pm^{-1}\begin{pmatrix}I_2&0\\0&\widetilde S\end{pmatrix}\Theta_\pm^{}\quad\text{on}\quad X_\pm.
\end{equation*}In view of \eqref{10.1.16+}, this implies that
\[\Psi^{-1}\begin{pmatrix}\widetilde A &0\\0&\widetilde B\end{pmatrix}\Psi=\Theta_\pm^{-1}\begin{pmatrix}I_2&0\\0&\widetilde S^{-1}\end{pmatrix}\begin{pmatrix}\widetilde A &0\\0&\widetilde A\end{pmatrix}\begin{pmatrix}I_2&0\\0&\widetilde S\end{pmatrix}\Theta_\pm^{}\quad\text{on}\quad X_\pm,
\] and further, again by \eqref{2.9.16''},
\[\Psi^{-1}\begin{pmatrix}\widetilde A &0\\0&\widetilde B\end{pmatrix}\Psi=\Theta_\pm^{-1}\begin{pmatrix}\widetilde A &0\\0&\widetilde B\end{pmatrix}
\Theta_\pm^{}\quad\text{on}\quad X_\pm.
\]By definition of $\Phi$, this means that $\Psi^{-1}\big(\begin{smallmatrix}\widetilde A&0\\0&\widetilde A\end{smallmatrix}\big)\Psi=\Phi$ on $X$, which completes the proof of (b).

To prove (c), we assume that  $\Phi$ and $\big(\begin{smallmatrix}\widetilde A&0\\0&\widetilde B\end{smallmatrix}\big)$ are globally $\Cal C^\infty$ similar on $X$. Since $X$ is Stein, then, by Theorem \ref{24.8.16+}, $\Phi$ and $\big(\begin{smallmatrix}\widetilde A&0\\0&\widetilde B\end{smallmatrix}\big)$ are even globally holomorphically  similar on $X$, i.e., we have a holomorphic map $\Theta:X\to \mathrm{GL}(4,\C)$ with
\[
\Theta^{-1}\Phi\Theta=\begin{pmatrix}\widetilde A&0\\0&\widetilde B\end{pmatrix}\quad\text{on}\quad X.
\]By definition of $\Phi$ this means that
\[
\Theta^{-1}_{}\Theta_\pm^{}\begin{pmatrix}\widetilde A&0\\0&\widetilde B\end{pmatrix}\Theta_\pm^{-1}\Theta=\begin{pmatrix}\widetilde A&0\\0&\widetilde B\end{pmatrix}\quad\text{on}\quad X_\pm,
\]i.e.,
\begin{equation*}\label{11.1.16}
\Theta^{-1}_{}\Theta_\pm^{}\begin{pmatrix}\widetilde A&0\\0&\widetilde B\end{pmatrix}=\begin{pmatrix}\widetilde A&0\\0&\widetilde B\end{pmatrix}\Theta^{-1}\Theta_\pm^{}\quad\text{on}\quad X_\pm.
\end{equation*}
If $C_\pm$, $D_\pm$, $E_\pm$, $F_\pm$ are the $2\times 2$ matrices with
\[
\Theta^{-1}_{}\Theta_\pm^{}=\begin{pmatrix}C_\pm&D_\pm\\E_\pm&F_\pm\end{pmatrix},
\]then this means that, on $X_\pm$,
\[C_\pm \widetilde A=\widetilde AC_\pm,\quad F_\pm \widetilde B=\widetilde B F_\pm,\quad E_\pm \widetilde A=\widetilde BE_\pm,\quad D_\pm \widetilde B=\widetilde AD_\pm,
\]i.e., for each fixed $\zeta\in U_\pm$, we have, on $\B^2$,
\begin{align*}&C_\pm(\zeta,\cdot)A= AC_\pm(\zeta,\cdot),\quad F_\pm(\zeta,\cdot) B=B F_\pm(\zeta,\cdot),\\
&E_\pm(\zeta,\cdot) A= BE_\pm(\zeta,\cdot),\quad D_\pm(\zeta,\cdot) B= AD_\pm(\zeta,\cdot).
\end{align*}
By Lemma \ref{5.1.16*} this yields that, for each $\zeta\in U_\pm$, there exist $\gamma_\pm(\zeta), \varphi_\pm(\zeta)\in \C$ with
\begin{equation}\label{3.9.16+}
\Theta_{}^{-1}(\zeta,0)^{}\Theta_{\pm}^{}(\zeta,0)=\begin{pmatrix}\gamma_\pm(\zeta)I_2&0\\0&\varphi_\pm(\zeta)I_2\end{pmatrix}\quad\text{for all}\quad \zeta\in U_\pm.
\end{equation} Since the maps $\Theta^{-1}_{}\Theta_\pm^{}$ are holomorphic and have invertible values on $X_\pm$, the so defined functions $\gamma_\pm$ and $\varphi_\pm$ must be holomorphic and different from zero on $U_\pm$. Moreover, by \eqref{10.1.16}, it follows from the equations \eqref{3.9.16+} that, for $\zeta\in U_+\cap U_-$,
\begin{equation*}
\Theta(\zeta,0)^{-1}_{}\begin{pmatrix}h(\zeta)I_2&0\\0&h^*(\zeta)I_2\end{pmatrix}
\Theta(\zeta,0)=\begin{pmatrix}\gamma^{}_+(\zeta,0)\gamma^{}_-(\zeta,0)^{-1}I_2&0\\0&\varphi^{}_+(\zeta)\varphi_-^{}(\zeta,0)^{-1}I_2
\end{pmatrix}.
\end{equation*}In particular, for each $\zeta\in U_+\cap U_+$, the matrices
\[\begin{pmatrix}h(\zeta)I_2&0\\0&h^*(\zeta)I_2\end{pmatrix}\quad\text{and}\quad
\begin{pmatrix}\gamma^{}_+(\zeta,0)\gamma^{}_-(\zeta,0)^{-1}&0\\0&\varphi^{}_+(\zeta)\varphi_-^{}(\zeta,0)^{-1}
\end{pmatrix}
\] are similar; hence they have the same eigenvalues. In particular,
\begin{equation}\label{13.1.16-}
h(\zeta)\in\Big\{\gamma^{}_+(\zeta,0)\gamma^{}_-(\zeta,0)^{-1},\varphi^{}_+(\zeta)\varphi_-^{}(\zeta,0)^{-1}\Big\}\quad\text{if}\quad \zeta\in U_+\cap U_-.
\end{equation} Consider the open sets
\[V\gamma:=\big\{\zeta\in U_+\cap U_-\,\vert\, h(\zeta)=\gamma^{}_+(\zeta,0)\gamma^{}_-(\zeta,0)^{-1}\big\}
\] and
\[V_\varphi:=\big\{\zeta\in U_+\cap U_-\,\vert\, h(\zeta)=\varphi^{}_+(\zeta,0)\varphi^{}_-(\zeta,0)^{-1}\big\}.
\]Then, by \eqref{13.1.16-}, $V_\gamma\cup V_\varphi= U_+\cap U_-$. Hence, at least one of these sets, say $V_\gamma$, is non-empty. Since the functions $h$ and  $\gamma^{}_+(\cdot,0)\gamma^{}_-(\cdot,0)^{-1}$  both are holomorphic on $U_+\cap U_-$ and $U_+\cap U_-$ is connected, it follows that $h(\zeta)=\gamma^{}_+(\zeta,0)\gamma^{}_-(\zeta,0)^{-1}$ for all $\zeta\in U_+\cap U_-$, which is not possible, by Lemma \ref{8.1.16} (ii).
\end{proof}

\end{document}